\documentclass{amsart}
\usepackage{amsmath, amstext, amsbsy, amssymb}

\hoffset \voffset \oddsidemargin=55pt \evensidemargin=55pt
\topmargin=16pt \headheight=12pt \headsep=25pt
\numberwithin{equation}{section}
\def\beq{\begin{eqnarray}}
\def\eeq{\end{eqnarray}}
\def\beqs{\begin{eqnarray*}}
\def\eeqs{\end{eqnarray*}}

\def\mz{{\mathbb Z}}
\def\mr{{\mathbb R}}

\def\ind{\hbox{\rm ind}}
\def\ord{\hbox{\rm ord}}

\newfont{\df}{eufm10}

\voffset-3em \hoffset-4em \textwidth=150mm \textheight=230mm
\parindent=2em
\parskip=0.5em

\title[Index-conjecture of length four sequences]
{On the index-conjecture of length four minimal zero-sum sequences II}
\thanks{$^\dag$the corresponding author's email: xialimeng@ujs.edu.cn}
\thanks{Supported by the  NNSF of China (Grant No. 11001110, 11271131)}
\author[C.-X. Shen]{Caixia   Shen}
\author[L.-M. Xia]{Li-meng   Xia$^\dag$}

\date{}
\begin{document}
\maketitle
\centerline{Faculty of Science, Jiangsu University}
\centerline{Zhenjiang, 212013, Jiangsu Province, P.R. China}

\def\abstractname{ABSTRACT}
\begin{abstract}
Let $G$ be a finite cyclic group. Every sequence $S$ over $G$ can be written in the form
$S=(n_1g)\cdot...\cdot(n_lg)$ where $g\in G$ and $n_1,\cdots,n_l\in[1,{\hbox{\rm ord}}(g)]$, and the index $\ind S$ of $S$ is defined to be the minimum of $(n_1+\cdots+n_l)/\hbox{\rm ord}(g)$ over all possible $g\in G$ such that $\langle g\rangle=G$. A conjecture says that if $G$ is finite such that $\gcd(|G|,6)=1$, then $\ind(S)=1$ for every minimal zero-sum sequence $S$. In this paper, we prove that the conjecture holds if $S$ is reduced and the (A1) condition is satisfied(see [19]).

\vskip3mm \noindent {\it Key Words}: cyclic group, minimal zero-sum sequence, index of sequences, reduced.

\vskip3mm \noindent {\it 2000 Mathematics Subject Classification:} 11B30, 11B50, 20K01
\end{abstract}

\newtheorem{theo}{Theorem}[section]
\newtheorem{theorem}[theo]{Theorem}
\newtheorem{defi}[theo]{Definition}
\newtheorem{conj}[theo]{Conjecture}
\newtheorem{lemma}[theo]{Lemma}
\newtheorem{coro}[theo]{Corollary}
\newtheorem{proposition}[theo]{Proposition}
\newtheorem{remark}[theo]{Remark}

\setcounter{section}{0}

\section{Introduction}

Throughout the paper, let $G$ be an additively written finite cyclic group of order $|G| = n$. By
a sequence over $G$ we mean a finite sequence of terms from $G$ which is unordered and repetition
of terms is allowed. We view sequences over $G$ as elements of the free abelian monoid $\mathcal{F}(G)$
and use multiplicative notation. Thus a sequence $S$ of length $|S| = k$ is written in the form
$S = (n_1g)\cdot...\cdot(n_kg)$, where $n_1,\cdots,n_k\in{\mathbb N}$ and $g\in G$. We call $S$ a {\it zero-sum sequence} if $\sum^k_{j=1}n_jg = 0$. If $S$ is a zero-sum sequence, but no proper nontrivial subsequence of $S$ has sum zero, then $S$ is called a {\it minimal zero-sum sequence}. Recall that the index of a sequence $S$ over $G$
is defined as follows.

\begin{defi}
For a sequence over $G$
\beqs S=(n_1g)\cdot...\cdot(n_kg), &&\hbox{where}\;1\leq n_1,\cdots,n_k\leq n,\eeqs
the index of $S$ is defined  by $\ind(S)=\min\{\|S\|_g|g\in G \hbox{~with~}\langle g\rangle=G\}$, where
\beq \|S\|_g=\frac{n_1+\cdots+n_k}{\ord(g)}.\eeq
\end{defi}
Clearly, $S$ has sum zero if and only if $\ind(S)$ is an integer.

\begin{conj}
Let $G$ be a finite cyclic group such that $\gcd(|G|,6)=1$. Then every minimal zero-sum sequence $S$ over $G$ of length $|S|=4$ has $\ind(S)=1$.
\end{conj}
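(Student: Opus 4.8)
The plan is to prove $\ind(S)=1$ for every reduced length-four minimal zero-sum sequence $S$ satisfying (A1), by excluding the only alternative value. Fix a generator $g$ and write $S=(x_1g)\cdots(x_4g)$ with $x_i\in[1,n-1]$ and $x_1+\cdots+x_4=sn$ for some $s\in\{1,2,3\}$. Replacing $g$ by $-g$ sends $(x_i)$ to $(n-x_i)$ and $s$ to $4-s$, so $\ind(S)\le\min\{s,4-s\}\le 2$, and if some generator realizes $s=1$ then $\ind(S)=1$. Hence it suffices to show that the situation ``every generator gives normalized sum $2$'' cannot occur. Passing to coefficients, each generator corresponds to a unit $t$ modulo $n$ acting by $x_i\mapsto\langle tx_i\rangle$, where $\langle y\rangle\in[1,n]$ is the least positive residue; writing $\|S\|_t$ for the resulting normalized sum, I must produce a single unit $t$ with $\sum_i\langle tx_i\rangle=n$, using that the reduced form and (A1) hold.

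First I would put $S$ into the canonical shape guaranteed by the reduced hypothesis and (A1): after multiplying by a suitable unit one coefficient becomes $1$, say $x_1=1$, with $\gcd(x_i,n)=1$ and $x_2\le x_3\le x_4$ obeying the size constraints recorded in [19]. I work in the normalization $s=2$, i.e. $x_2+x_3+x_4=2n-1$, since $s=1$ is the desired conclusion and $s=3$ is its mirror image under $-g$. The advantage of $x_1=1$ is the clean identity
\[
\sum_{i=1}^4\langle tx_i\rangle \;=\; t\sum_{i=1}^4 x_i \;-\; n\sum_{i=1}^4\Big\lfloor\frac{tx_i}{n}\Big\rfloor \;=\; 2tn-n\sum_{i=2}^4\Big\lfloor\frac{tx_i}{n}\Big\rfloor,
\]
valid for $t\in[1,n-1]$ with $n\nmid tx_i$ since $\lfloor t/n\rfloor=0$. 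Thus $\|S\|_t=1$ is equivalent to the floor-sum equation $\sum_{i=2}^4\lfloor tx_i/n\rfloor=2t-1$, while the unwanted situation $\|S\|_t=2$ for every unit $t$ is equivalent to $\sum_{i=2}^4\lfloor tx_i/n\rfloor=2t-2$ holding simultaneously for all units $t$.

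The core of the argument is then to exhibit, under (A1), an explicit unit $t$ violating this constant-value relation, i.e. one raising the floor-sum by exactly one unit beyond $2t-2$. I would track how $\sum_{i=2}^4\lfloor tx_i/n\rfloor-2t$ changes along a short, controlled list of multipliers — for instance small multiples and the modular inverses $x_2^{-1},x_3^{-1}$ — using the three-distance behaviour of the fractional parts $\{tx_i/n\}$ to guarantee the jump of size one is attainable. This is exactly where $\gcd(n,6)=1$ enters: it makes $2$ and $3$ (hence the auxiliary multipliers) units modulo $n$, and it rules out the degenerate coefficient patterns that force $n$ even or $3\mid n$, which are the genuine index-two examples. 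Concretely I expect the contrary assumption $\|S\|_t=2$ for all $t$ to force a divisibility $2\mid n$ or $3\mid n$ through the (A1) relations among $x_2,x_3,x_4$, contradicting the hypothesis.

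The main obstacle is the simultaneous control of the four carry terms $\lfloor tx_i/n\rfloor$ across the case analysis dictated by (A1): one must ensure the chosen multiplier is genuinely a unit, that it lands the coefficient sum on $n$ rather than overshooting to $2n$ or $3n$, and that minimality forbids no interference from a proper subsequence sum. I anticipate the bookkeeping splitting into several subcases according to the relative sizes of $x_2,x_3,x_4$ and their residues modulo $6$, with $\gcd(n,6)=1$ closing each branch; the delicate point is to show that at least one admissible $t$ always survives every constraint, and this is precisely where the reduced normalization of [19] does the essential work of restricting the search to a manageable finite set of candidate multipliers.
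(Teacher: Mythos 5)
Your proposal fails at its very first real step, and it fails in precisely the case this paper is about. Under (A1) the set of gcd's is $\{\gcd(x_i,n)\mid i\}=\{p_1,p_2,p_1p_3,p_2p_3\}$, so \emph{no} coefficient $x_i$ is coprime to $n$; and since multiplication by a unit $t$ preserves $\gcd(x_i,n)$, no change of generator can ever make a coefficient equal to $1$. Your normalization ``$x_1=1$, with $\gcd(x_i,n)=1$'' is therefore unattainable (indeed it contradicts (A1) on its face): the situation where some $x_i$ is coprime to $|G|$ is exactly the one already settled in [19], and (A1) is its complement, which is why this paper exists. With $x_1=1$ gone, your key identity $\sum_i\langle tx_i\rangle = 2tn-n\sum_{i\ge2}\lfloor tx_i/n\rfloor$ (which needs $\lfloor tx_1/n\rfloor=0$ for every $t$) collapses, and with it the equivalence between $\ind(S)=1$ and your floor-sum equation. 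The paper's actual normalization keeps a \emph{small but non-unit} first coefficient: $x_1=e$ with $\gcd(e,n)\in\{p_1,p_2\}$, $e+x_2+x_3+x_4=2n$, and then works with $a=n-x_4$, $b=n-x_3$, $c=x_2$ satisfying $e+c=a+b$.

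Even setting the normalization aside, your outline defers the entire mathematical content: ``I would track\dots'', ``I expect\dots'', ``I anticipate\dots'' never produce a single admissible multiplier, and your central claim --- that assuming $\|S\|_t=2$ for all units forces $2\mid n$ or $3\mid n$ --- is asserted, not proved, and is not how $\gcd(n,6)=1$ actually enters. The paper's proof is a long quantitative case analysis: sufficient conditions for $\ind(S)=1$ in terms of integers $m$ coprime to $n$ lying in intervals $[\frac{kn}{c},\frac{kn}{b}]$ with $ma<n$ (Lemmas 2.2--2.3); the bound $s=\lfloor b/a\rfloor\le 9$ (Lemma 2.9); a renumbering procedure forcing $a>4e$ (Lemmas 2.10--2.13); and then a trichotomy (Propositions 2.14--2.16) governed by $\lceil n/c\rceil$ versus $\lceil n/b\rceil$ and the parameter $k_1$, closed case by case with explicit multipliers ($m=9,16,27,\dots$) and hand elimination of exceptional prime triples such as $n=5\cdot13\cdot17$, using $n>1000$. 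None of these bounds or constructions is recoverable from your sketch, so the proposal is both wrongly founded and essentially empty where the work has to happen.
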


The index of a sequence is a crucial invariant in the investigation of (minimal) zero-sum
sequences (resp. of zero-sum free sequences) over cyclic groups. It was first addressed by
Kleitman-Lemke (in the conjecture [9, page 344]), used as a key tool by Geroldinger ([6, page736]), and then investigated by Gao [3] in a systematical way. Since then it has received a great
deal of attention (see for example [1, 2, 4, 7, 10, 11, 12, 13, 14, 15, 16, 17, 18]). A main focus of the investigation of index is to determine minimal zero-sum sequences of index 1. If $S$ is a minimal zero-sum sequence of length $|S|$ such that $|S|\leq3$ or $|S|\geq\lfloor \frac{n}2\rfloor+2$, then $\ind(S)=1$ (see [1, 14, 16]). In contrast to that, it was shown that for each $k$ with $5\leq k\leq \lfloor \frac{n}2\rfloor+1$, there is a minimal zero-sum subsequence $T$ of length $|T| = k$ with $\ind(T)\geq 2$ ([13, 15]) and that the same is true for $k = 4$ and $\gcd(n, 6)\not= 1$ ([13]). The left case leads to the above conjecture.

In [12], it was proved that Conjecture 1.2 holds true if $n$ is a prime power. In [11], it was proved that Conjecture 1.2 holds for $n=p_1^\alpha\cdot p_2^\beta$, $(p_1\not=p_2)$, and at
least one $n_i$ co-prime to $|G|$.

In [19], it was proved that Conjecture 1.2 holds if the sequence $S$ is reduced and at least one $n_i$ co-prime to $|G|$.

By the result of [19], a minimal zero-sum sequence $S=(x_1g)\cdot(x_2g)\cdot(x_3g)\cdot(x_4g)$ over $G$ is reduced then $|G|$ has at most two prime factors or one of the following holds:

(A1) $\{\gcd(x_i,n)|i=1,2,3,4\}=\{p_1,p_2,p_1p_3,p_2p_3\}$;

(A2) $\{\gcd(x_i,n)|i=1,2,3,4\}=\{1,p_1,p_2,p_1p_2\}$;

(A3) $\gcd(x_i,n)=1$ for $i=1,2,3,4$;

(A4) $\gcd(x_1,n)=1,\gcd(x_2,n)=p_1p_2,\gcd(x_3,n)=p_1p_3,\gcd(x_4,n)=p_2p_3$.

In this paper, we give the affirmative proof under assumption (A1), and our main result can be stated by the following theorem:

\begin{theo}
Let $G=\langle g\rangle$ be a  finite cyclic group such that $|G|=p_1p_2p_3$ and $\gcd(n,6)=1$. If $S=(x_1g,x_2g,x_3g, x_4g)$ is a minimal zero-sum  sequence over $G$ such that
\beqs \{\gcd(n,x_i)|i=1,2,3,4\}=\{p_1,p_2,p_1p_3,p_2p_3\}.\eeqs
Then  $\ind(S)=1$.
\end{theo}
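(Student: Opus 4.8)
The plan is to work entirely inside the cyclic group $G$ of order $n=p_1p_2p_3$ with the distinguished generator $g$, and to exploit the very rigid arithmetic forced by the hypothesis $\{\gcd(n,x_i)\}=\{p_1,p_2,p_1p_3,p_2p_3\}$. First I would record what this gcd-pattern means coordinate-by-coordinate under the Chinese Remainder decomposition $G\cong \mz/p_1\mz\times\mz/p_2\mz\times\mz/p_3\mz$. Writing each $x_i$ via its residues, the gcd condition says that $x_1$ is a multiple of $p_1$ (so its first CRT-coordinate is $0$) but is a unit in the other two factors, and similarly for the others; in particular exactly one $x_i$ vanishes modulo $p_3$-free parts in the prescribed way. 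I would set up notation so that, say, $p_1\mid x_1$, $p_2\mid x_2$, $p_1p_3\mid x_3$, $p_2p_3\mid x_4$, and then encode the zero-sum condition $\sum x_i\equiv 0\pmod n$ as three simultaneous congruences modulo $p_1$, $p_2$, $p_3$.

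Next I would translate the goal $\ind(S)=1$ into a concrete existence statement. Since $\ind(S)$ is a positive integer (the sequence is a zero-sum) and the minimality plus length-four constraints bound it, the task reduces to producing a single generator $g'=cg$ with $\gcd(c,n)=1$ such that, writing $x_i c \equiv y_i \pmod n$ with $y_i\in[1,n]$, one has $y_1+y_2+y_3+y_4=n$. The strategy is therefore to search for the right unit $c$: I would parametrize the admissible normalizations by the action of the unit group $(\mz/n\mz)^\times$ on the tuple $(x_1,x_2,x_3,x_4)$ and show that some orbit representative realizes $\|S\|_{g'}=1$. Concretely, I would analyze the three modular sums $\sum_i (x_i c \bmod p_j)$ for $j=1,2,3$ and show they can be simultaneously driven to the minimal possible values compatible with the zero-sum congruences; because each $x_i$ is zero in exactly the prescribed CRT-coordinates, each of these three sums has only a few nonzero contributors, which makes the combinatorics tractable.

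The heart of the argument is the reduction from a global statement over $\mz/n\mz$ to the three prime components, using the fact that $y_1+\cdots+y_4$ is an integer multiple of $n$ lying in $[4,4n]$, so it equals $n$, $2n$, or $3n$; ruling out $2n$ and $3n$ is where the gcd-pattern must be used decisively. I would carry out a case analysis on the CRT-coordinates of the chosen unit $c$ modulo each $p_j$, and for each case compute the forced value of the normalized sum. The key leverage is that conditions (A1) through (A4) from \cite{19} exhaust the reduced sequences not already covered by the two-prime-factor results, so by \cite{19} I may assume the three-prime structure and need only defeat (A1); the minimality of $S$ forbids any proper zero-sum subsequence, which eliminates the degenerate configurations that would otherwise allow index $\geq 2$.

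I expect the main obstacle to be the case where the normalized sum wants to equal $2n$ rather than $n$: the gcd-pattern is symmetric enough that a naive choice of generator can produce a balanced splitting across the three prime components summing to $2n$, and ruling this out requires a careful interplay between the $p_3$-divisibility (which ties $x_3$ and $x_4$ together) and the units appearing modulo $p_1$ and $p_2$. Handling this will likely require showing that any generator achieving index $2$ would force a proper subsequence of $S$ to be zero-sum, contradicting minimality, and the delicate point is verifying this implication uniformly across all residue cases rather than checking finitely many numerical examples.
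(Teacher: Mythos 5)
Your proposal contains a genuine gap, in fact two. First, the central mechanism you propose --- controlling the three residue sums $\sum_i (x_i c \bmod p_j)$ via the Chinese Remainder decomposition --- cannot decide the index. The index is defined through the least positive residues $|cx_i|_n \in [1,n]$, which is an archimedean (size) condition, not a congruence condition: knowing $x_i c$ modulo each $p_j$, even knowing that these residues are ``small,'' tells you essentially nothing about whether $|cx_i|_n$ lies below or above $n/2$, nor about whether $\sum_i |cx_i|_n$ equals $n$ or $2n$. This is precisely why the paper never argues componentwise mod $p_1,p_2,p_3$; instead it normalizes the sequence to $(g)\cdot(cg)\cdot((n-b)g)\cdot((n-a)g)$ with $e+c=a+b$ (Proposition 2.1) and then hunts for explicit multipliers $m$ coprime to $n$ lying in real intervals such as $[\frac{kn}{c},\frac{kn}{b}]$ with $ma<n$, or multipliers $M$ making at least three of the $|Mx_i|_n$ land on the same side of $n/2$ (Lemma 2.2), together with quantitative bounds like $s=\lfloor b/a\rfloor\le 9$ (Lemma 2.9) and a renumbering step (Lemma 2.13) forcing $a>4e$. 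The entire difficulty of the theorem lives in that interval-and-inequality analysis (Sections 3--5, dozens of cases), which your plan replaces with the unsupported assertion that ``the combinatorics is tractable.''

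Second, your proposed way of killing the sum-$2n$ case --- ``any generator achieving index $2$ would force a proper subsequence of $S$ to be zero-sum, contradicting minimality'' --- cannot work as stated, because it is refuted by known examples: for $\gcd(n,6)\ne 1$ there exist minimal zero-sum sequences of length $4$ with index $2$ (cited in the paper's introduction, [13]). Minimality alone therefore never implies index $1$; the implication you need is essentially the conjecture itself, and it must consume the specific hypothesis $\{\gcd(n,x_i)\}=\{p_1,p_2,p_1p_3,p_2p_3\}$ together with $\gcd(n,6)=1$ through hard arithmetic, not through a structural contradiction with minimality. A smaller but telling slip: you say the cases $2n$ and $3n$ must both be ruled out, but the case $\sum_i|cx_i|_n=3n$ is harmless --- replacing $c$ by $n-c$ converts it to the sum-$n$ case, which is exactly the paper's sufficient condition (1) quoted from Remark 2.1 of [11]. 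Only the value $2n$ is the obstruction, and defeating it is what the paper's case analysis is for.
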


It was mentioned in [13] that Conjecture 1.2 was confirmed computationally if $n\leq1000$.
Hence, throughout the paper, we always assume that $n>1000$.

\section{Preliminaries and renumbering the sequence}

Throughout, let G be a cyclic group of order $|G| = n > 1000$. Given real numbers $a, b\in\mr$,
we use $[a, b] = \{x\in\mz|a\leq x\leq b\}$ to denote the set of integers between $a$ and $b$, and similarly, set $[a, b) = \{x\in\mz|a\leq x < b\}$. For $x\in\mz$,
we denote by $|x|_n\in[1, n]$ the integer congruent to $x$ modulo $n$. Suppose that $n$ has a prime
decomposition $n = p^\alpha q^\beta$. Let $S = (x_1g)\cdot...\cdot(x_4g)$ be a minimal zero-sum sequence over $G$
such that $\ord(g) = n = |G|$ and $1\leq x_1, x_2, x_3, x_4\leq n-1$. Then $x_1 + x_2 + x_3 + x_4 = \nu n$, where
$1\leq\nu\leq3$.

Let $S$ be the sequence as described in Theorem 1.3.  Similar to Remark 2.1 of [11], we may always assume that
$x_1=e, e+x_2+x_3+x_4=2n$ and $e<x_2<\frac{n}{2}<x_3\leq x_4<n-e$. Let $c=x_2,b=n-x_3,a=n-x_4$, then it is easy to show that the following proposition implies Theorem 1.3.

\begin{proposition}
Let $n=p_1p_2p_3$, where $p_1, p_2, p_3$ are three different primes, and $\gcd(n,6)=1$. Let $S= (g)\cdot(cg)\cdot((n-b)g)\cdot((n-a)g)$ be a minimal zero-sum sequence over $G$ such that $\ord(g)=|G|=n$, and
\beqs \{\gcd(n,e),\gcd(n,c),\gcd(n,b),\gcd(n,a)\}=\{p_1,p_2,p_1p_3,p_2p_3\},\eeqs
where $e+c=a+b$. Then $\ind(S)=1$.
\end{proposition}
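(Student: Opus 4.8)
The plan is to convert the assertion $\ind(S)=1$ into the existence of a suitable generator. Writing $S$ with the given generator $g$ one has $\|S\|_g=(e+c+(n-b)+(n-a))/n=2$, so $\ind(S)\le 2$; since $S$ is zero-sum, $\ind(S)$ is a positive integer, whence $\ind(S)\in\{1,2\}$. Every generator has the form $tg$ with $\gcd(t,n)=1$, and $\|S\|_{tg}=\frac1n\big(|et|_n+|ct|_n+|(n-b)t|_n+|(n-a)t|_n\big)$. Because none of the four values $\gcd(n,\cdot)$ equals $1$ here, all four coefficients are non-units, so — in contrast with the cases settled in [19] — we cannot normalize any coefficient to $1$; this is the crux of difficulty (A1). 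Noting that $t\mapsto -t$ replaces each $|x_it|_n$ by $n-|x_it|_n$ (legitimate since $x_it\not\equiv 0$) and hence sends a sum $s$ to $4n-s$, solutions with sum $n$ and with sum $3n$ occur in pairs. Thus it suffices to exhibit one unit $t$ with $|et|_n+|ct|_n+|(n-b)t|_n+|(n-a)t|_n=n$.

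Next I would exploit $n=p_1p_2p_3$ together with the gcd data. If $\gcd(x_i,n)=d_i$ and $x_i=d_iy_i$, then $|x_it|_n=d_i\,|y_it|_{n/d_i}$, where $|\cdot|_m$ denotes the least positive residue modulo $m$. In the representative assignment $\gcd(e,n)=p_1,\ \gcd(c,n)=p_2,\ \gcd(b,n)=p_1p_3,\ \gcd(a,n)=p_2p_3$, write $e=p_1e_1,\ c=p_2c_1,\ b=p_1p_3b_1,\ a=p_2p_3a_1$ and set $A=|a_1t|_{p_1},\ B=|b_1t|_{p_2},\ C=|c_1t|_{p_1p_3},\ E=|e_1t|_{p_2p_3}$. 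Using $|(n-b)t|_n=n-p_1p_3B$ and $|(n-a)t|_n=n-p_2p_3A$, the target equation collapses to the clean relation $p_3(p_2A+p_1B-p_1p_2)=p_1E+p_2C$. Here $A,B$ depend only on $t\bmod p_1$ and $t\bmod p_2$, whereas $C,E$ are CRT reconstructions depending on $(t\bmod p_1,t\bmod p_3)$ and $(t\bmod p_2,t\bmod p_3)$. The other distributions of $\{p_1,p_2,p_1p_3,p_2p_3\}$ over $(e,c,b,a)$ are reduced by the symmetry $p_1\leftrightarrow p_2$ and by which of the two sides of $e+c=a+b$ carries the factor $p_3$; reading $e+c=a+b$ modulo each $p_i$ both restricts the admissible assignments and yields congruences such as $c_1\equiv p_3a_1\pmod{p_1}$, $e_1\equiv p_3b_1\pmod{p_2}$, and $p_1e_1\equiv -p_2c_1\pmod{p_3}$, which tie the reduced coefficients together and cut down the free parameters.

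To finish, I would fix $t\bmod p_1$ and $t\bmod p_2$ so that $M:=p_2A+p_1B-p_1p_2$ is a convenient positive integer, and then vary $\tau_3=t\bmod p_3$ over $[1,p_3-1]$, seeking $p_1E+p_2C=p_3M$. With $t\bmod p_1,p_2$ frozen, $C$ runs through an arithmetic progression of common difference $p_1$ in $[1,p_1p_3)$ and $E$ through one of difference $p_2$ in $[1,p_2p_3)$, the two being coupled by $c_1E\equiv e_1C\pmod{p_3}$. I would then argue, by a counting/pigeonhole estimate over the $p_3-1$ admissible values of $\tau_3$ (invoking $n>1000$ to guarantee room), that the single linear equation $p_1E+p_2C=p_3M$ is satisfied for some $\tau_3$, while keeping all three residues nonzero so that $t$ is genuinely a unit.

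I expect the main obstacle to be precisely this last step: since $C$ and $E$ are CRT reconstructions, their integer magnitudes do not vary monotonically or linearly with $\tau_3$, so forcing the exact equality $p_1E+p_2C=p_3M$ — rather than merely an inequality — is delicate and is where the bulk of the estimation will lie. A secondary difficulty is bookkeeping: one must check that the constructed $t$ produces no proper zero-sum subsequence, i.e. that minimality of $S$ is compatible with (and in places forces) the residue choices, and that the several gcd-assignment cases indeed collapse, via the $p_1\leftrightarrow p_2$ symmetry and the relation $e+c=a+b$, to the single representative computation above.
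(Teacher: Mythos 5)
Your CRT reduction is correct as far as it goes, but it gains nothing, and the step you defer is the entire problem. Since $S$ is a zero-sum sequence, $|et|_n+|ct|_n+|(n-b)t|_n+|(n-a)t|_n$ is automatically a multiple of $n$ for \emph{every} unit $t$, so the congruence consistency of your relation $p_3(p_2A+p_1B-p_1p_2)=p_1E+p_2C$ modulo $p_1,p_2,p_3$ is free (modulo $p_3$ both sides reduce to $(e+c)t\equiv(a+b)t\equiv 0$, and similarly mod $p_1$, $p_2$ using $p_1\mid c-a$, $p_2\mid b-e$); the relation is therefore only a restatement of ``the sum equals $n$ rather than $2n$ or $3n$,'' i.e.\ of the inequality $|et|_n+|ct|_n<n$. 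Worse, for fixed $t\bmod p_1p_2$ the integer $p_1E+p_2C$ has \emph{constant} residue modulo $n$ as $\tau_3$ varies (its residues mod $p_1$ and mod $p_2$ are frozen, and mod $p_3$ it is always $0$), so varying $\tau_3$ merely toggles $p_1E+p_2C$ between the two values $p_3M$ and $p_3M+n$. Your proposed ``counting/pigeonhole estimate over the $p_3-1$ admissible values of $\tau_3$'' must show the smaller value is attained at least once, and there is no pigeonhole reason for this: for many choices of $t\bmod p_1p_2$ (e.g.\ whenever $M\le 0$) no admissible $\tau_3$ exists at all, so the three residues must be chosen jointly --- which is exactly the original problem. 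Nothing in your outline engages the data that make the statement true: size information about $e,c,b,a$, the bound $n>1000$, and the structure (A1). (Also, your worry about minimality is misplaced in the opposite direction: multiplication by a unit is an automorphism of $G$, so minimality is preserved automatically; the issue is not to \emph{check} it but to \emph{use} it.)

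For contrast, the paper resolves precisely this difficulty by inequality arguments rather than congruence arguments: sufficient criteria (Lemma 2.2) of the form ``there exist $k\le b$ and $m$ coprime to $n$ with $\frac{kn}{c}\le m\le\frac{kn}{b}$ and $ma<n$,'' or ``at least two of $|Ma|_n>\frac{n}{2}$, $|Mb|_n>\frac{n}{2}$, $|Mc|_n<\frac{n}{2}$ hold''; a proof that $s=\lfloor b/a\rfloor\le 9$ (Lemmas 2.4--2.9); a renumbering step (Lemmas 2.10--2.13), which is where the (A1) prime pattern is genuinely exploited, forcing $a>4e$; and then an exhaustive case analysis (Propositions 2.14--2.16, Sections 3--5) over the finitely many possible positions of $\lceil n/c\rceil$, $\lceil n/b\rceil$ and $k_1$, producing an explicit multiplier $m$ or $M$ in each case. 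Without a quantitative substitute for this machinery --- some counting or equidistribution statement strong enough to force $|et|_n+|ct|_n<n$ for a unit $t$ --- your proposal has a genuine gap at its final step, and that step is not a technicality: it is the theorem.
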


{\noindent \emph{Notice that:}} for convenience, we list two sufficient conditions introduced in Remark 2.1 of [11].

(1) If there exists positive integer $m$ such that $\gcd(n,m)=1$ and $|mx_1|_n+|mx_2|_n+|mx_3|_n+|mx_4|_n=3n$, then $\ind(S)=1$.

(2) If there exists positive integer $m$ such that $\gcd(n,m)=1$ and at most one $|mx_i|_n\in\left[1,\frac{n}{2}\right]$ (or, similarly, at most one $|mx_i|_n\in\left[\frac{n}{2},n\right]$), then $\ind(S)=1$.

\begin{lemma}
Proposition 2.1 holds if one of the following conditions holds:

(1) There exist positive integers $k,m$ such that $\frac{kn}{c}\leq m\leq\frac{kn}{b}$, $\gcd(m,n)=1$, $1\leq k\leq b$, and $ma<n$.

(2) There exists a positive integer $M\in[1,\frac{n}{2e}]$ such that $\gcd(M,n)=1$ and at least two of the following inequalities hold:
$$|Ma|_n>\frac{n}{2}, |Mb|_n>\frac{n}{2}, |Mc|_n<\frac{n}{2}.$$
\end{lemma}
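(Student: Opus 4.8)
The plan is to pass everything through the quantities $|m x_i|_n$. By Definition 1.1, every generator of $G$ has the form $mg$ with $\gcd(m,n)=1$, so
\[
\ind(S)=\frac{1}{n}\min_{\gcd(m,n)=1}\sum_{i=1}^4|m x_i|_n .
\]
Since $\sum_i x_i=e+c+(n-b)+(n-a)=2n$, every such sum is $\equiv 2mn\equiv 0\pmod n$ and is positive, hence a positive multiple of $n$; therefore it suffices to produce one $m$ coprime to $n$ with $\sum_i|m x_i|_n=n$. Throughout I use $x_1=e,\ x_2=c,\ x_3=n-b,\ x_4=n-a$, the identity $e+c=a+b$, and the fact that $\gcd(m,n)=1$ together with each $\gcd(n,x_i)$ being a proper divisor of $n$ forces $ma,mb,mc,me\not\equiv0\pmod n$; thus every residue below lies in $[1,n-1]$ and, exactly, $|m(n-a)|_n=n-|ma|_n$ and $|m(n-b)|_n=n-|mb|_n$.

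For part (1), first $ma<n$ gives $|ma|_n=ma$. Writing $B=\lfloor mb/n\rfloor$, $C=\lfloor mc/n\rfloor$, $E=\lfloor me/n\rfloor$, so that $|mb|_n=mb-nB$, $|mc|_n=mc-nC$, $|me|_n=me-nE$, I substitute into $\sum_i|m x_i|_n=|me|_n+|mc|_n+(n-|mb|_n)+(n-|ma|_n)$. The linear terms collapse because $me+mc-mb-ma=m(e+c-a-b)=0$, leaving the clean identity
\[
\sum_{i=1}^4|m x_i|_n=n\bigl(2+B-C-E\bigr).
\]
The hypotheses now pin down the floors: $m\le kn/b$ with coprimality gives $mb<kn$, hence $B\le k-1$; likewise $m\ge kn/c$ gives $mc>kn$, hence $C\ge k$; and $E\ge0$ trivially. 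Therefore $2+B-C-E\le 2+(k-1)-k=1$, so $\sum_i|m x_i|_n\le n$. Being a positive multiple of $n$, it equals $n$, and $\ind(S)=1$.

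For part (2), the point is merely to read the three displayed inequalities through the substitutions $x_3=n-b$, $x_4=n-a$. Because $M\le n/(2e)$ we have $1\le Me\le n/2$, so $|Mx_1|_n=Me\in[1,n/2]$. Next, $|Mc|_n<n/2$ says $|Mx_2|_n<n/2$; $|Mb|_n>n/2$ gives $|Mx_3|_n=n-|Mb|_n<n/2$; and $|Ma|_n>n/2$ gives $|Mx_4|_n=n-|Ma|_n<n/2$. If at least two of these three hold, then together with $x_1$ at least three of the four residues $|Mx_i|_n$ lie in $[1,n/2]$; as $n$ is odd, no residue equals $n/2$, so at most one lies in $[n/2,n]$. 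This is exactly the second sufficient condition recalled above (parenthetical form), whence $\ind(S)=1$.

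The genuinely delicate step is the identity in part (1): one must verify that all four residues avoid $0$ (guaranteed by $\gcd(m,n)=1$ and by each $\gcd(n,x_i)$ being a proper divisor of $n$), so that $|m(n-a)|_n=n-|ma|_n$ and $|m(n-b)|_n=n-|mb|_n$ hold with no off-by-one error and the floor bookkeeping is valid; once this is secured, the cancellation $me+mc-mb-ma=0$ does all the work, and the inequalities $mb<kn<mc$, $ma<n$ force $\sum_i|m x_i|_n\le n$. Part (2) and the final ``multiple of $n$'' step are then routine. One may alternatively phrase part (1) via the symmetry $m\mapsto -m$ and the first sufficient condition $\sum_i|m x_i|_n=3n$, which is equivalent to what is done here.
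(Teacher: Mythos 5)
Your proof is correct, but there is no in-paper argument to compare it against: the paper states Lemma 2.2 and explicitly defers its proof (together with Lemmas 2.3--2.9) to reference [20], recommending Li and Peng's paper [11]. What you wrote is essentially the standard argument from those sources. For part (1), your bookkeeping identity $\sum_i |mx_i|_n = n(2+B-C-E)$, combined with the observation that coprimality forces the strict inequalities $mb<kn<mc$ (hence $B\le k-1$, $C\ge k$) and that the sum is a positive multiple of $n$, correctly pins the sum to exactly $n$; this matches the computation in [11] (which instead evaluates each residue directly as $|mc|_n=mc-kn$, $|mb|_n=mb-(k-1)n$, $|me|_n=me$ using $e<a$), and your version is even marginally more general since it never uses the hypothesis $k\le b$, which is only needed when the lemma is invoked later. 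For part (2), you do not prove the underlying fact from scratch but correctly reduce it to the paper's quoted sufficient condition (2) (``at most one $|Mx_i|_n\in[\frac{n}{2},n]$''): the bound $M\le\frac{n}{2e}$ gives $|Me|_n=Me<\frac{n}{2}$, and each hypothesized inequality converts, via $|M(n-a)|_n=n-|Ma|_n$ and $|M(n-b)|_n=n-|Mb|_n$ (valid because no residue vanishes), into a further residue below $\frac{n}{2}$. That reliance is legitimate, since the paper introduces the two ``Notice'' conditions precisely as known tools from Remark 2.1 of [11]; just be aware that condition (2) is itself a nontrivial result proved there (it does not follow from the simple $m\mapsto n-m$ symmetry alone), so your part (2) is a reduction to an external input rather than a self-contained proof, exactly as the paper intends.
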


\begin{lemma}
If there exist integers $k$ and $m$ such that $\frac{kn}{c}\leq m\leq\frac{kn}{b}$, $\gcd(m,n)=1$, $1\leq k\leq b$, and $a\leq\frac{b}{k}$, then Proposition 3.1 holds.
\end{lemma}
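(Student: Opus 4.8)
The plan is to derive this lemma as an immediate corollary of Lemma 2.2(1), since the two statements share all of their hypotheses except that Lemma 2.2(1) requires $ma<n$ whereas here we are instead given $a\le \frac{b}{k}$. So the entire content of the proof is to show that, under the standing constraints $\frac{kn}{c}\le m\le\frac{kn}{b}$, $\gcd(m,n)=1$, and $1\le k\le b$, the inequality $a\le\frac{b}{k}$ forces $ma<n$; once this is in hand, all four hypotheses of Lemma 2.2(1) are met and Proposition 2.1 follows.

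First I would record positivity, so that inequalities may be multiplied without reversing. Since $1\le k$ and $c,n>0$, the lower bound $\frac{kn}{c}>0$ already forces $m\ge 1$; moreover $a=n-x_4>0$ because $x_4<n$. Thus $k,m,a$ are all positive integers, and in particular the ``integers $k$ and $m$'' of the hypothesis are automatically positive, matching the requirement in Lemma 2.2(1).

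Next comes the main estimate. From $a\le\frac{b}{k}$ and $k>0$ we obtain $ka\le b$, i.e. $\frac{ka}{b}\le 1$. Combining the upper bound $m\le\frac{kn}{b}$ with $a>0$ gives
\[
ma\;\le\;\frac{kn}{b}\,a\;=\;\frac{ka}{b}\,n\;\le\;n,
\]
so that $ma\le n$. It then remains only to exclude equality, and this is where the coprimality is used: if $ma=n$, then $m\mid n$, and together with $\gcd(m,n)=1$ this forces $m=1$, whence $a=n$, contradicting $a=n-x_4\in[1,n-1]$. Therefore $ma<n$.

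With $ma<n$ established, the hypotheses of Lemma 2.2(1) hold verbatim, and that lemma yields $\ind(S)=1$, i.e. Proposition 2.1 holds. The argument is entirely elementary and I foresee no genuine obstacle; the only point demanding care is the passage from the weak inequality $ma\le n$ to the strict inequality $ma<n$, which is exactly what the assumption $\gcd(m,n)=1$ secures.
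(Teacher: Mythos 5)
Your proof is correct. The paper itself gives no argument for this lemma (it defers to the proof of Lemmas 2.3--2.5 of reference [11]), and your reduction to Lemma 2.2(1) is exactly the intended route: from $a\le\frac{b}{k}$ and $m\le\frac{kn}{b}$ one gets $ma\le\frac{ka}{b}\,n\le n$, and the equality case $ma=n$ is impossible since $\gcd(m,n)=1$ would force $m=1$, $a=n$, contradicting $a<n$ (one could equally note $m=1$ contradicts $\frac{kn}{c}\le m$ since $c<n$). So the only content of the lemma is the passage from $a\le\frac{b}{k}$ to the strict bound $ma<n$, and you have supplied it completely.
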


From now on, we assume that $s=\lfloor\frac{b}{a}\rfloor$. Then we have $1\leq s\leq\frac{b}{a}<s+1$. Since $b\leq\frac{n}{2}$, we have $\frac{n}{2b}=\frac{(2s-t)n}{2b}-\frac{(2s-t-1)n}{2b}>1$, and then $[\frac{(2s-t-1)n}{2b},\frac{(2s-t)n}{2b}]$ contains at least one integer for every $t\in[0,s-1]$.

\begin{lemma}
Suppose that $a>2e$, $s\geq2$ and $[\frac{(2s-2t-1)n}{2b},\frac{(s-t)n}{b}]$ contains an integer co-prime to $n$ for some $t\in[0,\cdots,\lfloor\frac{s}2\rfloor-1]$. Then Proposition 2.1 holds.
\end{lemma}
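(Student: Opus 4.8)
The plan is to apply Lemma 2.2(2) with $M=m$, where $m$ is the integer coprime to $n$ furnished by the hypothesis, lying in $[\frac{(2s-2t-1)n}{2b},\frac{(s-t)n}{b}]$ for the relevant $t$. Writing $k=s-t$, this interval is exactly $[\frac{(2k-1)n}{2b},\frac{kn}{b}]$, and the constraint $t\in[0,\lfloor\frac s2\rfloor-1]$ translates into $k\geq\lceil\frac s2\rceil+1$, hence $2k-1\geq s+1$. Throughout I would use the two defining inequalities of $s=\lfloor\frac ba\rfloor$, namely $sa\leq b<(s+1)a$. The aim is to verify that $m$ satisfies $m\in[1,\frac{n}{2e}]$, $|ma|_n>\frac n2$ and $|mb|_n>\frac n2$; the last two are two of the three inequalities of Lemma 2.2(2), so that lemma then delivers Proposition 2.1. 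A pleasant feature is that \emph{any} $m$ coprime to $n$ in the interval works, so no delicate choice beyond coprimality is needed.

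First I would pin down $|mb|_n$. From $\frac{(2k-1)n}{2b}\leq m\leq\frac{kn}{b}$ we get $mb\in[\frac{(2k-1)n}{2},kn]$; since $\gcd(m,n)=1$ and $n\nmid b$ we have $mb<kn$, so $\lfloor mb/n\rfloor=k-1$ and $|mb|_n=mb-(k-1)n\geq\frac n2$. As $n$ is odd, $|mb|_n$ is an integer strictly exceeding $\frac n2$, giving $|mb|_n>\frac n2$. Next, $k\leq s\leq\frac ba$ yields $ka\leq b$, whence $ma\leq\frac{kn}{b}\cdot\frac bk=n$, and coprimality forces $ma<n$, so $|ma|_n=ma$. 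To see $ma>\frac n2$ I use the lower bound on $m$: $ma\geq\frac{(2k-1)a}{2b}n$, and since $b<(s+1)a\leq(2k-1)a$ we get $\frac{(2k-1)a}{2b}>\frac12$, so $|ma|_n=ma>\frac n2$.

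It then remains to place $m$ below $\frac{n}{2e}$, and this is where the hypothesis $a>2e$ enters: from $k\leq s$ we have $b\geq sa\geq ka>2ke$, so $\frac{kn}{b}<\frac{n}{2e}$, and therefore $m\leq\frac{kn}{b}<\frac{n}{2e}$, while trivially $m\geq1$. Thus $M=m$ meets all the requirements of Lemma 2.2(2) with the two inequalities $|ma|_n>\frac n2$ and $|mb|_n>\frac n2$ holding, and the conclusion $\ind(S)=1$ follows.

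I expect the one genuinely load-bearing step to be the estimate $b<(2k-1)a$ used to force $ma>\frac n2$: it is precisely the inequality $2k-1\geq s+1$ (equivalently $t\leq\lfloor\frac s2\rfloor-1$) that makes $(2k-1)a$ clear the upper bound $b<(s+1)a$, so the exact range of $t$ in the hypothesis is not incidental but is dictated by this comparison. By contrast, $s\geq2$ serves only to make the index range $[0,\lfloor\frac s2\rfloor-1]$ nonempty (and $k\geq2$), and $a>2e$ is used solely to push $m$ beneath $\frac{n}{2e}$.
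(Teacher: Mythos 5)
Your proposal is correct: every step checks out — the strictness of $mb<kn$ and $ma<n$ via $\gcd(m,n)=1$, the bound $|mb|_n>\frac{n}{2}$ using that $n$ is odd, the key comparison $(2k-1)a\geq(s+1)a>b$ coming exactly from $t\leq\lfloor\frac{s}{2}\rfloor-1$, and $m<\frac{n}{2e}$ from $a>2e$. This is also essentially the paper's intended route: the paper omits the proof of this lemma, deferring to Li and Peng [11], where the argument is precisely this application of Lemma 2.2(2) with the coprime integer of the interval serving as $M$ and the two inequalities $|Ma|_n>\frac{n}{2}$, $|Mb|_n>\frac{n}{2}$ verified from $sa\leq b<(s+1)a$.
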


For the proof of Lemma 3.3, Lemma 3.4 and Lemma 3.5, one is referred to the proof of Lemma 2.3-2.5 in \cite{LP2}, and we omit it here.

Let $\Omega$ denote the set of those integers: $x\in \Omega$ if and only if $x\in [\frac{(2s-t-1)n}{2b},\frac{(s-t)n}{b}]$ for some $t\in[0,\lfloor\frac{s}2\rfloor-1]$. By Lemma 3.5, we also assume that

$(B)$: $[\frac{(2s-2t-1)n}{2b},\frac{(s-t)n}{b}]$ contains no integers co-prime to $n$ for every $t\in[0,\lfloor\frac{s}2\rfloor-1]$.

\begin{lemma}
Suppose that $a>2e$, $s\geq2$ and $[\frac{(2s-2t-1)n}{2b},\frac{(s-t)n}{b}]$ contains no integers co-prime to $n$ for every $t\in[0,\lfloor\frac{s}2\rfloor-1]$. Then $[\frac{(2s-t-1)n}{2b},\frac{(s-t)n}{b}]$ contains at most 3 integers for every $t\in[0,\lfloor\frac{s}2\rfloor-1]$. Hence $\frac{n}{2b}<4$.
\end{lemma}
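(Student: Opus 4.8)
The plan is to handle the two assertions separately and to lean throughout on the fact that $\gcd(n,6)=1$ forces each prime factor $p_1,p_2,p_3$ of $n$ to satisfy $p_i\ge 5$. First I would record the trivial but decisive length computation: the interval $[\frac{(2s-2t-1)n}{2b},\frac{(s-t)n}{b}]$ appearing in $(B)$ has length exactly
$$\frac{(s-t)n}{b}-\frac{(2s-2t-1)n}{2b}=\frac{(2s-2t)-(2s-2t-1)}{2b}\,n=\frac{n}{2b},$$
independently of $t$. (For $t=0$ this is precisely the interval in the statement, and $t=0$ lies in the admissible range $[0,\lfloor s/2\rfloor-1]$ exactly because $s\ge 2$.) Thus the final bound $\frac{n}{2b}<4$ will follow at once from the integer-count estimate applied to any single admissible $t$.

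The core of the argument is a short sieve estimate showing that each such interval contains at most $3$ integers. I would argue by contradiction: suppose some interval of length $\frac{n}{2b}$ contained at least $4$ integers. Since the integers lying in a real interval form a block of consecutive integers, it would then contain four consecutive integers $w,w+1,w+2,w+3$. These span a range of width $3$, and because each $p_i\ge 5>3$ the gap between consecutive multiples of $p_i$ exceeds $3$; hence at most one of $w,\dots,w+3$ is divisible by $p_i$. Summing over $i=1,2,3$, at most three of the four integers are divisible by some prime factor of $n$, so at least one of them is coprime to $n$ while still lying in the interval. This contradicts condition $(B)$, proving the claim.

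The bound on $\frac{n}{2b}$ is then purely elementary: a closed interval of length $L$ always contains at least $\lfloor L\rfloor$ integers, so an interval of length $\frac{n}{2b}$ containing at most $3$ integers forces $\lfloor\frac{n}{2b}\rfloor\le 3$, that is, $\frac{n}{2b}<4$.

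I expect no serious obstacle, but the one point requiring care — and the place where the hypothesis $\gcd(n,6)=1$ is indispensable — is the covering count in the middle step. The output constant $3$ coincides with the number of distinct prime factors of $n$ precisely because each prime contributes at most one forbidden residue inside a window of four consecutive integers, which in turn requires every $p_i>3$. Were $2$ or $3$ permitted to divide $n$, a single prime could already cover two of the four integers and the estimate would collapse; so all of the arithmetic content is concentrated in verifying that a width-$3$ window admits at most one multiple of each $p_i$.
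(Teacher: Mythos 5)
Your proof is correct: the length computation $\frac{n}{2b}$, the sieve step (among four consecutive integers, each prime factor $p_i\ge 5$ of $n=p_1p_2p_3$ divides at most one, so at least one of the four is coprime to $n$, contradicting condition $(B)$), and the closing count $\lfloor\frac{n}{2b}\rfloor\le 3$, hence $\frac{n}{2b}<4$, are all sound. The paper itself gives no proof of this lemma (it defers to the reference [20] and to Li--Peng [11], where the two-prime-factor analogue yields ``at most $2$ integers''), and your argument is precisely that standard argument extended from two primes to three, so it matches the intended approach.
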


\begin{lemma}
Suppose that $a>2e$, $s\geq4$ and $[\frac{(2s-2t-1)n}{2b},\frac{(s-t)n}{b}]$ contains no integers co-prime to $n$ for every $t\in[0,\lfloor\frac{s}2\rfloor-1]$. Then $[\frac{(2s-2t-1)n}{2b},\frac{(s-t)n}{b}]$ contains at most two integers for every $t\in[0,\lfloor\frac{s}2\rfloor-1]$ and $\frac{n}{2b}<3$.
\end{lemma}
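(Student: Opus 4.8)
The plan is to argue by contradiction and refine the counting that already gives Lemma 3.6. By that lemma I may assume each interval $I_t:=[\frac{(2s-2t-1)n}{2b},\frac{(s-t)n}{b}]$ contains at most three integers and that $\frac{n}{2b}<4$. Geometrically $I_t$ is the right half of a ``cell'' of width $\frac{n}{b}$, and $I_t$, $I_{t+1}$ are the right halves of consecutive cells, so their right endpoints $\frac{(s-t)n}{b}$ differ by exactly $\frac{n}{b}$. The hypothesis $s\ge4$ guarantees at least two such restricted right-half cells, so the cell under scrutiny always has a restricted neighbour to compare with.

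First I would record the arithmetic of three consecutive integers. If some $I_t$ contained three integers, they are consecutive, say $N,N+1,N+2$, and by $(B)$ each is divisible by one of $p_1,p_2,p_3$. Since $\gcd(n,6)=1$ forces $p_i\ge5$, no prime divides two of three consecutive integers; hence the three primes are distributed bijectively, exactly one dividing each of $N,N+1,N+2$. This is the mechanism behind the ``at most three'' bound, and it fixes the residue of every nearby integer modulo $p_1,p_2,p_3$.

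The engine is to transport this residue pattern by the offset $\frac{n}{b}$ into the neighbouring restricted cell. Suppose some $I_{t_0}$ has three integers. If $\frac{n}{2b}\ge3$, every cell has width $\ge3$, so the neighbour $I_{t_0+1}$ also contains three consecutive integers $M,M+1,M+2$; comparing the two right endpoints, whose difference is $\frac{n}{b}\in[6,8)$, yields $N-M\in\{6,7,8\}$. A direct check of residues then shows that the members of $I_{t_0+1}$ (all of $N-8,\dots,N-4$ in the relevant window) cannot simultaneously be non-coprime to $n$: in each case two of them would force one prime to equal both $5$ and $7$, or two distinct primes both to equal $7$. Thus $I_{t_0+1}$ would contain an integer coprime to $n$, contradicting $(B)$. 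This already excludes $\frac{n}{2b}\ge3$.

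It remains to treat the range $\frac{n}{b}\in[4,6)$, where $I_{t_0}$ has three integers but its neighbour may carry only two. Running the same residue transport, the computation leaves exactly two surviving configurations, in each of which the neighbour consists of two non-coprimes tied to the primes $5$ and $7$. The main obstacle is eliminating these two configurations, since the abstract interval/coprime picture alone permits them. Here I would use that the presence of the third integer forces $N+2=\lfloor s\cdot\frac{n}{b}\rfloor$, so $N<6s$, while the surviving configurations pin two of the $p_i$ to be exactly $5$ and $7$, whence the third prime is $n/35>28$ and must divide $N$; together with $n>1000$, with $a>2e$, and with the rigidity of $\frac{n}{b}=n/b$ imposed by $\gcd(n,b)\in\{p_1,p_2,p_1p_3,p_2p_3\}$, the required alignment becomes impossible. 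This case analysis on which prime(s) divide $b$ is the delicate step; once it is complete, no cell contains three integers, and then $\frac{n}{2b}<3$ follows at once, because any interval of width $\ge3$ contains at least three integers.
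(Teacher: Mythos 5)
A preliminary remark: the paper never actually proves this lemma. Lemmas 2.2--2.9 are deferred to a reference [20] that does not even appear in the bibliography, and to Li--Peng [11], where the corresponding statement is essentially trivial because there $n=p^{\alpha}q^{\beta}$ has only two prime factors (three consecutive integers cannot all share a factor with $n$ when both primes exceed $3$). So your argument must be judged on its own merits, and the entire content of the lemma is the three-prime phenomenon your second step tries to handle. Your first step is correct and complete: if $\frac{n}{2b}\ge 3$, then since $s\ge 4$ supplies a second interval, both $I_{t_0}$ and a neighbour contain exactly three consecutive integers; the right endpoints differ by $\frac{n}{b}\in[6,8)$, so $N-M\in\{6,7,8\}$, and in each of the three cases the residue bookkeeping forces either one prime to equal both $5$ and $7$, or two distinct primes both to equal $7$. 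I verified all three cases; this correctly yields $\frac{n}{2b}<3$.

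The gap is in your second step, and it is genuine. The transport correctly isolates the surviving configuration ($\frac{n}{b}\in(5,6)$, $5\mid N+1$, $7\mid N+2$, $p:=n/35\mid N$, the neighbour containing exactly $N-5$ and $N-4$), but the sentence ``the required alignment becomes impossible'' is an assertion, not a proof, and the ingredients you list do not by themselves produce a contradiction. The size bound $N+2\le (s-t_0)\frac{n}{b}<6s$ together with $p\ge 29$ kills the configuration only for $s\in\{4,5\}$; you may not invoke $s\le 9$ here, since Lemma 2.9 sits downstream of this lemma in the chain, and nothing you list rules out, say, $p=89$, $N=89$, $n=3115$, $s=16$, $\frac{n}{b}\in(5.7,6)$: this satisfies $N<6s$, $p\mid N$, $p>28$, $n>1000$, and is not visibly incompatible with $a>2e$ or with $\gcd(n,b)$ being admissible. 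What is missing is an actual mechanism for $s\ge 6$, for instance: (i) a second transport --- when $s\ge 6$ a third interval $I_{t_0\pm 2}$ exists, and in the pinned configuration the only integers in its range not coprime to $n$ are $N-9$ (the multiple of $5$) and $N-12$ (the multiple of $7$), respectively two non-adjacent integers on the upward side, so that interval cannot contain two consecutive non-coprime integers, a contradiction; or (ii) for interior $t_0$, the two neighbours pin mutually incompatible patterns; or (iii) the full arithmetic analysis you allude to: $N=kp$ with $k\le 3$ (from $kp+2\le \frac{sn}{b}\le\frac{n}{a}$ and $a>2e\ge 10$), the congruence $N\equiv 19\pmod{35}$, and the requirement that $e<a<35$ and $b\in(\frac{n}{6},\frac{n}{5})$ all carry distinct gcds from $\{p_1,p_2,p_1p_3,p_2p_3\}$, which pigeonholes into a contradiction. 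Some such argument --- precisely the step you yourself defer as ``the delicate step'' --- is the real mathematical content of the lemma, and without it the proof is incomplete.
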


\begin{lemma}
Suppose that $a>2e$ and $s\geq 6$, then there exists $t_1\in\{0,\lfloor\frac{s}2\rfloor-1\}$ such that   $[\frac{(2s-t_1-1)n}{2b},\frac{(2s-t_1)n}{2b}]$ contains exactly one integer and $\frac{n}{2b}<2$.
\end{lemma}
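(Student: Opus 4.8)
The plan is to recast everything in terms of $\ell = \frac{n}{2b}$. Since $\frac{jn}{2b}=j\ell$, every endpoint in the statement is an integer multiple of $\ell$, so it is natural to work with the cells $C_k := [(k-1)\ell, k\ell]$: the interval in the conclusion is $C_{2s-t_1}$, and the intervals $J_t := [\frac{(2s-2t-1)n}{2b}, \frac{(s-t)n}{b}] = [(2s-2t-1)\ell, (2s-2t)\ell]$ of hypothesis $(B)$ are the cells $C_{2s-2t}$. The key elementary observation I would record first is that a closed interval of length $\ell$ containing exactly one integer forces $\ell<2$; hence it suffices to produce a single $t_1\in[0,\lfloor\frac{s}{2}\rfloor-1]$ for which $C_{2s-t_1}$ contains exactly one integer, and the bound $\frac{n}{2b}<2$ then comes for free. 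Because $b\le\frac{n}{2}$ gives $\ell\ge1$, every cell already holds at least one integer, so the real task is to rule out that all of the consecutive candidate cells $C_{2s}, C_{2s-1},\dots, C_{2s-\lfloor s/2\rfloor+1}$ contain two or more integers.

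So I would argue by contradiction, assuming each candidate cell contains at least two integers. By the preceding lemma (the $s\ge4$ case, applicable since $s\ge6$) we have $\ell<3$ and each $J$-cell contains at most two integers; combined with $(B)$, none of these integers is coprime to $n$. Recalling that $\gcd(n,6)=1$ forces every prime factor of $n$ to be at least $5$, and that two consecutive integers are coprime, any $J$-cell carrying two integers carries a consecutive pair divisible by two \emph{distinct} primes among $p_1,p_2,p_3$. Since $s\ge6$ gives $\lfloor\frac{s}{2}\rfloor\ge3$, hypothesis $(B)$ supplies at least three $J$-cells $J_0,J_1,J_2$, each holding only non-coprime integers; these sit in a window of length $5\ell<15$, and crucially their positions are rigid, their lower endpoints being the equally spaced points $(2s-1)\ell,(2s-3)\ell,(2s-5)\ell$.

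The heart of the matter is then to count how many consecutive non-coprime pairs can inhabit these rigidly placed cells. A prime $p\ge5$ has multiples spaced $p$ apart, a consecutive pair divisible by two fixed primes $p_i\neq p_j$ can occur only in two residue classes modulo $p_ip_j$, and $n>1000$ forces the largest prime factor to be large (in the extreme $\{p_1,p_2\}=\{5,7\}$ it exceeds $28$, and in general it exceeds $10$), so that prime contributes at most one admissible multiple to the whole window. I expect this counting step to be the main obstacle: the crude density estimate alone still permits three pairs, so the contradiction must be extracted from the exact interplay between the congruences $p_i\mid m$, $p_j\mid(m+1)$ and the arithmetic-progression geometry of the cell boundaries at multiples of $\ell$, together with a case analysis on the sizes of the $p_i$ (whether two are as small as $5,7$, or all three are comparable as in $7\cdot 11\cdot 13$). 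In the tightest configurations I anticipate needing the extra information that $\gcd(n,b)$ is one of $p_1,p_2,p_1p_3,p_2p_3$, which pins $\ell=\frac{n}{2b}$ relative to the $p_i$ and should eliminate the surviving cases.

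Once the assumption is refuted, some candidate cell $C_{2s-t_1}$ contains exactly one integer, which yields the required $t_1$, and $\frac{n}{2b}<2$ follows from the elementary observation made at the outset.
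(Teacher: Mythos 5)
Your reduction is sound as far as it goes: rewriting everything in terms of $\ell=\frac{n}{2b}$ and the cells $C_k=[(k-1)\ell,k\ell]$, observing that a closed interval of length $\ell$ containing exactly one integer forces $\ell<2$, noting that $\ell\geq1$ puts at least one integer in every cell, and remarking that a $(B)$-cell with two integers must contain a consecutive pair divisible by two \emph{distinct} prime factors of $n$ are all correct. But the proof stops exactly where the lemma lives: the entire contradiction step is deferred with ``I expect this counting step to be the main obstacle'' and ``should eliminate the surviving cases,'' so nothing is actually proved there. (For calibration: the paper itself gives no argument either --- it refers the reader to [20], which does not even appear in its reference list, and to Li--Peng [11]; the substance of this lemma is precisely the arithmetic you have left open.)

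Moreover, the plan targets the wrong configuration. Under your contradiction hypothesis the candidate cells are $C_{2s},C_{2s-1},\dots,C_{2s-\lfloor s/2\rfloor+1}$, and among these only the two even cells $C_{2s}$ and $C_{2s-2}$ are $(B)$-cells; the third $(B)$-cell $C_{2s-4}$ requires $t_1=4$, hence $\lfloor s/2\rfloor-1\geq4$, i.e.\ $s\geq10$, so throughout the range $6\leq s\leq9$ (which any proof must cover, and which by Lemma 2.9 is the only range that occurs) it is \emph{not} a candidate and may legitimately contain a single integer, while the odd candidate cells carry no coprimality constraint at all. So what must be refuted is a configuration with just \emph{two} consecutive non-coprime pairs, not three, and density counting cannot do that: by the Chinese Remainder Theorem the pattern $p_1\mid m'$, $p_2\mid m'+1$, $p_1\mid m'+5$, $p_3\mid m'+6$ (with $p_1=5$, $p_2=7$, spacing $5\in[\ell+1,3\ell-1]$ for $\ell\in[2,3)$) is perfectly consistent, so two pairs at the prescribed distance do occur arithmetically. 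Your density remark is also inaccurate: a largest prime factor of $11$ or $13$, as in $n=7\cdot11\cdot13$, can contribute two multiples to a window of length $5\ell+1<16$. The contradiction therefore has to be extracted from the quantitative data you only gesture at --- the relation $2b\ell=n$, the (A1) structure of $\gcd(n,b)$, and the bounds tying $a$, $b$, $e$ together --- and until that case analysis is carried out, the lemma remains unproven.
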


\begin{lemma}
Suppose that $a>2e$ and $s\geq 8$, then $[\frac{(2s-2t-1)n}{2b},\frac{(2s-t)n}{b}]$ contains exactly one integer for every $t\in[0,\lfloor\frac{s}2\rfloor-1]$.
\end{lemma}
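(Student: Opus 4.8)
The plan is to reduce the statement to a non-existence claim and then exploit the divisibility pattern imposed by $(B)$. By Lemma 3.8 we already have $\frac{n}{2b}<2$, and since $b<\frac{n}{2}$ we also have $\frac{n}{2b}>1$. Thus every interval $I_t=[\frac{(2s-2t-1)n}{2b},\frac{(s-t)n}{b}]$ has length $\frac{n}{2b}\in(1,2)$, so it contains at least one and at most two integers, and two such integers are necessarily consecutive. Consequently ``exactly one'' is equivalent to ``not two,'' and it suffices to prove that no $I_t$ with $t\in[0,\lfloor\frac{s}{2}\rfloor-1]$ contains two integers.

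First I would record two elementary facts. Because $\gcd(n,6)=1$, each prime divisor of $n$ is at least $5$, so an interval of length $<2$ can contain at most one multiple of any fixed $p_\ell$; and by the standing assumption $(B)$ every integer lying in some $I_t$ is non-coprime to $n=p_1p_2p_3$, hence divisible by one of $p_1,p_2,p_3$. Now suppose, for contradiction, that $I_{t_0}$ contains two consecutive integers $u$ and $u+1$. Since $\gcd(u,u+1)=1$, they must be divisible by distinct primes among $p_1,p_2,p_3$.

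Next I would translate interval membership into a statement about $mb \bmod n$: writing $k=s-t$, an integer $m$ lies in $I_t$ exactly when $mb\in[kn-\frac{n}{2},\,kn]$. Hence each interval carries a distinguished occupant $m_t$ with $m_tb$ just below the multiple $(s-t)n$, and as $t$ runs through $0,1,\dots$ the occupants decrease by $m_t-m_{t+1}\approx\frac{n}{b}\in(2,4)$. A purely metric count is insufficient here: the multiples of $p_1,p_2,p_3$ are plentiful enough to supply more than the $\lfloor\frac{s}{2}\rfloor+1$ non-coprime occupants we would need, so the contradiction must come from the rigid arithmetic rather than from density. The idea is therefore to follow the residues of the occupants $m_t$ modulo $p_1,p_2,p_3$ along the at least four intervals made available by $s\ge 8$, and to show that forcing the extra occupant $u+1$ into $I_{t_0}$ over-determines these residues, so that one of the neighbouring intervals is compelled to contain an integer coprime to $n$, contradicting $(B)$. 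Here the hypothesis $a>2e$ and the constraint $e+c=a+b$ enter to pin down the small offsets $r_t=(s-t)n-m_tb$ and the residues of the interval endpoints modulo each prime.

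The main obstacle is exactly this residue bookkeeping. The pattern $\{\gcd(n,e),\gcd(n,c),\gcd(n,b),\gcd(n,a)\}=\{p_1,p_2,p_1p_3,p_2p_3\}$ leaves several admissible assignments of the primes to $e,a,b,c$, and for each one must verify that no doubled interval can coexist with the no-coprime condition across all $\lfloor\frac{s}{2}\rfloor\ge 4$ intervals. I expect the heart of the proof to be a case split governed by which of $a,b,c$ carries the factor $p_3$, controlled by $a>2e$ at the small end and by $s\ge 8$ (hence at least four occupants) at the large end, which is what finally makes the propagation close up into a contradiction.
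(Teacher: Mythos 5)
Your opening reductions are correct: since $b<\frac{n}{2}$ and since Lemma 2.7 (not ``Lemma 3.8'', which does not exist in this paper) gives $\frac{n}{2b}<2$ once $s\geq 6$, each interval $I_t$ has length $\frac{n}{2b}\in(1,2)$, hence contains one or two integers, two of which would have to be consecutive; and under the standing assumption $(B)$, together with $\gcd(n,6)=1$, two consecutive occupants would have to be multiples of two \emph{distinct} primes among $p_1,p_2,p_3$. So ``exactly one'' is indeed equivalent to ``no doubled interval''. But from that point on your text stops being a proof. The whole content of the lemma is the claim that a doubled interval is incompatible with $(B)$ across the $\lfloor\frac{s}{2}\rfloor\geq4$ intervals, and you never establish it: you assert that the contradiction ``must come from the rigid arithmetic'', that one should ``follow the residues of the occupants modulo $p_1,p_2,p_3$'', and that you ``expect the heart of the proof to be a case split governed by which of $a,b,c$ carries the factor $p_3$''. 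No propagation rule is proved, no case of that case split is carried out, and no contradiction is ever derived. What remains is a plan whose feasibility is precisely the question; the gap coincides with the entire difficulty of the statement.

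Concretely, the missing step is the following. Assuming $u,u+1\in I_{t_0}$ with, say, $p_1\mid u$ and $p_2\mid u+1$, you must exhibit an integer in some other interval $I_t$ and prove it is coprime to $n$. The occupants of the neighbouring intervals are not rigidly determined by $u$: their distance from $u$ can take several integer values, because the spacing $\frac{n}{b}$ of the right endpoints lies strictly between $2$ and $4$ and need not be an integer. Pinning them down requires quantitative input that your sketch names but never uses in a checkable way: that $\gcd(n,b)\in\{p_1p_3,p_2p_3\}$, so that $\frac{n}{b}\leq p_j$ for the prime $p_j\nmid b$ and each interval can contain at most one multiple of each prime; the relation $e+c=a+b$; and the bound $a>2e$. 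For what it is worth, the paper itself contains no proof of this lemma either: it refers the reader to [20] (an item absent from its bibliography) and to Li and Peng [11], where the two-prime analogue is proved by exactly the sort of explicit divisibility bookkeeping you defer. So your outline points in a plausible direction, but as written it does not prove the lemma.
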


\begin{lemma}
Under assumption $a>2e$, we have $s\leq9$.
\end{lemma}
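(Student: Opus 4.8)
\emph{Overall approach.} I would argue by contradiction, assuming $s\ge 10$ and deriving a violation of the standing hypothesis (B). The ladder of Lemmas 3.6--3.9 is designed precisely so that, as $s$ grows, the intervals $I_t:=\left[\frac{(2s-2t-1)n}{2b},\frac{(s-t)n}{b}\right]$ become short and integer-poor, and Lemma 3.10 should be the step where ``too many'' such forced intervals collide. Since $s\ge 10\ge 8$, Lemma 3.9 applies and tells me that each $I_t$ with $t\in[0,\lfloor\frac s2\rfloor-1]$ contains exactly one integer $m_t$; and since $s\ge 6$, Lemma 3.8 gives that the common length $L:=\frac n{2b}$ of the $I_t$ satisfies $1<L<2$. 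As $\lfloor\frac s2\rfloor-1\ge 4$, I have at my disposal at least the five integers $m_0>m_1>m_2>m_3>m_4$.

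\emph{Forced divisors and rigid positions.} By (B) no $m_t$ is coprime to $n=p_1p_2p_3$, so each $m_t$ is divisible by one of $p_1,p_2,p_3$; this attaches to each index $t\in\{0,\dots,4\}$ a prime from a three-element set. For the positions, the left endpoints $\alpha_t:=\frac{(2s-2t-1)n}{2b}$ form an arithmetic progression with $\alpha_t-\alpha_{t+1}=\frac nb=2L$, while each $I_t$ has length $L$. Uniqueness of $m_t$ forces $m_t=\lceil\alpha_t\rceil$ together with the fractional-part condition $\{\alpha_t\}<2-L$ for every $t$, and it yields the spacing bound $L\le m_t-m_{t+1}\le 3L$. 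Summing, $0<m_i-m_j\le 3L(j-i)<24$ for $0\le i<j\le 4$, so if one prime $p\in\{p_1,p_2,p_3\}$ divides two of the $m_t$, then $p\mid(m_i-m_j)$ with $0<m_i-m_j<24$, whence $p<24$.

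\emph{The contradiction.} Five integers distributed among three primes force, by pigeonhole, some prime to divide at least two of them, and by the previous step that prime is $<24$. To finish I would convert the positional data into congruences: writing $m_t=\lceil\alpha_t\rceil$ and using $\frac{sn}b\le\frac na<\frac{(s+1)n}b$ (from $s=\lfloor\frac ba\rfloor$) to anchor the top interval near $\frac na$, together with the hypothesis $\{\gcd(n,e),\gcd(n,c),\gcd(n,b),\gcd(n,a)\}=\{p_1,p_2,p_1p_3,p_2p_3\}$ and the relations $e+c=a+b$, $a>2e$, I would pin the residues of the $m_t$ modulo each small prime and show that no assignment of $p_1,p_2,p_3$ to five so tightly packed, tightly positioned integers is simultaneously consistent with (B). This should collapse either to a direct incompatibility or to a lower bound on $b$ forcing $n\le 1000$, contradicting the standing assumption $n>1000$.

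\emph{Main obstacle.} The crux is the last step. The spacing estimate only shows that a repeated prime must be small, which is not yet absurd, since $n$ may genuinely have a small prime factor; the substance is to exploit the \emph{exact} location $m_t=\lceil\frac{(2s-2t-1)n}{2b}\rceil$ and the uniqueness constraint $\{\alpha_t\}<2-L$ to obtain rigid congruences, and then to rule out every divisibility pattern of $m_0,\dots,m_4$ by a finite case check. I expect this to mirror closely the corresponding step in \cite{LP2}, with the bookkeeping of the three-prime patterns being the only real difficulty.
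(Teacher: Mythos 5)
Your proposal is a strategy outline, not a proof: the step that carries the entire content of the lemma is explicitly left as a promise. The scaffolding you erect is sound and matches the machinery the paper has already set up (your ``Lemmas 3.8 and 3.9'' are the paper's Lemmas 2.7 and 2.8): for $s\ge 10$ there are at least five intervals $I_t$, $t\in[0,4]$, each of length $L=\frac{n}{2b}\in(1,2)$ and containing exactly one integer $m_t=\lceil\alpha_t\rceil$, with left endpoints spaced by $2L$; hypothesis (B) makes every $m_t$ divisible by some $p_i$; pigeonhole plus the bound $m_i-m_j<24$ forces a repeated prime below $24$. All of this is correct. But, as you yourself concede, it is not a contradiction: nothing forbids $n$ from having a prime factor below $24$ --- on the contrary, the paper's later case analysis lives almost entirely in that regime ($p_3=5$, $n=5\cdot 13\cdot 41$, $n=5\cdot 17\cdot 19$, and so on). Nor can a contradiction be extracted from the spacing and divisibility data you have actually derived: those constraints are satisfiable, e.g.\ $m_t=35,33,30,28,25$ (the ceilings of $(2s-2t-1)L$ for $s=14$ and $L\approx 1.29$), with each term divisible by one of the primes $5,7,11$. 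Hence everything hinges on the congruence and case analysis you describe only in the subjunctive (``I would pin the residues \ldots and show that no assignment \ldots is consistent''), which must bring in $\gcd(n,b)\in\{p_1p_3,p_2p_3\}$, $e+c=a+b$, $s=\lfloor b/a\rfloor$, and $n>1000$, and which you never carry out. That analysis \emph{is} the lemma; without it you have proved nothing beyond what Lemmas 2.5--2.8 already give.

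For comparison: the paper does not prove this lemma either --- it states Lemmas 2.2--2.9 and refers the reader to ``[20]'' (a reference that does not in fact appear in the bibliography) and to Li--Peng [11]. So there is no in-paper argument against which your sketch can be measured; judged on its own terms, it has a genuine gap located exactly where the difficulty of the statement is concentrated.
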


For the proof of Lemma 2.2-2.9 and  more details, one is referred to [20], Li and Peng's paper [11] is also recommended.

Out of question, we can assume that $e=\min\{p_1,p_2\}$, without less of generality, let $p_1<p_2$, then $e=p_1$.

\begin{lemma}
If $a<4e$, then $p_3<p_1=e<p_2$ and $a=kp_2$ for some $k\in[1,3]$.
\end{lemma}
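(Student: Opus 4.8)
The plan is to first pin down $\gcd(n,a)$, which immediately yields the shape of $a$, and then to extract $p_3<p_1$ from the Diophantine relation $e+c=a+b$ together with the standing assumption (B).

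First I would determine $\gcd(n,a)$. By hypothesis $\gcd(n,a)\in\{p_2,p_1p_3,p_2p_3\}$, and since $\gcd(n,6)=1$ each of $p_1,p_2,p_3$ is at least $5$. As $\gcd(n,a)\mid a<4e=4p_1$, the options $p_1p_3$ and $p_2p_3$ are excluded, because $p_1p_3\ge 5p_1>4p_1$ and $p_2p_3>p_1p_3>4p_1$. Hence $\gcd(n,a)=p_2$, so $p_2\mid a$; writing $a=kp_2$ and using $a<4p_1<4p_2$ forces $k\in\{1,2,3\}$. The inequality $p_1=e<p_2$ is just the standing convention $e=\min\{p_1,p_2\}=p_1$, so it only remains to prove $p_3<p_1$.

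Since $\gcd(n,e)=p_1$ and $\gcd(n,a)=p_2$, the two remaining values split as $\{\gcd(n,b),\gcd(n,c)\}=\{p_1p_3,p_2p_3\}$; in particular $p_3\mid b$ and $p_3\mid c$. From $e+c=a+b$ I get $c-b=a-e=kp_2-p_1$, which is positive (because $a\ge p_2>p_1=e$) and strictly less than $3e=3p_1$. As $c-b$ is a nonzero multiple of $p_3$, this already gives $p_3\le c-b<3p_1$, i.e.\ $p_3<3p_1$; the whole difficulty is to improve the constant $3$ to $1$. To close this gap I would argue by contradiction, assuming $p_1<p_3$. Then $c-b<3p_1<3p_3$ forces $c-b\in\{p_3,2p_3\}$, and since $p_1,p_2,p_3$ are all odd the parity of $kp_2-p_1$ restricts the admissible $k$ for each value of $c-b$. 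Writing $b,c$ explicitly via the two allocations (say $b=p_2p_3\beta'$, $c=p_1p_3\gamma'$, or the reverse), the relation $c-b=kp_2-p_1$ becomes a tight linear constraint on $\beta',\gamma'$, while $b,c<n/2$ bounds them. I would then feed this into (B): the forbidden intervals $I_t=[\frac{(2s-2t-1)n}{2b},\frac{(s-t)n}{b}]$, each of length $n/2b$, contain no integer coprime to $n$. Using the earlier interval estimates (which bound $n/2b$) together with the factorizations of $b$, I would show that $p_1<p_3$ makes the residues divisible by $p_1,p_2,p_3$ too sparse to cover all integers of some $I_t$, producing an integer coprime to $n$ there and contradicting (B).

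The main obstacle is exactly this last step: the elementary divisibility argument only delivers $p_3<3p_1$, and removing the spurious factor of $3$ genuinely requires the global input (B). Indeed, dropping (B) the statement is false — one can construct minimal zero-sum sequences with $a<4e$ yet $p_3>p_1$ (e.g.\ with $p_1=7$, $p_2=13$, $p_3=19$) — so the proof must be arranged precisely so that $p_1<p_3$ is incompatible with the emptiness of all the intervals $I_t$. Making that incompatibility quantitative, by controlling both the length $n/2b$ and the location of each $I_t$ in terms of the factorization of $b$, is the delicate part of the argument.
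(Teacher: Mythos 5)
Your first paragraph is correct and coincides with the paper's opening step: since $\gcd(n,a)$ divides $a<4e=4p_1$ and every prime factor of $n$ is at least $5$, the values $p_1p_3$ and $p_2p_3$ are excluded, so $a=kp_2$ with $k\in[1,3]$; the reduction of the remaining claim to ruling out $p_1<p_3$, and the observation that $p_3\mid c-b=kp_2-p_1<3p_1$ with parity forcing $c-b=2p_3$ when $k\in\{1,3\}$ and $c-b=p_3$ when $k=2$, also match the paper. Your remark that the conclusion genuinely requires the standing assumption (B) is accurate and well observed: the triple $p_1=7,\,p_2=13,\,p_3=19$ with $a=2p_2$ (so $2p_2-p_1=p_3$) does admit a minimal zero-sum sequence of the required shape, and it is precisely one of the configurations the paper must eliminate.

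However, the heart of the lemma, namely deriving a contradiction from $p_1<p_3$, is exactly the step you do not carry out. You propose to return to the intervals of (B) and show that multiples of $p_1,p_2,p_3$ are too sparse to cover some $I_t$, but you never make this quantitative and you yourself label it the delicate part; as written this is a plan, not a proof, so there is a genuine gap. Moreover, the paper does not argue this way: it never revisits (B) directly here, but instead invokes its already-packaged consequence, Lemma 2.9, that $s=\lfloor b/a\rfloor\le 9$, i.e. $b<10a<40p_1$. Since $b$ is a multiple of $p_1p_3$ or $p_2p_3$, this bound pins $b$ down to one or two explicit small multiples, and then the complementary divisibility of $c=b+a-e$ by $p_2p_3$ (or $p_1p_3$) fails numerically. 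For instance, when $k=3$ the relation $3p_2-p_1=2p_3$ forces $p_3>p_2$, a short computation gives $p_1\ge 29$ and $p_3\ge 41$, whence $\frac{b}{a}\ge\frac{p_1p_3}{3p_2}>\frac{p_3}{4}>10$, contradicting $s\le 9$; when $k=2$ one gets $b=p_1p_3$ and $c=p_3(p_1+1)<p_2p_3$, which cannot be divisible by $p_2p_3$; the case $k=1$ is similar, with a few small prime triples excluded using $n>1000$. So the missing contradiction is supplied in the paper by the bound $s\le 9$ combined with the divisibility constraint on $c$ (a constraint your sketch writes down but never exploits), and to repair your proposal you would either have to cite Lemma 2.9 and reproduce this case analysis, or actually execute your covering argument, which would amount to re-proving that lemma.
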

\begin{proof}
Since $p_2>p_1$ and $p_2p_3>p_1p_3\geq5p_1$, it must hold that $a=kp_2$ for some $k\in[1,3]$. Hence we only need prove the case $p_3>p_1$.

If $p_3>p_1$ and $a=3p_2$, it holds that $p_3|(c-b)=(a-e)=3p_2-p_1\in2\mz$. If $3p_2-p_1\geq4p_3$, we have $a\geq 4p_3+p_1>5p_1=5e$, a contradiction. Then $3p_2-p_1=2p_3$ and $p_3>p_2$. Simply computing shows that $p_1\geq29$ and $p_3\geq41$. Then $a=3p_2>2p_1=2e$ and $\frac{b}{a}\geq\frac{p_1p_3}{3p_2}>\frac{p_3}{4}>10$, which contradicts to the results of Lemma 2.9.

If $p_3>p_1$ and $a=p_2$, it holds that $p_3|(c-b)=(a-e)=p_2-p_1\in2\mz$. If $p_2-p_1\geq 4p_3$, we have $a>5e$, a contradiction. Then $p_2-p_1=2p_3$. Applying Lemma 2.9, similar to above, we have
$b=2p_1p_3, p_1=11, p_3=13, p_2=37$, or $b=p_1p_3$. If  $p_1=11, p_3=13, p_2=37$ and $b=2p_1p_3$, then $p_2p_3|c=b+a-e=2p_1p_3+p_2-p_1=312<481=p_2p_3$, a contradiction. If $b=p_1p_3$, then $p_2p_3|c=b+a-e=p_1p_3+p_2-p_1\leq(p_2-4)p_3+p_2-p_1<p_2p_3$, which is a contradiction.

If $p_3>p_1$ and $a=2p_2$, it holds that $p_3|(c-b)=(a-e)=2p_2-p_1\in2\mz+1$. If $2p_2-p_1\geq3p_3$, we have $a>4e$, a contradiction. Then $2p_2-p_1=p_3$. Applying Lemma 2.9, similar to above, we have $b=p_1p_3$ and $p_1=7, p_2=13, p_3=19$ or $p_1=11, p_2=17, p_3=23$. Since $p_2p_3|c=b+a-e=p_1(p_3-1)+p_2<p_3p_2$, we obtain a contradiction.
\end{proof}

\begin{lemma}
If $2e<a<4e$, then $\ind(S)=1$.
\end{lemma}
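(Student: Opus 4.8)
The plan is to use Lemma 2.10 to pin down the arithmetic shape of $S$ and then to manufacture a multiplier to which Lemma 2.2 applies. Since $2e<a<4e$ in particular gives $a<4e$, Lemma 2.10 applies and yields $p_3<p_1=e<p_2$ together with $a=kp_2$ for some $k\in\{1,2,3\}$. Because $k\le 3<5\le p_3$, the integer $k$ is coprime to $n$, so $\gcd(n,a)=\gcd(n,kp_2)=p_2$; consequently $\{\gcd(n,b),\gcd(n,c)\}=\{p_1p_3,p_2p_3\}$, and in particular $p_3\mid b$, $p_3\mid c$, while $e=p_1$ and $a=kp_2$. The relation $e+c=a+b$ rewrites as $c-b=kp_2-p_1$, and together with $p_3\mid b,c$ it reproduces the divisibility $p_3\mid(kp_2-p_1)$ already used in Lemma 2.10. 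Feeding $a=kp_2$ back into $2p_1<kp_2<4p_1$ splits the argument into three narrow windows: $k=1$ with $2p_1<p_2<4p_1$, $k=2$ with $p_1<p_2<2p_1$, and $k=3$ with $p_1<p_2<\tfrac43 p_1$; crossing this with the two possible assignments of $\{p_1p_3,p_2p_3\}$ to $\{b,c\}$ leaves six branches.

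Next I would record the structural fact driving the multiplier search: for every $m$ with $\gcd(m,n)=1$ one has $|me|_n\in p_1\mz$, $|ma|_n\in p_2\mz$, and $|mb|_n,|mc|_n\in p_3\mz$, since $n$ is divisible by each prime. Writing $a=kp_2$ this sharpens to $|ma|_n=p_2\,|mk|_{p_1p_3}$ (least positive residue modulo $p_1p_3$), so that $|ma|_n>\tfrac n2$ is equivalent to the purely residual condition $|mk|_{p_1p_3}>\tfrac{p_1p_3}{2}$. Likewise, in the branch $\gcd(n,b)=p_1p_3$ (say $b=p_1p_3\beta$) one gets $|mb|_n=p_1p_3\,|m\beta|_{p_2}$, with the analogous reduction modulo $p_1$ in the branch $\gcd(n,b)=p_2p_3$. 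Thus the inequalities appearing in Lemma 2.2(2) decouple into independent congruence conditions modulo $p_1p_3$ and modulo $p_2$ (resp. $p_1$), which one can try to meet by the Chinese Remainder Theorem.

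The core is then, in each of the finitely many configurations, to produce $M\in[1,\tfrac{n}{2e}]=[1,\tfrac{p_2p_3}{2}]$ with $\gcd(M,n)=1$ for which $|Ma|_n>\tfrac n2$ and $|Mb|_n>\tfrac n2$ both hold, and to invoke Lemma 2.2(2) (two of the three inequalities suffice, and these two are the convenient ones). By the reductions above this amounts to $|Mk|_{p_1p_3}>\tfrac{p_1p_3}{2}$ together with a ``top half'' condition for $M$ modulo $p_2$ (resp. modulo $p_1$). When a direct CRT witness of the right size is awkward, I would instead apply Lemma 2.2(1) or Lemma 2.3, choosing the auxiliary parameter $\ell\le\min\{b,s\}$ and seeking an integer $m$ coprime to $n$ in the short interval $[\tfrac{\ell n}{c},\tfrac{\ell n}{b}]$; here the smallness of $a=kp_2$ makes the side constraint $ma<n$, i.e. $m<p_1p_3/k$, essentially automatic, and the interval length is controlled by $c-b=kp_2-p_1$. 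Throughout I would use $a>2e$ with Lemma 2.9 to keep $s\le 9$, which, with $k\le 3$ and the window bounds from Lemmas 2.5--2.8, cuts the list of surviving shapes down to a bounded collection.

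The hard part will be the coprimality-with-size bookkeeping. CRT readily supplies a residue with the desired sign pattern, but Lemma 2.2(2) demands the witness be small, $M\le p_2p_3/2$, i.e. lying in the bottom $1/(2p_1)$ of the range, and assumption $(B)$ records precisely that some of these short intervals contain no integer coprime to $n$; reconciling ``small enough'' with ``coprime to $n$'' is the crux. I expect to handle it by playing the modulo-$p_1p_3$ freedom against the modulo-$p_2$ (resp. modulo-$p_1$) freedom and by bounding $\beta$ through $b<\tfrac n2$ and $s\le 9$. Finally, exactly as in the proof of Lemma 2.10, a bounded list of sporadic prime triples (those where the window on $p_2/p_1$ admits only very small primes) will escape the generic argument and must be eliminated by direct computation, using the divisibility constraints $p_3\mid(kp_2-p_1)$ and $\gcd(n,c)\mid c$ together with $n>1000$.
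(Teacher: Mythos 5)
Your setup is sound and matches the paper's: you invoke Lemma 2.10 correctly, identify $e=p_1$, $a=kp_2$ with $k\in[1,3]$, $\{\gcd(n,b),\gcd(n,c)\}=\{p_1p_3,p_2p_3\}$, and arrive at the same six branches ($k\in\{1,2,3\}$ crossed with which of $b,c$ carries the factor $p_2$). But from that point on your plan has a genuine gap: the entire resolution of each branch is deferred. You propose to build, by CRT, a multiplier $M\le\frac{n}{2e}=\frac{p_2p_3}{2}$ coprime to $n$ with $|Ma|_n>\frac n2$ and $|Mb|_n>\frac n2$, you correctly identify that the obstruction is reconciling the size bound with coprimality, and then you write that you ``expect to handle it'' by playing residues off each other. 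That expectation is the whole content of the lemma, and it is not discharged. Note also that each of your six branches still contains infinitely many prime triples $(p_1,p_2,p_3)$, so a ``bounded collection of surviving shapes'' does not follow from your reductions alone; a witness construction would have to work uniformly in the primes, and nothing in the proposal shows it does. (The residue reductions you do prove, e.g.\ $|Ma|_n=p_2|Mk|_{p_1p_3}$, are correct but are never cashed in.)

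The idea you are missing is the one the paper's proof runs on, and it makes multiplier constructions almost entirely unnecessary. First, whichever of $b,c$ is divisible by $p_2p_3$ must equal $tp_2p_3$ with $t\in\{1,2,3\}$: otherwise $\frac ba\geq\frac{b}{3p_2}$ (or $\frac ba=\frac{c+e-a}{a}$) exceeds $10$, contradicting Lemma 2.9 ($s\le 9$), since $a=kp_2<4e$. Second, and crucially, the \emph{other} element of $\{b,c\}$ is divisible by $p_1$, and the identities $c-a=b-e$ and $b+a=c+e$ then give $p_1\mid(c-a)$ in the $p_2\mid c$ branches and $p_1\mid(b+a)$ in the $p_2\mid b$ branches. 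Each of the finitely many shapes then yields an equation such as $p_1\mid p_2(p_3-k)$, impossible because $0<p_3-k<p_1$ (here Lemma 2.10's conclusion $p_3<p_1$ is decisive), or pins the primes to relations like $p_1=2p_3-1$, $p_1=p_3+2$, leaving a short explicit list of sporadic triples; only those are finished off with concrete $(m,k)$ via Lemma 2.2(1) or by the bound $n>1000$. You exploit only the divisibility $p_3\mid(kp_2-p_1)$, which is the one already consumed in Lemma 2.10 and is too weak to close any branch. Without the $p_1$-divisibility step and the $t\le 3$ enumeration, your CRT plan faces exactly the difficulty you name --- and the paper's assumption $(B)$ discussion shows that ``short interval'' and ``coprime to $n$'' can genuinely conflict --- so the proof as proposed does not go through.
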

\begin{proof}
By Lemma 2.10, it holds that $a=kp_2$ for some $k\in[1,3]$. We distinguish three
cases according to the value of $k$.

{\bf Case 1.} $k=1$.

{\it Subcase 1.1.} $p_2|c$.

If $c=p_2p_3$, then $p_1|c-a=p_2(p_3-1)$, which implies $p_1|(p_3-1)<p_1$, a contradiction.

If $c=3p_2p_3$, then $\frac{b}{a}=\frac{c+e-a}{a}>3p_3-1\geq 14>10$, a contradiction.

If $c=2p_2p_3$, then $p_1|c-a=p_2(2p_3-1)$, which implies that $p_1=2p_3-1$. If $p_3\geq7$, we have $\frac{b}{a}=\frac{c+e-a}{a}\geq13>10$, a contradiction. If $p_3=5$, then $p_1|9$, a contradiction.

{\it Subcase 1.2.} $p_2|b$.

If $b=p_2p_3$, then $p_1|b+a=p_2(p_3+1)$, which implies $p_1=p_3+1<p_1$, a contradiction.

If $b\geq2p_2p_3$, then $\frac{b}{a}\geq10$, a contradiction.

{\bf Case 2.} $k=2$.

{\it Subcase 2.1.} $p_2|c$.

If $c=p_2p_3$, then $p_1|c-a=p_2(p_3-2)$, which implies $p_1|(p_3-2)<p_1$, a contradiction.

If $c=2p_2p_3$, then $p_1|c-a=2p_2(p_3-1)$, which implies $p_1|(p_3-1)<p_1$, a contradiction.

If $c=3p_2p_3$, then $p_1|c-a=p_2(3p_3-2)$. If $p_3\geq11$, we have $\frac{b}{a}=\frac{c+e-a}{a}>10$, a contradiction. If $p_3=5$, then $p_1=13$ and $p_2=19$, $b=c+e-a=3\times19\times 5+13-38=260$. Since $17<\frac{4n}{c}=\frac{52}{3}<18<19=\frac{4n}{b}$ and $\gcd(n,18)=1$, $18a=684<n$, let $m=18$ and $k=4$, then $\ind(S)=1$. If $p_3=7$, then $p_1=19$. However, we can't find a prime $p_2$ such that $7|(2p_2-19)$ and $19<p_2<38$.

{\it Subcase 2.2.} $p_2|b$.

If $b=p_2p_3$, then $p_1|b+a=p_2(p_3+2)$, which implies $p_1=p_3+2$. By Lemma 2.9, we have $p_3\leq19$, in further, $p_3\in\{5,11,17\}$.  If $p_3=5$, $p_1=7$, $p_2\leq13$, which contradicts to $n=p_1p_2p_3>1000$. If $p_3=11, p_1=13$, then $p_2\in\{17,19\}$, which contradicts to $p_3|(2p_2-p_1)$. If $p_3=17, p_1=19$, then $p_2\in\{23,29,31,37\}$, which also contradicts to $p_3|(2p_2-p_1)$.

If $b=2p_2p_3$, then $p_1|b+a=2p_2(p_3+1)$, which implies $p_1|p_3+1<p_1$, a contradiction.

If $b=3p_2p_3$, then $p_1|b+a=p_2(3p_3+2)$.  If $p_3\geq7$, we have $\frac{b}{a}\geq\frac{23}{2}>10$, a contradiction. If $p_3=5$, then $p_1=17$ and $p_2=31$, $c=b+a-e=510$. Since $\frac{6n}{c}=31<32<34=\frac{6n}{b}$ and $\gcd(n,32)=1$, $32a=1984<2635=n$, let $m=32$ and $k=6$, then $\ind(S)=1$.

{\bf Case 3.} $k=3$.

{\it Subcase 3.1.} $p_2|c$.

If $c=p_2p_3$, then $p_1|c-a=p_2(p_3-3)$, which implies $p_1|(p_3-2)<p_1$, a contradiction.

If $c=3p_2p_3$, then $p_1|c-a=3p_2(p_3-1)$, which implies $p_1|(p_3-1)<p_1$, a contradiction.

If $c=2p_2p_3$, then $p_1|c-a=p_2(2p_3-3)$, which implies $p_1=2p_3-3$.
If $p_3\geq17$, we have $\frac{b}{a}=\frac{c+e-a}{a}>10$, a contradiction. If $p_3=5$, then $p_1=7$ and $p_2<10$, a contradiction. If $p_3=7$, then $p_1=11$ and $p_2=13$, thus $\frac{n}{c}<6<\frac{n}{b}$ and $\gcd(n,6)=1$. Let $m=6$ and $k=1$, then $\ind(S)=1$. If $p_3=11$, then $p_1=19$, but there exists no prime $p_2$ such that $3p_2<4p_1$ and $p_3|(3p_2-p_1)$. If $p_3=13$, then $p_1=23$, there exists no prime $p_2$ such that $3p_2<4p_1$ and $p_3|(3p_2-p_1)$.
If $p_3=17$, which implies $p_1|49$ hence $p_1=7<p_3$, a contradiction..

{\it Subcase 3.2.} $p_2|b$.

If $b=p_2p_3$, then $p_1|b+a=p_2(p_3+3)$, which implies $2p_1|p_3+3$, a contradiction.
If $b=3p_2p_3$, then $p_1|b+a=3p_2(p_3+1)$, which implies $p_1|p_3+1<p_1$, a contradiction.

If $b=2p_2p_3$, then $p_1|b+a=p_2(2p_3+3)$.  If $p_3\geq17$, we have $\frac{b}{a}\geq\frac{37}{3}>10$, a contradiction. If $p_3=5$, then $p_1=13$ and $p_2=17$, which contradicts to $p_3|(3p_2-p_1)$. If $p_3=7$, then $p_1=17$ and there exists no suitable $p_2$. If $p_3=11$, we can't find suitable $p_1$. If $p_3=13$, then $p_1=29$, we can't find suitable $p_2$.\end{proof}

\begin{lemma} If  $a<2e$ and $a|b$, then $\ind(S)=1$.\end{lemma}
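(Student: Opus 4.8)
The plan is to first compress the hypotheses into a rigid arithmetic normal form and then to verify sufficient condition~(1) of the lemma stated just after Proposition~2.1 by producing one explicit multiplier.

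\emph{Reduction.} With $e=p_1$ fixed, I would observe that $\gcd(n,a)\in\{p_2,p_1p_3,p_2p_3\}$ while $\gcd(n,a)\le a<2e=2p_1$; since $p_1p_3$ and $p_2p_3$ both exceed $5p_1>2p_1$, this forces $\gcd(n,a)=p_2$, and then $a$ being a multiple of $p_2$ in $[p_2,2p_1)$ gives $a=p_2$ and $p_1<p_2<2p_1$. The hypothesis $a\mid b$ says $p_2\mid b$, so $p_2\mid\gcd(n,b)$, which leaves $\gcd(n,b)=p_2p_3$ and $\gcd(n,c)=p_1p_3$. Writing $b=p_2p_3\,j$ with $p_1\nmid j$ and combining $c-b=a-e=p_2-p_1$ with $p_3\mid b$, $p_3\mid c$, I obtain $p_3\mid(p_2-p_1)$, hence $p_2-p_1\ge p_3\ge 5$.

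\emph{Construction of the multiplier.} Because $b=p_2p_3\,j$ we have $|mb|_n=p_2p_3\,|mj|_{p_1}$ for every $m$, and this attains its maximum $p_2p_3(p_1-1)$ precisely when $m\equiv -j^{-1}\pmod{p_1}$ (here $p_1\nmid j$ is used). For such $m$ set $k=\lceil mb/n\rceil$; then $kn=mb+(n-|mb|_n)=mb+p_2p_3$, so $kn\le mc=mb+m(p_2-p_1)$ holds exactly when $m\ge T:=p_2p_3/(p_2-p_1)$, and $T\le p_2$ since $p_2-p_1\ge p_3$. I would then run over the $p_3$ representatives $m=m_0+tp_1$, $t=0,\dots,p_3-1$, of the class $-j^{-1}$ lying in $[1,p_1p_3)$ (so $ma=mp_2<n$ is automatic) and discard the bad ones: at most one is divisible by $p_3$, at most one by $p_2$ (the $p_3<p_2$ values $m_0+tp_1$ are distinct mod $p_2$), and at most two fall below $T\le p_2<2p_1$ (only $t=0,1$). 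As $p_3\ge 5>4$, some $m$ survives; it is coprime to $n$, satisfies $m<p_1p_3$ and $m\ge T$, and a short check gives $1\le k\le b$. Thus $\frac{kn}{c}\le m\le\frac{kn}{b}$ with $\gcd(m,n)=1$, $1\le k\le b$ and $ma<n$, so sufficient condition~(1) yields $\ind(S)=1$.

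I expect the counting to be the only real obstacle: everything hinges on the class $m\equiv -j^{-1}\pmod{p_1}$ containing, inside the window $[1,p_1p_3)$, a representative that is simultaneously coprime to $n$ and $\ge T$. The inequality $p_2-p_1\ge p_3$ (equivalently $T\le p_2$) extracted in the reduction is exactly what keeps the threshold below $2p_1$, so that only the first two representatives can be too small; combined with $p_3\ge 5$ it makes the union bound $1+1+2<p_3$ succeed. I would take particular care with the boundary bookkeeping — the integrality of $k$, the bound $k\le b$, and the strict inequality $ma<n$ — since these are the places where an off-by-one could most easily slip in.
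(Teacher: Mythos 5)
Your proof is correct, but it takes a genuinely different route from the paper's. You verify the window criterion, Lemma 2.2(1): after pinning down the full (A1) structure ($a=p_2$ with $p_1<p_2<2p_1$, $\gcd(n,b)=p_2p_3$, $\gcd(n,c)=p_1p_3$, hence $p_3\mid p_2-p_1$), you work in the residue class $m\equiv -j^{-1}\pmod{p_1}$ where $b=p_2p_3j$, so that $|mb|_n=n-p_2p_3$ and $k=\lceil mb/n\rceil$ gives $kn=mb+p_2p_3$; then $\frac{kn}{c}\le m$ reduces to the threshold $m\ge T=\frac{p_2p_3}{p_2-p_1}\le p_2$, and your union bound (at most one multiple of $p_3$, at most one of $p_2$, at most two representatives below $T$ since $T\le p_2<2p_1$, against $p_3\ge5$ candidates in $[1,p_1p_3)$) leaves a valid $m$; the bookkeeping $1\le k\le\lceil b/p_2\rceil\le b$ and $ma=mp_2<n$ checks out. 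The paper argues instead via the remark's sufficient condition (2) (at most one residue in $[1,\frac{n}{2}]$): since $a=p_2$ divides $n$ and $a\mid b$, the explicit multipliers $m_j=\frac{n+ja}{a}=p_1p_3+j$, $j=1,2,3,4$, satisfy $m_j(n-a)\equiv -ja$ and $m_j(n-b)\equiv -jb\pmod n$, while $|m_je|_n>\frac{n}{2}$ precisely because $a<2e$; one then selects $j$ according to whether $\gcd(n,p_1p_3+1)=1$ and to the size of $b$ (relative to $\frac{n}{4}$ and $\frac{n}{3}$) so that three of the four residues exceed $\frac{n}{2}$. The trade-off: the paper's proof is shorter and uses only $a=p_2\mid n$ and $a\mid b$ (none of the remaining gcd pattern), at the cost of a small case split on $b$; yours is a single uniform construction with no case analysis, but it leans on the finer (A1) data --- notably $p_3\mid p_2-p_1$, which is exactly what keeps your threshold $T$ below $p_2$ --- and on Lemma 2.2(1) rather than the half-interval criterion.
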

\begin{proof}
Let $m=\frac{n+a}{a}$,$m_1=\frac{n+2a}{a}$, $m_2=\frac{n+3a}{a}$, $m_3=\frac{n+4a}{a}$.

If $\gcd(n,m)=1$ then
\beqs |me|_n>\frac{n}{2}, |m(n-a)|_n=n-a>\frac{n}{2},|m(n-b)|_n=n-b>\frac{n}{2},\eeqs
thus $\ind(S)=1$.

Next assume that $\gcd(n,m)>1$. Then $\gcd(n,m_1)=\gcd(n,m_2)=\gcd(n,m_3)=1$. Moreover,
\beqs |m_1e|_n>\frac{n}{2}, |m_2e|_n>\frac{n}{2},|m_3e|_n>\frac{n}{2},|m_1a|_n<\frac{n}{2},|m_2a|_n<\frac{n}{2}, |m_3a|_n<\frac{n}{2}.\eeqs

If $b<\frac{n}{4}$, we have $|m_1(n-b)|_n=n-2b>\frac{n}{2}$. If $\frac{n}{4}<b<\frac{n}{3}$, we have $|m_3(n-b)|_n=2n-4b>\frac{n}{2}$. If $\frac{n}{3}<b<\frac{n}{2}$, we have $|m_2(n-b)|_n=2n-3b>\frac{n}{2}$.
Then we can find an integer $m_i$ such that $\gcd(n,m_i)=1$ and all of $|m_ie|_n, |m_i(n-b)|_n, |m_i(n-a)|_n$ are larger than $\frac{n}{2}$, which implies that $\ind(S)=1$.
\end{proof}

{\noindent\it Renumbering the sequence:}

Now we begin to renumber the sequence such that $e<\frac{a}{4}$. For this purpose, by Lemma 2.10, 2.11 and Lemma 2.12, we can assume that $a=p_2<2e$ and $p_2|c$.

\begin{lemma}If $a=p_2<2e$ and $a|c$, then $\ind(S)=1$ or the sequence $S$ can be renumbered as
$$(e'g)\cdot(c'g)\cdot((n-b')g)\cdot((n-a')g)$$
such that $e'<a'\leq b'<c$ and $a\geq 10e'$. Moreover, $e'=p_2$ or $e'=2p_2$.\end{lemma}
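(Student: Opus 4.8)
=== BEGIN PROOF PROPOSAL ===

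\textbf{Setup and goal.} We are in the situation $a = p_2 < 2e$ with $p_2 \mid c$, and recall the standing constraints $e + c = a + b$, $e < a \leq b$, and the divisibility pattern $\{\gcd(n,e),\gcd(n,c),\gcd(n,b),\gcd(n,a)\} = \{p_1,p_2,p_1p_3,p_2p_3\}$ with $e = p_1$. Since $a = p_2$ and $p_2 \mid c$, the remaining two gcd-values $p_1 p_3$ and $p_2 p_3$ must be distributed among $b$ and $c$; because $p_2 \mid c$ and $\gcd(n,e) = p_1$, a short case check on which of $b,c$ carries the factor $p_3$ pins down $\gcd(n,c) \in \{p_2, p_2 p_3\}$ and forces $\gcd(n,b) \in \{p_1 p_3, p_2 p_3\}$ accordingly. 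The plan is to first try to apply the sufficient conditions (Lemma 2.2, especially part (1) with a well-chosen multiplier $m$) to conclude $\ind(S) = 1$ outright; failing that, to produce the renumbering.

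\textbf{Step 1: attempt $\ind(S)=1$ directly.} Write $c = k p_2$ for the appropriate multiple (so $c = k p_2 p_3$ or $c = k p_2$ depending on the gcd split), and examine the interval $[\tfrac{kn}{c}, \tfrac{kn}{b}]$ from Lemma 2.2(1). Since $a = p_2$ is small relative to $n$, the condition $ma < n$ is easy to satisfy for moderate $m$; the real content is finding $m$ coprime to $n$ inside $[\tfrac{kn}{c}, \tfrac{kn}{b}]$ with $1 \leq k \leq b$. I would run through the small admissible values of $k$ (as in the proof of Lemma 2.11), and in each subcase either exhibit such an $m$ — giving $\ind(S) = 1$ — or show the interval is too short and contains no coprime integer, which then feeds into Step 2.

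\textbf{Step 2: the renumbering when no good multiplier exists.} If Step 1 fails, the obstruction is that $a = p_2$ is \emph{not} small compared to $e = p_1$, i.e. $a < 10e$ is exactly the bad case the renumbering is meant to fix. The idea is to pass to a new generator $g' = mg$ for a carefully chosen $m$ coprime to $n$, replacing $S$ by the equivalent sequence $(|me|_n g') \cdot (|mc|_n g') \cdots$ and then re-sorting and re-centering the four coefficients exactly as in the preliminary normalization (the passage producing $x_1 = e$, $e + x_2 + x_3 + x_4 = 2n$, $e < x_2 < n/2 < x_3 \leq x_4$). The claim ``$e' = p_2$ or $e' = 2p_2$'' signals that the new smallest coefficient should be built from the old $a = p_2$: I would take $m$ so that $|ma|_n$ becomes the new minimal term $e'$, which makes $e' \in \{p_2, 2p_2\}$ after reduction, and then verify that the new configuration satisfies $e' < a' \leq b' < c$ together with the gain $a \geq 10 e'$. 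The inequality $a \geq 10 e'$ is what lets all subsequent lemmas (which assume $a > 2e$, and ultimately the $a \geq 10e$ regime via Lemma 2.9's bound $s \leq 9$) apply to the renumbered sequence.

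\textbf{Main obstacle.} The hard part will be Step 2: one must check that multiplying by $m$ and re-normalizing genuinely yields a sequence of the \emph{same} index-1 question (i.e. the renumbered sequence is again a minimal zero-sum sequence with the same gcd-pattern $\{p_1,p_2,p_1p_3,p_2p_3\}$), and that the bookkeeping on the four reduced coefficients simultaneously delivers both the ordering $e' < a' \leq b' < c$ and the quantitative bound $a \geq 10e'$. In particular one must rule out the borderline cases where $|ma|_n$ does not reduce to $p_2$ or $2p_2$; this likely requires using $p_2 < 2e = 2p_1$ together with $n > 1000$ to force the primes apart enough that the reduction lands in the claimed range. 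I expect the argument to split into the subcases $c = p_2$-type versus $c = p_2 p_3$-type, paralleling the subcase structure in the proof of Lemma 2.11.

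=== END PROOF PROPOSAL ===
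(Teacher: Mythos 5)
Your proposal has a genuine gap: it never constructs the multiplier that does the work. The entire content of this lemma is an explicit exploitation of the two divisibility hypotheses $a=p_2\mid n$ and $a\mid c$: the paper takes $m=\frac{n-a}{a}$ (an integer precisely because $a\mid n$), for which one can compute exactly $|m(n-a)|_n=a$ and $|mc|_n=n-c$ (the latter because $a\mid c$), together with $\frac{n}{2}<|me|_n<n-10a$. From this, if $\gcd(n,m)=1$, either three of the four residues exceed $\frac{n}{2}$ (so $\ind(S)=1$ by sufficient condition (2) stated after Proposition 2.1 — not by the interval criterion of Lemma 2.2(1)), or the multiplied sequence itself is the renumbering, with $e'=a=p_2$ and $a'\geq 10e'$ checkable directly. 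If $\gcd(n,m)>1$, the paper switches to the further explicit integers $m_1=\frac{n-2a}{a}$, $m'=\frac{n+a}{2a}$, $m_2=\frac{n+3a}{2a}$, $m_3=\frac{n+5a}{2a}$ (all coprime to $n$ in that case), splitting on whether $\frac{c}{a}$ is even or odd and on the size of $c$, and obtains either $\ind(S)=1$ or a renumbering with $e'=2a=2p_2$ — which is exactly where the alternative ``$e'=p_2$ or $e'=2p_2$'' comes from. Your Step 2 defers precisely this construction to ``a carefully chosen $m$'' with the property that $|ma|_n\in\{p_2,2p_2\}$; that is a restatement of the goal, not an argument, and nothing in your sketch explains why such an $m$ exists or how to verify $e'<a'\leq b'<c$ and $a'\geq 10e'$ for it.

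Your Step 1 is also misdirected. Searching the intervals $\left[\frac{kn}{c},\frac{kn}{b}\right]$ of Lemma 2.2(1) is unpromising in this regime: their length is $\frac{kn(c-b)}{bc}=\frac{kn(a-e)}{bc}$, and here $a-e<e$ is small (that is exactly the degenerate situation the renumbering is designed to escape), while the constraint $ma<n$ caps $k$; the paper makes no use of Lemma 2.2(1) in this proof and instead relies throughout on the ``three residues on the same side of $\frac{n}{2}$'' criterion, which the exact computations above make checkable. So while your overall dichotomy (either conclude $\ind(S)=1$ or produce a renumbering with new smallest term built from $a$) matches the shape of the paper's proof, the proposal as written could not be completed without supplying the missing central idea.
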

\begin{proof}

Let $m=\frac{n-a}{a}$, $m_1=\frac{n-2a}{a}$, $m_2=\frac{n+3a}{2a}$, $m_3=\frac{n+5a}{2a}$.

If $\gcd(n,m)=1$, then $\frac{n}{2}<|me|_n<n-10a$ and $|mc|_n=n-c>\frac{n}{2}$. For this case, if $|m(n-b)|_n>\frac{n}{2}$, we have $\ind(S)=1$. Otherwise, it must hold $a<|m(n-b)|_n$. We get a renumbering:
\beq e'=a, c'=|m(n-b)|_n, \{b',a'\}=\{c,n-|me|_n\},\eeq
and it is easy to check that $a'\geq10e'$.

Next we assume that $\gcd(m,n)>1$, then $p_2|(p_1p_3-1)$ and $\gcd(n,m_1)=\gcd(n,m_2)=\gcd(n,m_3)=1$.

If $c=2ta$ for some integer $t$. Let $m'=\frac{n+a}{2a}$. Then $\gcd(n,m')=1$ and $|m'e|_n<\frac{n}{2}$, $|m'c|_n=\frac{c}{2}<\frac{n}{2}$, $|m'(n-a)|_n=\frac{n-a}{2}<\frac{n}{2}$, and $\ind(S)=1$.

If $c=(2t+1)a$ for some integer $t$. We distinguish three
cases according to the value of $\frac{n}{c}$.

{\bf Case 1.} $\frac{n}{4}>c$.  Replace $m$ by $m_1$ and repeat the above process, we have $|m_1(n-b)|_n>\frac{n}{2}$,
$|m_1c|_n>\frac{n}{2}$ and $|m_1e|_n>\frac{n}{2}$, which implies $\ind(S)=1$, or we can obtain a renumbering:
\beq e'=2a, c'=|m_1(n-b)|_n, \{b',a'\}=\{2c,n-|m_1e|_n\},\eeq
it also holds that $a'\geq 10e'$.

{\bf Case 2.} $\frac{n}{4}<c<\frac{n}{3}$. Then $|m_3(n-a)|_n=\frac{n-5a}{2}<\frac{n}{2}$. We have $|m_3e|_n<\frac{n}{2}$ and $|m_3c|_n=|\frac{n+5c}{2}|_n<\frac{n}{2}$, exactly it belongs to $(\frac{n}{8},\frac{n}{3})$. Then $\ind(S)=1$.

{\bf Case 3.}  $\frac{n}{3}<c$. Then $|m_2(n-a)|_n=\frac{n-3a}{2}<\frac{n}{2}$. We have $|m_2c|_n=|\frac{n+3c}{2}|_n<\frac{n}{4}$, $|m_2e|_n<\frac{n}{2}$, and hence $\ind(S)=1$.
\end{proof}

Through the process of renumbering, we can always assume that $e\in\{p_1, p_2, 2p_2\}$ and $a>4e$. Particularly, $a\geq 10e$ when $e\in\{p_2, 2p_2\}$. Hence we also assume that $s\leq 9$ by Lemma 2.9.

Let $k_1$ be the largest positive integer such that $\lceil\frac{(k_1-1)n}{c}\rceil=\lceil\frac{(k_1-1)n}{b}\rceil$ and $\frac{k_1n}{c}\leq m<\frac{k_1n}{b}$ for some integer $m$. Since $\frac{bn}{c}\leq n-1<n=\frac{bn}{b}$ and $\frac{tn}{b}-\frac{tn}{c}=\frac{t(c-b)n}{bc}>2$ for all $t\geq b$, such integer $k_1$ always exists and $k_1\leq b$.

As mentioned above, we only need prove Proposition 2.1. We now show that Proposition 2.1 holds through the following 3 propositions.

\begin{proposition} Suppose $\lceil\frac{n}{c}\rceil<\lceil\frac{n}{b}\rceil$, then Proposition 2.1 holds. \end{proposition}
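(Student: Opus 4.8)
The plan is to reduce everything to the sufficient condition recorded in Lemma 2.2(1) with $k=1$. The hypothesis $\lceil\frac{n}{c}\rceil<\lceil\frac{n}{b}\rceil$ is exactly the assertion that the half-open interval $[\frac{n}{c},\frac{n}{b})$ contains an integer (equivalently, it is the case $k_1=1$ in the definition of $k_1$). So I would fix $m_0=\lceil\frac{n}{c}\rceil$, note $\frac{n}{c}\le m_0<\frac{n}{b}$, and observe that the whole proposition rests on producing \emph{one} integer $m$ in $[\frac{n}{c},\frac{n}{b})$ with $\gcd(m,n)=1$: for any such $m$ the remaining hypothesis $ma<n$ of Lemma 2.2(1) is automatic, since $m<\frac{n}{b}$ and $a\le b$ give $ma\le mb<n$, while $1\le k=1\le b$ is trivial. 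Lemma 2.2(1) then gives $\ind(S)=1$.

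Before addressing coprimality I would record \emph{why} such an $m$ works, as this also shows that $[\frac{n}{c},\frac{n}{b})$ is the only place a good multiplier can live. For $m\in[\frac{n}{c},\frac{n}{b})$ coprime to $n$ the four residues are forced: since $e<a\le b$ one gets $me<n$, so $|me|_n=me$; since $m<\frac{n}{b}$ one gets $mb,ma<n$, so $|m(n-b)|_n=n-mb$ and $|m(n-a)|_n=n-ma$; and since $c=a+b-e<2b$, together with $m\ge\frac{n}{c}$ (indeed $mc>n$, as $\gcd(m,n)=1$ rules out $mc=n$), one gets $n<mc<2n$, so $|mc|_n=mc-n$. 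Adding these and using the defining relation $e+c=a+b$,
\[
|me|_n+|mc|_n+|m(n-b)|_n+|m(n-a)|_n=n+m(e+c-a-b)=n,
\]
so $\ind(S)=1$. This is of course just the content of Lemma 2.2(1); I include it to make the rigidity of the admissible interval transparent.

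The hard part will be the coprimality: the hypothesis guarantees an integer in $[\frac{n}{c},\frac{n}{b})$, not a coprime one. I would split on the number of integers in that interval, whose length is $\frac{n}{b}-\frac{n}{c}=\frac{n(a-e)}{bc}$. Because every prime dividing $n=p_1p_2p_3$ is at least $5$ (as $\gcd(n,6)=1$), each $p_i$ divides at most one of any four consecutive integers, so \emph{among any four consecutive integers at least one is coprime to $n$}. Hence whenever $[\frac{n}{c},\frac{n}{b})$ is long enough to contain four consecutive integers, a coprime multiplier exists and we finish as above. The genuinely delicate situation is a short interval whose one, two, or three integers are \emph{all} divisible by $p_1$, $p_2$ or $p_3$, and this is where I expect essentially all the effort to go.

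To dispose of the short-interval case I would bring in the structural information already assembled. When $s$ is not too small, Lemmas 2.5--2.8 bound $\frac{n}{2b}$, hence $\frac{n}{b}$, hence every candidate $m$, to a small range; since $2,3,4,6$ are automatically coprime to $n$, the only obstructions are $m\in\{5,7\}$ with $5\mid n$ or $7\mid n$. These few possibilities, together with $e\in\{p_1,p_2,2p_2\}$, $a>4e$, $s\le 9$, and the exact pattern $\{\gcd(n,e),\gcd(n,c),\gcd(n,b),\gcd(n,a)\}=\{p_1,p_2,p_1p_3,p_2p_3\}$, are then to be eliminated one at a time by the arithmetic already used in Lemmas 2.10--2.11 (reading off which prime divides which of $e,c,b,a$ and contradicting either $e+c=a+b$ or the size constraints), and the remaining small-$s$ configurations handled directly from the divisibility pattern or, failing that, by exhibiting an alternative coprime multiplier through Lemma 2.2(2). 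The main obstacle is thus not the index computation, which is immediate, but this bookkeeping over the admissible small primes.
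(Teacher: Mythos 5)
Your reduction to Lemma 2.2(1) with $k=1$ is correct as far as it goes: for $m\in[\frac{n}{c},\frac{n}{b})$ coprime to $n$ the condition $ma<n$ is indeed automatic, your residue computation is right, and the observation that among any four consecutive integers one must be coprime to $n$ (since every prime factor of $n$ is at least $5$) is exactly the pigeonhole the paper also uses. This much reproduces the opening of the paper's Section 3.

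The gap is that everything after that point --- which is where essentially all of the content of the paper's proof lies --- is left as a plan rather than a proof. The genuinely hard configuration is when $[\frac{n}{c},\frac{n}{b})$ contains exactly one integer $m_1$ with $\gcd(m_1,n)>1$ (typically $m_1=5$ with $5\mid n$, or $m_1=7$ with $7\mid n$). In that situation there is \emph{no} admissible $k=1$ multiplier at all, so your framework, restricted to $k=1$, cannot close the case; your remark that $[\frac{n}{c},\frac{n}{b})$ is ``the only place a good multiplier can live'' is misleading, since Lemma 2.2(1) permits $k>1$ and the paper's proof lives there. The paper takes the least $l$ with $[\frac{ln}{c},\frac{ln}{b})$ containing at least four integers, so that a coprime $m$ exists, but then $ma<n$ is no longer free: it is verified via $m_1\geq 10$ when $m_1\neq 5,7$ (Lemma 3.2), while the cases $m_1=5$ and $m_1=7$ each take a long explicit analysis (Lemmas 3.3 and 3.4) alternating between Lemma 2.2(1) with $k$ ranging up to $8$, hand-picked multipliers $M\in\{12,16,18,24,26,27,36,44,\dots\}$ fed into the sufficient conditions of the Notice, and contradictions with $s\leq 9$; even the two-or-three-integer case is not routine (Lemma 3.1 needs $k=2,3$ and estimates such as $ma<\frac{4m}{3}(c-b)$). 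Your sketch of ``eliminating the possibilities one at a time'' names the right ingredients but does none of this work. Moreover, your appeal to Lemmas 2.5--2.8 to bound $\frac{n}{b}$ (hence to conclude the obstructions are only $m\in\{5,7\}$) silently requires their hypotheses ($a>2e$, $s\geq 2$, and assumption (B)); the case $s=1$ is outside their scope and is not addressed. As it stands, the proposal is a correct setup plus an accurate diagnosis of where the difficulty sits, but not a proof of the statement.
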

\begin{proposition} Suppose $\lceil\frac{n}{c}\rceil=\lceil\frac{n}{b}\rceil$. Let $k_1$ be the largest positive integer such that $\lceil\frac{(k_1-1)n}{c}\rceil=\lceil\frac{(k_1-1)n}{b}\rceil$ and $\frac{k_1n}{c}\leq  m_1 < \frac{k_1n}{b}$ holds for some integer $m_1$. If $k_1>\frac{b}{a}$, then Proposition 2.1 holds. \end{proposition}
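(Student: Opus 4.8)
The plan is to meet the hypotheses of part (1) of Lemma 2.2 — a pair $(k,m)$ with $\frac{kn}{c}\le m\le\frac{kn}{b}$, $\gcd(m,n)=1$, $1\le k\le b$ and $ma<n$ — or else to produce a single multiplier $m$ placing at most one residue $|mx_i|_n$ in the upper half, so that the second sufficient condition listed after Proposition 2.1 applies. Write $I_k=[\frac{kn}{c},\frac{kn}{b})$ and let $m_1=\lceil\frac{k_1n}{c}\rceil$ be the least integer of $I_{k_1}$, which is nonempty by the definition of $k_1$. Two facts drive the argument. First, $k_1n\le m_1c<k_1n+c$ together with $c<\frac n2$ and $\gcd(m_1,n)=1$ give $|m_1c|_n=m_1c-k_1n\in[1,c)\subset[1,\frac n2)$, so the $c$-term always lies in the lower half. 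Second, if $m_1a<n$ then (once coprimality of $m_1$ is arranged) $(k,m)=(k_1,m_1)$ verifies all of part (1) of Lemma 2.2 and we are done immediately; the point of the hypothesis $k_1>\frac ba$ is exactly that it forces $\frac{k_1n}{b}>\frac na$, so $m_1a<n$ may now fail.

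The decisive step is that the hypotheses severely restrict $s=\lfloor\frac ba\rfloor$. By the definition of $k_1$ the interval $I_{k_1-1}$ contains no integer; since any half-open interval of length at least $1$ contains an integer, the emptiness of $I_{k_1-1}$ forces its length $(k_1-1)\frac{n(a-e)}{bc}$ to be less than $1$, i.e. $k_1-1<\frac{bc}{n(a-e)}$. Using $c<\frac n2$ this gives $k_1-1<\frac{b}{2(a-e)}$, and combining with $k_1>\frac ba$ we obtain $\frac ba-1<\frac{b}{2(a-e)}$, that is $2(a-e)(b-a)<ab$. After the renumbering we have $a>4e$, hence $a-e>\frac34 a$; substituting yields $\frac32 a(b-a)<ab$, so that $b<3a$. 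Thus the present case can occur only when $s\in\{1,2\}$.

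It then remains to settle the two bounded cases $s=1$ and $s=2$ (equivalently $a\le b<3a$, whence $c=a+b-e<4a$), in which $a,b,c$ all have the same order of magnitude. Here I would finish exactly as in Lemmas 2.10--2.13: run through the admissible divisibility patterns $\{\gcd(n,e),\gcd(n,c),\gcd(n,b),\gcd(n,a)\}=\{p_1,p_2,p_1p_3,p_2p_3\}$ together with $e\in\{p_1,p_2,2p_2\}$, and in each pattern either exhibit a small explicit multiplier $m$ (respectively a pair $(k,m)$) making condition (2) (respectively part (1) of Lemma 2.2) applicable, or derive a numerical contradiction from $b<3a$, $n>1000$ and $\gcd(n,6)=1$. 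Coprimality of the chosen multiplier would be handled by observing that the relevant short intervals meet only a bounded number of integers (the counting in Lemmas 2.5--2.8) while $p_1,p_2,p_3$ are large, so that at most finitely many exceptional triples $(p_1,p_2,p_3)$ remain to be checked by hand.

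The conceptual heart of the proof is the length estimate $k_1-1<\frac{bc}{n(a-e)}$ forcing $b<3a$; once this is in place the problem becomes finite in flavour. I expect the main obstacle to be the borderline value $s=2$, where the inequality $2(a-e)(b-a)<ab$ is nearly tight, together with the bookkeeping needed to secure a multiplier that is simultaneously coprime to $n$ and respects the boundary estimates $m_1a\approx n$ and $|m_1b|_n\approx\frac n2$ — which is precisely the situation the counting lemmas and assumption $(B)$ are designed to control.
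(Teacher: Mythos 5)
Your opening reduction is correct and is essentially the paper's first step (Lemma 4.1): the emptiness of $[\frac{(k_1-1)n}{c},\frac{(k_1-1)n}{b})$ forces its length $(k_1-1)\frac{(a-e)n}{bc}$ below $1$, and together with $c<\frac{n}{2}$, $a-e>\frac{3}{4}a$ and $k_1>\frac{b}{a}$ this bounds everything. But you stop at $b<3a$, whereas your own inequalities give more: if $2a\le b$ then $k_1>\frac{b}{a}\ge 2$ forces $k_1\ge 3$, while $k_1-1<\frac{b}{2(a-e)}<\frac{2b}{3a}<2$, a contradiction. So in fact $k_1=2$ and $b<2a$, i.e.\ $s=1$; the ``delicate borderline case $s=2$'' that you single out as the main obstacle is vacuous, and the paper's Lemma 4.1 records exactly this sharper conclusion.

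The genuine gap is in how you finish. Deferring to ``the divisibility patterns as in Lemmas 2.10--2.13'' does not work: those lemmas live in the regime $a<4e$, where $a$ is forced to equal $kp_2$ with $k\in[1,3]$ and hypotheses such as $p_2\mid c$ or $a\mid b$ are available; here, after renumbering, $a>4e$ and none of that structure exists. Nor is the remaining problem ``finite in flavour'': the constraints $b<3a$, $n>1000$, $\gcd(n,6)=1$ still admit infinitely many configurations $(e,a,b,c,n)$, so no check of finitely many exceptional prime triples can close the argument. What actually completes the proof (the paper's Lemma 4.2) is positional rather than arithmetic: with $b<2a$ and no integer in $[\frac{n}{c},\frac{n}{b})$, one first rules out $\frac{n}{c}>3$, since then the interval would have length $\frac{(a-e)n}{bc}>\frac{3a}{4}\cdot\frac{3}{2a}>1$; if $\frac{n}{c}\le 3<\frac{n}{b}$ one takes $m=3$ and computes $|3e|_n+|3c|_n+|3(n-b)|_n+|3(n-a)|_n=3e+(3c-n)+(n-3b)+(n-3a)=n$; and in the remaining case $\frac{n}{3}<b<c<\frac{n}{2}$ one takes $m=6$, for which the four residues sum to $3n$. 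Both multipliers invoke the sufficient condition (1) stated after Proposition 2.1, and both are automatically coprime to $n$ because $\gcd(n,6)=1$, so the coprimality bookkeeping you anticipate never arises. Your proposal is missing precisely this construction, which is the substance of the proposition.
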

\begin{proposition} Suppose $\lceil\frac{n}{c}\rceil=\lceil\frac{n}{b}\rceil$. Let $k_1$ be the largest positive integer such that $\lceil\frac{(k_1-1)n}{c}\rceil=\lceil\frac{(k_1-1)n}{b}\rceil$ and $\frac{k_1n}{c}\leq  m_1 < \frac{k_1n}{b}$ holds for some integer $m_1$. If $k_1\leq\frac{b}{a}$, then Proposition 2.1 holds. \end{proposition}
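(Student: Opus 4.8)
The plan is to observe that, in the situation of this proposition, the pair $(k_1,m_1)$ satisfies almost verbatim the hypotheses of Lemma 2.3. Indeed $m_1$ is an integer with $\frac{k_1n}{c}\le m_1<\frac{k_1n}{b}$, we already know $1\le k_1\le b$, and the standing hypothesis $k_1\le\frac{b}{a}$ is exactly the requirement $a\le\frac{b}{k_1}$ of Lemma 2.3. Thus the only ingredient that could fail is the coprimality $\gcd(m_1,n)=1$. Accordingly I would split the argument into a main case, in which some admissible multiplier is coprime to $n$ and Lemma 2.3 finishes at once, and a degenerate case, in which every candidate multiplier shares a prime factor with $n$.

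In the main case, if $\gcd(m_1,n)=1$ I apply Lemma 2.3 with $k=k_1$ and $m=m_1$: since $a\le\frac{b}{k_1}$ and $m_1<\frac{k_1n}{b}$ we get $m_1a<n$, and the telescoping identity $|m_1e|_n+|m_1c|_n+|m_1(n-b)|_n+|m_1(n-a)|_n=n$ (which rests on $e+c=a+b$) yields $\ind(S)=1$ immediately. More generally, any integer $m$ in the closed interval $[\frac{k_1n}{c},\frac{k_1n}{b}]$ with $\gcd(m,n)=1$ still satisfies $ma<n$, so by Lemma 2.2(1) it suffices to exhibit one coprime integer in this interval.

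The whole difficulty therefore lies in the degenerate case $\gcd(m_1,n)>1$. The first step I would take is to show that the interval $I_{k_1}=[\frac{k_1n}{c},\frac{k_1n}{b})$ is short. Since this proposition assumes $\lceil\frac{n}{c}\rceil=\lceil\frac{n}{b}\rceil$, the interval $I_1$ contains no integer, whence $k_1\ge2$; and the defining property $\lceil\frac{(k_1-1)n}{c}\rceil=\lceil\frac{(k_1-1)n}{b}\rceil$ says that $I_{k_1-1}$ contains no integer, hence has length $<1$. As the length of $I_k$ is proportional to $k$, the interval $I_{k_1}$ has length $<\frac{k_1}{k_1-1}\le2$ and so contains at most two integers. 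If one of them is coprime to $n$, I replace $m_1$ by it and conclude as above. Otherwise both integers are divisible by primes among $p_1,p_2,p_3$, and I would combine the pattern $\{\gcd(n,e),\gcd(n,c),\gcd(n,b),\gcd(n,a)\}=\{p_1,p_2,p_1p_3,p_2p_3\}$ with the standing assumption $(B)$, the counting Lemmas 2.5--2.8 (which bound $\frac{n}{2b}$ and the number of integers in the relevant intervals), and the bound $s\le9$ of Lemma 2.9; together with $a>4e$ from the renumbering, these constraints should force $(p_1,p_2,p_3)$ into a short explicit list.

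The step I expect to be the main obstacle is precisely this last one: excluding or resolving the configurations in which both integers of the short interval $I_{k_1}$ are non-coprime to $n$. I would handle the surviving cases exactly in the spirit of Lemmas 2.10--2.12, producing for each admissible prime triple a concrete multiplier $M\le\frac{n}{2e}$ with $\gcd(M,n)=1$ for which at least two of $|Ma|_n>\frac{n}{2}$, $|Mb|_n>\frac{n}{2}$, $|Mc|_n<\frac{n}{2}$ hold, so that Lemma 2.2(2) gives $\ind(S)=1$. The delicate point is to verify that the divisibility and size constraints (sharpened by $n>1000$) really do leave only finitely many small-prime cases, since the clean uniform argument via Lemma 2.3 fails exactly when no coprime multiplier lives in $I_{k_1}$.
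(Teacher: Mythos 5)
Your opening moves are correct and coincide with the paper's implicit starting point: with $k=k_1$, $m=m_1$, the hypothesis $k_1\le\frac{b}{a}$ is exactly $a\le\frac{b}{k_1}$, so Lemma 2.3 finishes whenever $\gcd(m_1,n)=1$; and your observation that the interval $[\frac{(k_1-1)n}{c},\frac{(k_1-1)n}{b})$ contains no integer, hence $[\frac{k_1n}{c},\frac{k_1n}{b})$ has length $<\frac{k_1}{k_1-1}\le2$ and contains at most two integers, is also sound. But everything after that is a declaration of intent, and it is precisely there that the entire content of the paper's Section 5 lives. The concrete gap: knowing that the at most two integers of $[\frac{k_1n}{c},\frac{k_1n}{b})$ each share a prime factor with $n$ does \emph{not} force $(p_1,p_2,p_3)$ into a short explicit list. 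Two consecutive bad integers $m,m+1$ only tell you that two of the three primes divide two specific (possibly large) numbers, which is compatible with infinitely many prime triples; combining this with assumption (B), Lemmas 2.5--2.9 and $a>4e$ does not change that by itself. To obtain finiteness one must harvest candidate multipliers from \emph{other} intervals $[\frac{kn}{c},\frac{kn}{b})$ with $k\ne k_1$, where the condition $a\le\frac{b}{k}$ of Lemma 2.3 may fail and must be replaced by the condition $ma<n$ of Lemma 2.2(1) (requiring case-by-case upper bounds on $a$); only when three or more pairwise coprime candidates are all non-coprime to $n$ does $n$ get pinned down to explicit values such as $5\times17\times19$, which can then be eliminated using $n>1000$, $s\le9$, or a hand-picked multiplier.

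That harvesting is what the paper actually does, and it is the bulk of the work: it first proves $k_1\le7$ (Lemma 5.1, using $s\le9$ and $a>4e$), then runs a separate analysis for each value $k_1\in\{2,\dots,7\}$ (Lemmas 5.2--5.6 and 5.9, with Lemmas 5.7 and 5.8 as auxiliaries), splitting each case according to the integer parts of $\frac{kn}{c}$ and $\frac{kn}{b}$ for several $k$. Moreover, most of the degenerate sub-cases are resolved not by identifying the primes at all, but by producing an auxiliary multiplier $M$ (the paper uses $M=9,12,16,18,22,25,27,33,36,44,\dots$) for which either $|Me|_n+|Mc|_n+|M(n-b)|_n+|M(n-a)|_n\in\{n,3n\}$ or at least two of the inequalities in Lemma 2.2(2) hold; verifying these relies on the sharpened bounds on $a$ extracted in each branch. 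So your proposal supplies the correct (and easy) half of the argument together with an accurate diagnosis of where the difficulty sits, but the difficulty itself --- several pages of unavoidable case analysis --- is left unresolved, and the one mechanism you propose for it (shrinking the prime list from the single interval $[\frac{k_1n}{c},\frac{k_1n}{b})$ alone) cannot work.
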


\section{Proof of Proposition 2.14}

In this section, we assume that $\lceil\frac{n}{c}\rceil<\lceil\frac{n}{b}\rceil$. Let $m_1=\lceil\frac{n}{c}\rceil$. Then we have $m_1-1 <\frac{n}c\leq m_1 < \frac{n}{b}$. By Lemma 2.3 (1), it suffices to find $m$ and $k$ such that $\frac{kn}{c}\leq m <\frac{kn}{b}$, $\gcd(m, n) =1$, $1\leq k\leq b$, and $ma < n$. So in what follows, we may always assume that $\gcd(n, m_1)> 1$.

\begin{lemma} If $\left[\frac{n}{c}, \frac{n}{b}\right]$ contains at least two integers, then ind(S) = 1.\end{lemma}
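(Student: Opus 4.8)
The plan is to apply the first criterion of Lemma 2.3 with $k=1$. For that value of $k$ the criterion asks for an integer $m$ with $\frac{n}{c}\le m\le\frac{n}{b}$, $\gcd(m,n)=1$ and $ma<n$; the condition $1\le k\le b$ is automatic, and $ma<n$ is free as well, since any $m\le\frac{n}{b}$ gives $ma\le\frac{n}{b}\,a<n$ because $a<b$ (indeed $a\ne b$, as $\gcd(n,a)$ and $\gcd(n,b)$ are distinct members of $\{p_1,p_2,p_1p_3,p_2p_3\}$). Hence the lemma reduces to the purely arithmetic assertion: \emph{if $\left[\frac{n}{c},\frac{n}{b}\right]$ contains at least two integers, then one of them is coprime to $n$.} Under the standing assumption of this section the smallest integer $m_1=\lceil n/c\rceil$ in the interval has $\gcd(m_1,n)>1$, so the coprime integer must be sought among $m_1+1,m_1+2,\dots$.

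The engine of the argument is one counting observation: since $\gcd(n,6)=1$, every prime divisor of $n$ is at least $5$, so a fixed prime $p\mid n$ divides at most one of any four consecutive integers. As $n=p_1p_2p_3$ has only three prime factors, among any four consecutive integers at most three are non-coprime to $n$; hence at least one is coprime. The integers contained in $\left[\frac{n}{c},\frac{n}{b}\right]$ are consecutive, so if there are four or more of them a coprime one exists and we are done.

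It remains to treat the short intervals, containing exactly two integers $m_1,m_1+1$ or exactly three $m_1,m_1+1,m_1+2$. Arguing by contradiction I would assume all of them share a factor with $n$. The congruence condition $\gcd(n,6)=1$ already rules out $2,3,4,6$ as non-coprime values, which disposes of the smallest cases immediately; for instance $m_1=5$ forces $m_1+1=6$, coprime to $n$. For the remaining short runs I would invoke the precise divisibility pattern of hypothesis (A1), that $\{\gcd(n,e),\gcd(n,c),\gcd(n,b),\gcd(n,a)\}=\{p_1,p_2,p_1p_3,p_2p_3\}$, which fixes exactly which primes divide $c$ and $b$ and hence determines $m_1=\lceil n/c\rceil$ modulo those primes together with the position of the two endpoints $\frac{n}{c}$, $\frac{n}{b}$. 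Feeding this into the size relations $e<a\le b<\frac{n}{2}<c$ and $e+c=a+b$ should show that a block of two or three consecutive integers, each divisible by a distinct prime factor of $n$, cannot fit inside $\left[\frac{n}{c},\frac{n}{b}\right]$ when $n>1000$.

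The main obstacle is precisely this last, finite, case analysis. The counting step settles every ``long'' interval cleanly, but a run of two or three consecutive non-coprime integers is genuinely possible in the abstract (its integers being multiples of distinct primes $p_i\ge 5$), so ruling it out here must use the arithmetic of (A1) together with the inequalities linking $e,a,b,c$, rather than the pure density of coprime residues. I expect essentially all the work of the proof to be concentrated there.
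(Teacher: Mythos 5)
Your first two steps are sound and match the paper: with $k=1$ in Lemma 2.2(1), any integer $m\in\left[\frac{n}{c},\frac{n}{b}\right]$ coprime to $n$ suffices, since $ma\le\frac{n}{b}\,a<n$ because $a<b$; and since every prime factor of $n$ is at least $5$, any four consecutive integers contain one coprime to $n$, which disposes of long intervals. The genuine gap is your reduction of the remaining cases to the assertion that \emph{one of the two or three integers in $\left[\frac{n}{c},\frac{n}{b}\right]$ itself is coprime to $n$}. For the two-integer case this assertion is not provable from (A1) and the size relations, and the paper does not prove it. What (A1) gives (via $c\ge p_2p_3$, $b\ge p_1p_3$, hence $m_1=\lceil n/c\rceil\le p_1$ and $\frac{n}{b}\le p_2$) is that if both integers $m_1,m_1+1$ are non-coprime to $n$, then necessarily $\{m_1,m_1+1\}=\{p_1,\ \hbox{a multiple of }p_3\}$ (no multiple of $p_2$ fits, as $m_1+1\le p_1+1<p_2$). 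This configuration is consistent with all the hypotheses --- it is exactly the paper's case (3.2) --- and the paper escapes it not by finding a coprime integer in $\left[\frac{n}{c},\frac{n}{b}\right]$, but by abandoning $k=1$: it passes to $k=2$, where $m=2m_1+1$ is odd and congruent to $\pm1$ modulo both $p_1$ and $p_3$, hence coprime to $n$ unless $2m_1+1=p_2$, and in that exceptional case to $k=3$ with $m=3m_1+2$; in each case one must then verify $ma<n$ using $a<\frac{4}{3}(a-e)=\frac{4}{3}(c-b)$, $c-b<\frac{3n}{(m_1-1)(m_1+2)}$, and the lower bounds $m_1\ge10$ (resp.\ $m_1\ge28$). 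A plan locked to $k=1$, as yours is, cannot be completed in this case.

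Two further remarks. For the three-integer case your expectation of a contradiction is correct in spirit, but the mechanism is not coprimality: the paper shows three consecutive non-coprime integers force $m_1=p_1$, $\frac{n}{b}=m_1+2=p_2$ (so $p_2=p_1+2$, $b=p_1p_3$, $c=p_2p_3$) and $2p_3\mid p_1+1$, whence $a-e=c-b=2p_3<2e$, contradicting the normalization $a>4e$ obtained from the renumbering; so that case is vacuous. Finally, a slip in your setup: in this paper's normalization all of $a\le b<c$ lie below $\frac{n}{2}$ (it is $n-b$ and $n-a$ that exceed $\frac{n}{2}$), so your chain $e<a\le b<\frac{n}{2}<c$ misstates the configuration; the facts actually available are $e<a\le b<c<\frac{n}{2}$, $e+c=a+b$, $a>4e$, and $e\in\{p_1,p_2,2p_2\}$.
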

\begin{proof}
Note that $b\geq p_1p_3$ and $b\geq p_1p_3$. Thus $\frac{n}{b}\leq p_2$ and $\frac{n}{c}\leq p_1$. It must hold
\beq m_1-1<\frac{n}{c}\leq m_1<m_1+1<\frac{n}{b}=m_1+2=p_2,\eeq
or
\beq m_1-1<\frac{n}{c}\leq m_1<m_1+1\leq\frac{n}{b}<m_1+2\leq p_2.\eeq
Clearly, $p_1=m_1$ or $p_1=m_1+1$.

If (3.1) holds, we have $m_1=p_1, 2p_3|(m_1+1), m_1+2=p_2$ and $b=p_1p_3$. Then we infer that $c=p_2p_3$ and $a-e=2p_3<2p_1\leq2e$, which contradicts to the assumption $a>4e$.

If (3.2) holds, we have $m_1\geq 10$ and
\beq 2m_1-2<\frac{2n}{c}\leq 2m_1<2m_1+1<2m_1+2\leq\frac{n}{b}<2m_1+4.\eeq
If $\gcd(n,2m_1+1)=1$, let $m=2m_1+1$ and $k=2$, we have
\beqs ma<\frac{4m}{3}(a-e)=\frac{4m}{3}(c-b)<\frac{8m_1+4}{3}\times\frac{3n}{(m_1+2)(m_1-1)}\leq\frac{7n}{9}<n,\eeqs
as desired.
If $\gcd(n,2m_1+1)>1$, we infer that $p_2=2m_1+1$ and $m_1\geq28$. Let $m=3m_1+2$ and $k=3$, we have $\gcd(n,m)=1$ and
\beqs ma<\frac{4m}{3}(a-e)=\frac{4m}{3}(c-b)<\frac{12m_1+8}{3}\times\frac{3n}{(m_1+2)(m_1-1)}\leq\frac{172n}{405}<n,\eeqs
and $\ind(S)=1$.
\end{proof}

By Lemma 3.1, we may assume that $\left[\frac{n}{c}, \frac{n}{b}\right]$ contains exactly one integer $m_1$, and thus
\beq m_1-1 <\frac{n}{c}\leq m_1 <\frac{n}{b}<m_1+1.\eeq

Let $l$ be the smallest integer such that $[\frac{ln}{c}, \frac{ln}{b})$ contains at least four integers. Clearly, $l\geq 3$. Since $\frac{n}{b}-m_1 < 1$ and $m_1-\frac{n}{c} < 1$, by using the minimality of $l$ we obtain that $lm_1-4 < \frac{ln}{c} <
\frac{ln}{b} < lm_1 + 4$. Then $\frac{ln(c-b)}{bc} = \frac{ln}{b}-\frac{ln}{c} < (lm_1 + 4)-(lm_1-4) = 8$ and thus
\beqs l <\frac{8bc}{(c-b)n}<\frac{8b}{(a-e)(m_1-1)}<\frac{8b}{3e(m_1-1)}< b.\eeqs

We claim that $[\frac{ln}{c}, \frac{ln}{b})$ contains at most six integers. For any positive integer $j$, let $N_j$ denote the number of integers contained in $[\frac{ln}{c}, \frac{ln}{b})$. Since
\beqs \left(\frac{(j + 1)n}{b}-(j + 1)m_1\right)- \left(\frac{jn}{b}-jm_1\right)=\frac{n}{b}-m_1<1,\\
\left((j + 1)m_1-\frac{(j + 1)n}{c}\right)- \left(jm_1-\frac{jn}{c}\right)=m_1-\frac{n}{c}<1,\eeqs
we infer that $N_{j+1}-N_j\leq2$, it is sufficient to show our claim.

By the claim above we have
\beqs lm_1-j_0 <\frac{ln}{c}\leq lm_1-j_0 + 1 <\cdots < lm_1 -j_0 + 4 <\frac{ln}{b}\leq lm_1-j_0 + 6\eeqs
for some $1\leq j_0\leq 4$. We remark that since $n = p_1p_2p_3$ and $[\frac{ln}{c}, \frac{ln}{b})$ contains at least four integers, one of them (say $m$) must be co-prime to $n$. If $ma < n$, then we have done by Lemma 2.2(1)(with
$k = l < b$).

Proposition 2.14 can be proved by the following three lemmas.

\begin{lemma}
If $m_1\not=5,7$, then $\ind(S)=1$.
\end{lemma}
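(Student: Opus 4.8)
The aim is to produce a pair $(k,m)$ satisfying the hypotheses of Lemma 2.2(1): $\frac{kn}{c}\le m<\frac{kn}{b}$, $\gcd(m,n)=1$, $1\le k\le b$ and $ma<n$. The discussion preceding the lemma already supplies, at the critical level $l$, a coprime integer $m$ lying in $[\frac{ln}{c},\frac{ln}{b})$ together with the estimates $l<\frac{8b}{3e(m_1-1)}<b$ and $m\le lm_1-j_0+5$. Hence the whole task reduces to arranging that this coprime $m$ (or one at a suitable level $k$) also obeys $ma<n$. My plan is first to read off exactly what $m_1,a,b,c$ are, then to translate the hypothesis $m_1\ne5,7$ into a size condition, and finally to run the estimate.

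First I would pin down the data. Because $\gcd(n,m_1)>1$ and $c\ge p_2p_3=n/p_1$ forces $n/c\le p_1$, while $b\ge p_1p_3=n/p_2$ forces $m_1<n/b\le p_2$, the only prime of $n$ that can divide $m_1$ is $p_1$ (or $p_3$, when $p_3<p_2$). Combining this with $m_1-1<n/c\le m_1<n/b<m_1+1$ and the gcd list $\{p_1,p_2,p_1p_3,p_2p_3\}$ should force $m_1=p_1$, $c=p_2p_3$, $e=p_1$, $\gcd(n,b)=p_1p_3$ and $\gcd(n,a)=p_2$. Writing $b=ip_1p_3$ and $a=jp_2$ and substituting into $e+c=a+b$ yields the single Diophantine relation $jp_2=p_1+(p_2-ip_1)p_3$, in which $r:=p_2-ip_1$ satisfies $0<r<i$ because $[\frac{n}{c},\frac{n}{b})$ contains only the integer $p_1$. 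Under this identification the hypothesis $m_1\ne5,7$ is precisely $p_1\ge11$, and one computes $n/a=p_1p_3/j$, so that $ma<n$ is equivalent to the clean inequality $mj<p_1p_3$.

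With these relations, and with $s=\lfloor b/a\rfloor\le9$ from Lemma 2.9, I would estimate. The coprime integer at level $l$ satisfies $m<\frac{ln}{b}=l\,p_2/i$, while $l<\frac{8b}{3e(m_1-1)}=\frac{8b}{3p_1(p_1-1)}$; feeding $b=ip_1p_3$ into the latter shows $l$ is of order $ip_3/(p_1-1)$, and the extra factor $\frac{1}{p_1-1}$ is exactly what $p_1\ge11$ exploits. The estimate should then split according to $s$ (equivalently, according to how close $b/a$ is to $1$), using $jp_2=p_1+rp_3$ to control $j$, and in each branch produce a level $k\le b$ at which a coprime integer $m$ lands below $n/a=p_1p_3/j$, whence $mj<p_1p_3$ and Lemma 2.2(1) applies.

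The hard part is this last step, specifically the regime where $b/a$ is close to $1$. There $n/a=p_1p_3/j$ can be as small as roughly $p_2/i<p_1+1$, while every integer available at level $l\ge3$ is at least $lp_1\ge3p_1$; thus the coprime integer guaranteed by the minimal-level argument can exceed $n/a$ and give $ma\ge n$. Overcoming this requires showing, via the exact relation $jp_2=p_1+rp_3$ together with $s\le9$ and $a>4e$, that either this small-$s$ regime cannot occur or that a coprime multiplier can be found at a lower level; here the density of the residues killed by $p_1,p_2,p_3$ inside the short interval and the precise location of $n/a$ must be balanced against one another. The slack $p_1\ge11$ is what makes the balance close uniformly in $i$, whereas $p_1\in\{5,7\}$ leave too little room and are deferred to the two following lemmas.
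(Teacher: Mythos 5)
You set up the correct frame (take a coprime integer $m$ in $[\frac{ln}{c},\frac{ln}{b})$ and apply Lemma 2.2(1) with $k=l<b$), but your argument has a genuine gap, and it is precisely the step you defer at the end: you never establish $ma<n$, and the ``hard part'' you describe --- the regime where $b/a$ is close to $1$ and the available integers all exceed $n/a$ --- is left open. The missing idea is that this regime cannot occur, because of an inequality your write-up never uses: $a>4e$ together with $e+c=a+b$ gives $a<\frac{4}{3}(a-e)=\frac{4}{3}(c-b)$. At level $l$ both $b$ and $c$ are confined to the window $lm_1-j_0<\frac{ln}{c}<\frac{ln}{b}\leq lm_1-j_0+6$, whence
\begin{equation*}
c-b\;\leq\;\frac{ln}{lm_1-j_0}-\frac{ln}{lm_1-j_0+6}\;=\;\frac{6ln}{(lm_1-j_0)(lm_1-j_0+6)},
\end{equation*}
so $a<\frac{8ln}{(lm_1-j_0)(lm_1-j_0+6)}$: $a$ is of order $n/(lm_1^2)$ while $b$ is of order $n/m_1$, and in fact $b/a>\frac{lm_1-j_0}{8}\geq 3$, so $b/a$ near $1$ is impossible. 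More directly, any integer of the interval satisfies $m\leq lm_1-j_0+5$, and $m_1\neq5,7$ together with $\gcd(n,m_1)>1$ and $\gcd(n,6)=1$ forces $m_1\geq10$, hence $l\leq\frac{lm_1}{10}$ and, writing $x=lm_1\geq30$, $j_0\leq4$,
\begin{equation*}
ma\;<\;\frac{8l(lm_1+4)n}{(lm_1-j_0)(lm_1-j_0+6)}\;\leq\;\frac{0.8\,x(x+4)}{(x-4)(x+2)}\,n\;<\;n.
\end{equation*}
This short estimate is essentially the paper's entire proof of the lemma: no lower levels, no residue-density balancing, and no case distinction on $s$ are needed; the hypothesis $m_1\neq5,7$ enters only through $m_1\geq10$.

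Separately, the structural reduction you attempt before estimating is not sound as stated, quite apart from being unnecessary. From $\gcd(n,c)=p_2p_3$ one cannot conclude $c=p_2p_3$ (only that $c$ is a multiple of $p_2p_3$ below $\frac{n}{2}$); the assignment of the gcd values $\{p_1,p_2,p_1p_3,p_2p_3\}$ to $e,c,b,a$ is not forced in the way you claim, and in particular $e$ need not be $p_1$ (after the paper's renumbering $e\in\{p_1,p_2,2p_2\}$); and $m_1$ need not equal the prime $p_1$ --- it only needs a common factor with $n$, so it may be composite, e.g.\ twice a prime divisor of $n$. So even the preparatory identifications $b=ip_1p_3$, $a=jp_2$, $jp_2=p_1+rp_3$ would need justification, and the case you explicitly leave unresolved is the whole difficulty; it dissolves the moment $a<\frac{4}{3}(c-b)$ is brought in.
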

\begin{proof}
Since $m_1\not=5,7$, we have $m_1\geq 10$ and $n>m_1b\geq10b$. Let $k=l$ and let $m$ be one of the integers
in $[\frac{ln}{c}, \frac{ln}{b})$ which is co-prime to $n$.
Note that $m\leq lm_1 + 3$ and $l\geq 3$, then
\beqs ma\leq (lm_1 + 3)a <\frac{4(lm_1+3)}{3}\left(\frac{ln}{lm-j_0}-\frac{ln}{lm-j_0+6}\right)=\frac{4(lm_1+3)\times6ln}{3(lm_1-j_0)(lm_1-j_0+6)}<n.\eeqs
and we have done.
\end{proof}

\begin{lemma}
If $m_1=5$, then $\ind(S)=1$.
\end{lemma}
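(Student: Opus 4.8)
The plan is to realize the criterion of Lemma 2.2(1): produce $m$ with $\gcd(m,n)=1$ lying in a window $[\frac{kn}{c},\frac{kn}{b})$ with $1\le k\le b$ and $ma<n$. Since $m_1=5$ means $\frac{n}{5}\le c<\frac{n}{4}$ and $\frac{n}{6}<b<\frac{n}{5}$, and since $e<\frac{a}{4}$ gives $a<\frac{4}{3}(c-b)<\frac{4}{3}\cdot\frac{n}{12}=\frac{n}{9}$, I would first settle $5\nmid n$: then $\gcd(5,n)=1$ and $5=m_1\in[\frac{n}{c},\frac{n}{b})$ with $5a<\frac{5n}{9}<n$, so $k=1,m=5$ already gives $\ind(S)=1$. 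Hence I may assume $5\mid n$, and as $5\le p_1\le p_2\le p_3$ this forces $p_1=5$ and $n=5p_2p_3$.

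A decisive observation is that after the renumbering $e\in\{5,p_2,2p_2\}$, so $e\le 2p_2<2\sqrt{n/5}$ is of order $\sqrt n$; thus $a=e+(c-b)$ equals $c-b$ up to a lower-order term, and both the requirement $ma<n$ and the alternative $a\le\frac{b}{k}$ of Lemma 2.3 become governed by $c-b$. Because $p_1=5$ and $\gcd(n,6)=1$, the integers $9=3^2$ and $16=2^4$ are units modulo $n$, and they dispose of the bulk of the parameter region. If $c\ge\frac{2n}{9}$ then $9\in[\frac{2n}{c},\frac{2n}{b})$ and $9a<9e+\frac{3n}{4}<n$, so $k=2,m=9$ works. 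If $c<\frac{2n}{9}$ and in addition $b<\frac{3n}{16}$, then $16\in[\frac{3n}{c},\frac{3n}{b})$ while $c-b<\frac{2n}{9}-\frac{n}{6}=\frac{n}{18}$ forces $16a<n$, so $k=3,m=16$ works.

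This leaves the narrow band $\frac{n}{5}\le c<\frac{2n}{9}$, $\frac{3n}{16}\le b<\frac{n}{5}$, in which $9$ and $16$ fall into the gaps between consecutive windows. Here $c-b<\frac{2n}{9}-\frac{3n}{16}=\frac{5n}{144}$, so $a$ is small and $\frac{n}{a}$ correspondingly large; I would attack it in two complementary ways. First, one can take $k=l$, the least $k$ for which $[\frac{kn}{c},\frac{kn}{b})$ contains at least four (hence at most five) consecutive integers: among five consecutive integers at most one is divisible by each of $5,p_2,p_3$, so a unit $m$ is present, and the estimate $l\delta<4+\delta$ (with $\delta=\frac{n}{b}-\frac{n}{c}$) together with $c<\frac{n}{4}$ yields $la<b$, whence Lemma 2.3 applies. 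Second, where this margin is too tight I would run through the explicit units $14=2\cdot7,\,19,\,23,\,26=2\cdot13,\,28,\dots$, each landing in a window $[\frac{kn}{c},\frac{kn}{b})$ with $k\in\{3,4,5,6,\dots\}$ on an explicit sub-band of $c$, noting that $n=5p_2p_3$ has only two further prime factors, so at most two of these candidates are non-units, while $n>1000$ rules out the smallest triples such as $5\cdot7\cdot13$.

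The main obstacle will be to close the residual band uniformly, i.e.\ to certify that in \emph{every} sub-band a unit $m$ simultaneously lies in some window and satisfies $ma<n$. The tension is sharpest when $c-b$ is extremely small and $e=p_1=5$, for then $a$ may be as small as $O(\sqrt n)$: the windows become very thin, so $k$ must be pushed up for a window to capture any unit, yet this stays safe precisely because $\frac{n}{a}$ grows as fast as the windows thin. Conflicts occur exactly when $p_2$ or $p_3$ equals one of the small primes $7,11,13,19,23$ in the candidate list; I would dispatch these by combining the window bound (at most one multiple of each prime among the consecutive integers of a window) with the divisibility structure—each of $c,b$ being a multiple of one of $5,p_2,5p_3,p_2p_3$—which, intersected with $c\in[\frac{n}{5},\frac{n}{4})$, $b\in(\frac{n}{6},\frac{n}{5})$ and the relation $e+c=a+b$, forces $c=\frac{n}{5}=p_2p_3$ or pins $c,b$ down to finitely many values settled by an explicit choice of $(k,m)$.
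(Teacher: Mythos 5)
Your opening reductions are fine and agree with the paper's: if $5\nmid n$ then $(k,m)=(1,5)$ works since $a<\frac{4}{3}(c-b)<\frac{n}{9}$, and if $c\ge\frac{2n}{9}$ then $(k,m)=(2,9)$ works. But the two claims that carry the rest of your plan are not valid, and they sit exactly where the difficulty of this lemma lives. First, in the region $c<\frac{2n}{9}$, $b<\frac{3n}{16}$, you assert that $c-b<\frac{n}{18}$ ``forces $16a<n$''; it only gives $a<\frac{4}{3}\cdot\frac{n}{18}=\frac{2n}{27}$, and $\frac{2n}{27}>\frac{n}{16}$, so $16a$ may well exceed $n$. (The paper explicitly confronts this possibility: when $16a>n$ it abandons the window criterion and uses $M=18$ together with the identity $|Me|_n+|Mc|_n+|M(n-b)|_n+|M(n-a)|_n=3n$.) Second, in the residual band your minimal-$l$ argument cannot deliver the hypothesis of Lemma 2.3: from $a<\frac{4}{3}(c-b)=\frac{4\delta bc}{3n}$ and $l\delta<4+\delta$ you get only $la<\frac{4(4+\delta)bc}{3n}$, and $la<b$ would require $4(4+\delta)c\le 3n$, which is impossible because $c\ge\frac{n}{5}$ already forces $16c>3n$. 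The same computation shows the variant via Lemma 2.2(1) (a unit $m\le\frac{ln}{b}$ with $ma<n$) fails by the same constant factor. This marginal failure at $m_1=5$ is precisely why the paper's generic argument (its Lemma 3.2, which needs $m_1\ge 10$) excludes this case and why the present lemma requires its own long proof.

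What your plan is missing is the paper's second tool, invoked whenever every unit in a reachable window violates $ma<n$: choose $M$ coprime to $n$ and verify either that $|Me|_n+|Mc|_n+|M(n-b)|_n+|M(n-a)|_n$ equals $n$ or $3n$ (the condition quoted from Remark 2.1 of [11]), or that at least three of the four terms lie on the same side of $\frac{n}{2}$ (Lemma 2.2(2)), supplemented by contradictions against $s\le 9$ via Lemma 2.9. The paper's proof of this lemma alternates between the window criterion and such $M$-arguments with $M=12,18,24,26,27,36,44$, and in its deepest sub-case (windows around $\frac{8n}{c}\le 39<40<41<\frac{8n}{b}$) it must pin $n$ down to $5\times13\times41$ and eliminate that case by hand. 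Your final paragraph, which is where all of this work would have to happen, only gestures at ``running through explicit units'' and never addresses the sub-cases in which each such unit has $ma>n$; as written, the proposal cannot be completed into a proof.
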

\begin{proof}
Since $m_1=5$, we have $4<\frac{n}{c}\leq 5<\frac{n}{b}<6$, thus $a<\frac{4}{3}(c-b)<\frac{n}{9}$.
If  $\frac{2n}{c}<9<\frac{2n}{b}$, let $m=9$ and $k=2$, then we have done.
Then $9<\frac{2n}{c}$, so $a<\frac{4}{3}(c-b)<\frac{2n}{27}<\frac{n}{13}$.

If $\frac{2n}{c}<11<\frac{2n}{b}$, we infer that $\frac{27}{2}<\frac{3n}{c}\leq15<16<\frac{33}{2}<\frac{3n}{b}<18.$
If $16a<n$, let $m=16$ and $k=3$, we have done. If $16a>a$, then $n<18a<2n$, $3n<18b<18c<4n$ and $18e<\frac{9a}{2}<n$, let $M=18$. Then $|Me|_n+|Mc|_n+|M(n-b)|_n+|M(n-a)|_n=Me+(Mc-3n)+(4n-Mb)+(2n-Ma)=3n$, then we have done.

Next we assume that $9<\frac{2n}{c}\leq10<\frac{2n}{b}\leq 11$.

We infer that $a<\frac{n}{18}$. If $\frac{3n}{c}<16<\frac{3n}{b}$, let $m=16$ and $k=3$, then we have done.
Otherwise, $\frac{27}{2}<\frac{3n}{c}\leq15<\frac{3n}{b}<16$, hence we obtain $a<\frac{n}{21}$.

If $\frac{27}{2}<\frac{3n}{c}\leq 14<15<\frac{3n}{b}<16$, we have $\frac{3n}{16}<b<\frac{n}{5}<\frac{3n}{14}<c<\frac{2n}{9}$. If $24a<n$, let $M=12$, we have
$|M(n-a)|_n>\frac{n}{2}$, $|M(n-b)|_n>\frac{n}{2}$, $|Mc|_n>\frac{n}{2}$, then $\ind(S)=1$. If $24a>n$,  we infer that $n<27a<2n$, $27e<n$ and $5n<\frac{81n}{16}<27b<27c<6n$, let $M=27$. Then
$|Me|_n+|Mc|_n+|M(n-b)|_n+|M(n-a)|_n=Me+(Mc-5n)+(6n-Mb)+(2n-Ma)=3n$, and we have done.

Next assume that $14<\frac{3n}{c}\leq15<\frac{3n}{b}<16$, we infer that $28a<n$.

If $\frac{4n}{c}\leq 19<20<21<\frac{4n}{b}$, since either $19$ or $21$ is co-prime to $n$(otherwise, $n=5\times7\times19=665<1000$, a contradiction), let $m$ be one of $19,21$ such that $\gcd(n,m)=1$ and $k=4$, then we have done.

If $\frac{4n}{c}\leq 19<20<\frac{4n}{b}\leq21$, let $M=24$, then $4n<24b<\frac{24n}{5}<5n<\frac{96n}{19}<24\times\frac{4n}{19}<24c<6n$, and $|Me|_n+|Mc|_n+|M(n-b)|_n+|M(n-a)|_n=Me+(Mc-5n)+(5n-Mb)+(n-Ma)=n$, and we have done.

If $19<\frac{4n}{c}\leq 20<21<\frac{4n}{b}$, if $\gcd(n,21)=1$, let $m=21$ and $k=4$, then we have done. If $\gcd(n,21)>1$, then $\gcd(26,n)=1$. Otherwise, $n=5\times7\times13<1000$, a contradiction. Let $M=26$, we have $4n<26b<\frac{104n}{21}<5n<\frac{26n}{5}<26c<6n$, and $|Me|_n+|Mc|_n+|M(n-b)|_n+|M(n-a)|_n=Me+(Mc-5n)+(5n-Mb)+(n-Ma)=n$. Then $\ind(S)=1$.

Next assume that $19<\frac{4n}{c}\leq 20<\frac{4n}{b}\leq 21$.

If $39<\frac{8n}{c}\leq 40<\frac{8n}{b}\leq 41$, we infer that $a<\frac{n}{73}$. Then $b>\frac{n}{6}>\frac{73a}{6}>12a$, which contradicts to $s\leq 9$.

If $39<\frac{8n}{c}\leq 40<41<\frac{8n}{b}\leq 42$, we infer that $a<\frac{n}{51}$. Let $M=36$, then $|Me|_n<9a<\frac{n}{2}$, $|Mc|_n<\frac{n}{2}$ and $|M(n-a)|_n<\frac{n}{2}$(otherwise, $72a<n$ contradicts to $s\leq9$).  Exactly,  $|Mc|_n$ belongs to  $(\frac{n}{5},\frac{15n}{39})$. Then $\ind(S)=1$.

If $38<\frac{8n}{c}\leq 39<40<\frac{8n}{b}\leq 41$, we infer that $\frac{n}{52}<a<\frac{n}{48}$. If $\gcd(n,39)=1$, let $m=39$ and $k=8$, then we have done. If $\gcd(n,39)>1$, then $\gcd(n,11)=1$, otherwise $n=5\times 11\times 13=715<1000$, a contradiction. Let $M=44$, then $\gcd(n,M)=1$ and $|M(n-a)|_n<\frac{n}{2}$(otherwise, $88a<n$ contradicts to $s\leq9$), $|M(n-b)|_n<\frac{n}{2}$, $|Mc|_n<\frac{n}{2}$. Exactly,  $|M(n-b)|_n$ belongs to  $(\frac{n}{5},\frac{13n}{41})$ and $|Mc|_n$ belongs to $(\frac{n}{39},\frac{3n}{19})$. Then $\ind(S)=1$.

Now we infer that $38<\frac{8n}{c}\leq39<40<41<\frac{8n}{b}\leq 42$ and $\frac{n}{53}<a<\frac{n}{37}$.
Let $M=36$, we have $|M(n-a)|_n<\frac{n}{2}$(otherwise, $72a<n$ contradicts to $s\leq9$, $Me<9a<\frac{n}{2}$ and $6n+\frac{6n}{7}\leq Mb<7n+\frac{n}{41}$.
If $Mb<7n$, then $|M(n-b)|_n<\frac{n}{2}$ and $\ind(S)=1$. Hence we infer that $a<\frac{n}{46}$. Moreover, we can assume that $n=5\times13\times41$. Otherwise, there exists an integer(say $m$) between $39$ and $41$ such that $\gcd(n,m)=1$ and $ma<n$. Let $k=8$, thus $\ind(S)=1$. Simply calculating shows that $p_1=5, p_2=13$. However, we can't find suitable $a$ and $e$ such that $a>4e$.

Hence we complete the proof.
\end{proof}

\begin{lemma}
If $m_1=7$, then $\ind(S)=1$.
\end{lemma}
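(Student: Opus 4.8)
The plan is to treat the case $m_1=7$ exactly as the case $m_1=5$ was treated in Lemma 3.3: climb through the levels $k=2,3,4,\dots$ and, at each level, either exhibit a multiplier that triggers Lemma 2.2(1) or one that triggers Lemma 2.2(2). First I would record the consequences of $m_1=7$. Since we are in the standing situation $\gcd(n,m_1)>1$ and $m_1=7$, we get $7\mid n$, so one of $p_1,p_2,p_3$ equals $7$. From the normalized inequality $m_1-1<\frac{n}{c}\leq m_1<\frac{n}{b}<m_1+1$ with $m_1=7$ we obtain $6<\frac{n}{c}\leq 7<\frac{n}{b}<8$, i.e. $\frac{n}{7}\leq c<\frac{n}{6}$ and $\frac{n}{8}<b\leq\frac{n}{7}$. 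Using $e+c=a+b$ and $a>4e$ we have $c-b=a-e>\frac{3}{4}a$, whence $a<\frac{4}{3}(c-b)<\frac{4}{3}\left(\frac{n}{6}-\frac{n}{8}\right)=\frac{n}{18}$. This initial bound on $a$ is what keeps the level-$k$ multipliers small enough to hope for $ma<n$.

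Next I would run the level-by-level search. At level $k$ the relevant window is $\left[\frac{kn}{c},\frac{kn}{b}\right)$, whose integer content I locate by multiplying the inequalities for $\frac{n}{c},\frac{n}{b}$ by $k$ and refining, exactly as in Lemma 3.3 (where the windows near $9$--$11$, $14$--$16$, $19$--$21$ and $38$--$42$ were dissected). For $m_1=7$ the first windows sit near $13$--$15$ ($k=2$), $19$--$23$ ($k=3$), $25$--$31$ ($k=4$), and so on. Whenever such a window contains an integer $m$ with $\gcd(m,n)=1$ and $ma<n$, Lemma 2.2(1) gives $\ind(S)=1$; note that the running bound on $a$ tightens each time a window is ruled out (as it did in Lemma 3.3, dropping to $\frac{n}{21},\frac{n}{51},\frac{n}{73},\dots$), which is what keeps $ma<n$ within reach as $k$ grows. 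When instead the only integers available share a factor with $n$, or satisfy $ma\geq n$, I would switch to Lemma 2.2(2): choose an explicit $M$ coprime to $n$ for which at least three of $|Me|_n,|Mc|_n,|M(n-b)|_n,|M(n-a)|_n$ exceed $\frac{n}{2}$ (equivalently, for which the four residues sum to $3n$), so that the sufficient conditions force $\ind(S)=1$ through the complementary multiplier $n-M$.

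The endgame, and the main obstacle, is the set of \emph{cornered} configurations in which, for every small $k$, each integer of the window $\left[\frac{kn}{c},\frac{kn}{b}\right)$ is divisible by one of the primes dividing $n$. Because $7\mid n$ already removes the multiples of $7$ and $n$ has only three prime factors, this can persist only when $n$ carries one or two further small primes whose multiples cover the short windows, which pins $n$ down to finitely many explicit products. The bound $n>1000$ discards most of them, and for each survivor I would test the prescribed pattern $\{\gcd(n,e),\gcd(n,c),\gcd(n,b),\gcd(n,a)\}=\{p_1,p_2,p_1p_3,p_2p_3\}$ together with $a>4e$, deriving a contradiction or producing the explicit $M$ directly, just as the proof of Lemma 3.3 ended by isolating $n=5\cdot 13\cdot 41$ and showing that no admissible $a,e$ exist. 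The delicate part is bookkeeping completeness: one must be sure the finite list of cornered $n$ is exhaustive and that in each of them the gcd-pattern genuinely precludes $a>4e$, since this is exactly where a stray admissible sequence could otherwise slip through.
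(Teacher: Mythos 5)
Your setup is correct and your plan does mirror the paper's method (the paper proves this lemma exactly by the level-by-level window analysis you describe, starting from $6<\frac{n}{c}\leq 7<\frac{n}{b}<8$ and $a<\frac{n}{18}$). But what you have written is a plan, not a proof: every place where the actual mathematical content of the lemma lives is replaced by ``I would run the search,'' ``I would switch to Lemma 2.2(2),'' ``I would test the prescribed pattern.'' The lemma \emph{is} the finite case analysis. Concretely, the paper dissects the level-$2$ window into the four configurations $12<\frac{2n}{c}\leq 13<14<15<\frac{2n}{b}<16$, $\ 13<\frac{2n}{c}\leq 14<15<\frac{2n}{b}<16$, $\ 12<\frac{2n}{c}\leq 13<14<\frac{2n}{b}<15$, and $13<\frac{2n}{c}\leq 14<\frac{2n}{b}<15$; in each it either finds a coprime $m$ with $ma<n$, or produces an explicit multiplier ($M=16$, $M=30$, $M=36$) and verifies a residue identity such as $|Me|_n+|Mc|_n+|M(n-b)|_n+|M(n-a)|_n=3n$ or $=n$, or isolates the exceptional modulus $n=5\times 7\times 29$ and kills it by the gcd pattern ($e<10$ forces $e=p_1$, and both $p_1=7$ and $p_1=5$ lead to contradictions). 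None of these computations appear in your proposal, and without them nothing has been proved.

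There is also a structural gap in your termination argument. You claim the search can only stall at ``cornered'' configurations that pin $n$ down to finitely many explicit products, with $n>1000$ discarding most of them. That is not how the paper's proof terminates in several branches: the decisive device there is the bound $s=\lfloor\frac{b}{a}\rfloor\leq 9$ (Lemma 2.9, available because of the renumbering that guarantees $a>4e$). Each time a window is ruled out, the bound on $a$ tightens, and once one reaches, e.g., $a<\frac{n}{73}$ while $b>\frac{4n}{29}$, one gets $b>10a$, contradicting $s\leq 9$; this is what closes the subcase $27<\frac{4n}{c}\leq 28<\frac{4n}{b}\leq 29$. Your outline never invokes $s\leq 9$, so you have no mechanism forcing the level-by-level search to stop: windows can fail to contain coprime integers without pinning $n$ down (a single large prime factor of $n$ can absorb an integer in a short window), and your ``finitely many cornered $n$'' claim is asserted, not established. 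A minor additional point: the two sufficient conditions you conflate -- three residues exceeding $\frac{n}{2}$ versus the four residues summing to $3n$ -- are distinct conditions (Notice (1) and (2) before Lemma 2.2), not equivalent reformulations; each is separately sufficient, and the paper uses both, but in different subcases.
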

\begin{proof}
Since $m_1=7$, we have $6<\frac{n}{c}\leq 7<\frac{n}{b}<8$, thus $a<\frac{4}{3}(c-b)<\frac{n}{18}$.

If $12<\frac{2n}{c}\leq 13<14<15<\frac{n}{b}<16$, then at least one of $13,14,15$ is co-prime to $n$. Let $m\in[13,15]$ such that $\gcd(n,m)=1$ and $k=1$, then $ma<n$ and $\ind(S)=1$.

If $13<\frac{2n}{c}\leq 14<15<\frac{2n}{b}<16$,  we have $\frac{n}{8}<b<\frac{2n}{15}<\frac{n}{7}<c<\frac{2n}{13}$, $a<\frac{n}{26}$ and $2n<16b<2n+\frac{2n}{15}<2n+\frac{2n}{7}<16c<2n+\frac{6n}{13}$. If $32a>n$, let $M=16$, we have
$|Me|_n<\frac{n}{2}$, $|Mc|_n<\frac{n}{2}$ and $|M(n-a)|_n<\frac{n}{2}$, then $\ind(S)=1$. If $32a<n$, by inequality $\frac{4n}{c}\leq 28<29<30<\frac{4n}{b}$, we infer that $n=5\times 7\times 29$. Since $e<\frac{a}{4}<\frac{n}{104}$, we have $e<10$, then $e=p_1$. If $p_1=7$, then  $p_2=29$ and $c=5\times29$, $a=29$, thus $b=4\times 29+7$, which contradicts to $7|b$. We infer that $p_1=5$, and thus $\frac{n}{c}\leq\frac{n}{p_2p_3}=5$, a contradiction.

If $12<\frac{2n}{c}\leq 13<14<\frac{2n}{b}<15$, we infer that $a<\frac{n}{22}$ and $91|n$. we also assume that $27a>n$. Otherwise, let $m=27$ and $k=4$, we have $\frac{4n}{c}\leq 26<27<28<\frac{42n}{b}$.
If $5|n$, then $n=5\times91=455<1000$, a contradiction. Thus $\gcd(n,30)=1$. Let $M=30$, we have
$Me<8a<n$, $4n<30b<30c<5n$ and $n<30a<2n$. Then $|Me|_n+|Mc|_n+|M(n-b)|_n+|M(n-a)|_n=Me+(Mc-4n)+(5n-Mb)+(2n-Ma)=3n$, and $\ind(S)=1$.

Next assume that $13<\frac{2n}{c}\leq 14<\frac{2n}{b}<15$, and we infer that $a<\frac{n}{36}$.

If $\frac{4n}{c}<27<\frac{4n}{b}$, let $m=27$  and $k=4$, then we have done. So $27<\frac{4n}{c}\leq 28<\frac{4n}{b}<30$, and $a<\frac{n}{50}$.

If $27<\frac{4n}{c}\leq 28<29<\frac{4n}{b}<30$, we have $\frac{2n}{15}<b<\frac{4n}{29}<\frac{n}{7}\leq c<\frac{4n}{27}$ and $\frac{24n}{5}<b<\frac{144n}{29}<5n<\frac{36n}{7}\leq c<\frac{48n}{9}<6n$. Let $M=36$, we have $|Me|_n+|Mc|_n+|M(n-b)|_n+|M(n-a)|_n=Me+(Mc-5n)+(5n-Mb)+(n-Ma)=n$, and $\ind(S)=1$.

If $27<\frac{4n}{c}\leq 28<\frac{4n}{b}\leq29$, we infer that $a<\frac{n}{73}$, then $b>\frac{4n}{29}>\frac{292a}{29}>10a$, which contradicts to $s\leq 9$.

We complete the proof.
\end{proof}

\section{Proof of Proposition 2.15}

In this section, we always assume that $\lceil\frac{n}{c}\rceil=\lceil\frac{n}{b}\rceil$, so $k_1\geq2$, and we also assume that $k_1>\frac{b}{a}$. Proposition 2.15 can be proved through the following two lemmas.

\begin{lemma}If the assumption is as in Proposition 2.15, then $k_1<3$.\end{lemma}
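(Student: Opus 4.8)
The plan is to prove the contrapositive in the usual reductive form: I will show that $k_1\ge 3$ already forces $\ind(S)=1$, so that in the only situation still requiring analysis one must have $k_1<3$. Thus assume $k_1\ge 3$, and keep in mind the standing data $c=a+b-e$, $a>4e$, $a\le b<\frac n2$, $c<\frac n2$, $s=\lfloor b/a\rfloor\le 9$, together with the case hypotheses $\lceil\frac nc\rceil=\lceil\frac nb\rceil$ and $k_1>\frac ba$.

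First I would turn the defining property of $k_1$ into a width estimate. Since $\lceil x\rceil=\lceil y\rceil$ with $x\le y$ forces $y-x<1$, the equality $\lceil\frac{(k_1-1)n}{c}\rceil=\lceil\frac{(k_1-1)n}{b}\rceil$ gives $(k_1-1)\bigl(\frac nb-\frac nc\bigr)<1$, whence $\frac nb-\frac nc<\frac{1}{k_1-1}$. Therefore the interval $[\frac{k_1n}{c},\frac{k_1n}{b})$ has length $k_1\bigl(\frac nb-\frac nc\bigr)<\frac{k_1}{k_1-1}\le\frac32$, and its least integer $m_1=\lceil\frac{k_1n}{c}\rceil$ satisfies $\frac{k_1n}{c}\le m_1<\frac{k_1n}{c}+1$ and $m_1<\frac{k_1n}{b}$. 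Writing $m_1c=k_1n+r'$, the bound $m_1c<k_1n+c$ together with $c<\frac n2$ gives $0\le r'=|m_1c|_n<\frac n2$, so $|m_1c|_n$ is automatically on the ``right side'' of $\frac n2$; a similar bookkeeping applies to $m_1b=k_1n-r$, with the caveat that $r$ need not be smaller than $n$.

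The argument then splits on the size of $m_1a$. If $m_1a<n$ and $\gcd(m_1,n)=1$, I would apply Lemma 2.2(1) with $k=k_1\le b$ and $m=m_1$ to conclude $\ind(S)=1$ directly. If $m_1a\ge n$, the plan is instead to compute all four residues $|m_1e|_n$, $|m_1c|_n$, $|m_1(n-b)|_n$, $|m_1(n-a)|_n$ from the relations above and from $m_1a=m_1e+m_1(a-e)=m_1e+(r+r')$, and to check either the sufficient condition ``sum $=3n$'' or the condition ``at most one residue lies in $[1,\frac n2]$'' recorded after Proposition~2.1; both give $\ind(S)=1$. Combining the cases, $k_1\ge3$ yields $\ind(S)=1$, so $k_1<3$ in the remaining case.

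The main obstacle is clearly the case $m_1a\ge n$. Here one must pin down the exact integer ``level'' of each of $m_1e,\,m_1c,\,m_1b,\,m_1a$ relative to multiples of $n$ before the residue sum can be evaluated; in particular $r=k_1n-m_1b$ may exceed $n$ (this already happens when $b$ is close to $a$), so the level of $|m_1(n-b)|_n$ is not constant and must be tracked case by case. Controlling these levels is exactly where I expect to spend the hypotheses $a>4e$, $s\le9$ and $k_1>\frac ba$, and where the divisibility pattern $\{\gcd(n,e),\gcd(n,c),\gcd(n,b),\gcd(n,a)\}=\{p_1,p_2,p_1p_3,p_2p_3\}$ enters — both to locate the residues and to guarantee that a multiplier coprime to $n=p_1p_2p_3$ can actually be chosen inside the narrow interval $[\frac{k_1n}{c},\frac{k_1n}{b})$.
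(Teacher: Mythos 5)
Your proposal proves the wrong statement and, even on its own terms, is left incomplete. Lemma 4.1 is a purely arithmetic assertion: under the hypotheses of Proposition 2.15 (the ceiling equality built into the definition of $k_1$ together with $k_1>\frac{b}{a}$), the case $k_1\ge 3$ simply cannot occur. Replacing this by ``$k_1\ge 3$ implies $\ind(S)=1$'' changes the claim: concluding $\ind(S)=1$ contradicts nothing, so it can never establish the numerical bound $k_1<3$. (Such a substitute could in principle be wired into the proof of Proposition 2.15, but it is not the lemma as stated, and it is a far harder route.) Moreover, both branches of your plan are unfinished: in the branch $m_1a<n$ you need $\gcd(m_1,n)=1$, which is not guaranteed --- the interval $[\frac{k_1n}{c},\frac{k_1n}{b})$ is too short to force a coprime element when $n$ has three prime factors --- and the branch $m_1a\ge n$ is only a description of what ``would need to be tracked,'' with the essential level bookkeeping explicitly deferred.

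The idea you are missing is that the hypothesis $k_1>\frac{b}{a}$ yields a \emph{lower} bound on the very gap that the ceiling equality bounds from above. You correctly derived $\frac{n}{b}-\frac{n}{c}<\frac{1}{k_1-1}$ from $\lceil\frac{(k_1-1)n}{c}\rceil=\lceil\frac{(k_1-1)n}{b}\rceil$, but you never played it against the reverse estimate: since $c-b=a-e>\frac{3a}{4}$ (from $a>4e$), $a>\frac{b}{k_1}$ (from $k_1>\frac{b}{a}$), and $c<\frac{n}{2}$, one has
\[
\frac{n}{b}-\frac{n}{c}=\frac{(a-e)n}{bc}>\frac{3a}{4}\cdot\frac{n}{bc}>\frac{3}{4k_1}\cdot\frac{n}{c}>\frac{3}{2k_1},
\]
and $\frac{3}{2k_1}\ge\frac{1}{k_1-1}$ exactly when $k_1\ge 3$. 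Hence for $k_1\ge 3$ the gap $\frac{(k_1-1)n}{b}-\frac{(k_1-1)n}{c}$ exceeds $1$, contradicting the ceiling equality; this one-line contradiction is the paper's entire proof. Your machinery (residue levels, the sum-equals-$3n$ criterion, coprime multipliers) is what the later lemmas of Section 5 need, but for this statement it is both unnecessary and insufficient.
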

\begin{proof}If $k_1\geq3$, then $\frac{(k_1-1)n}{b}-\frac{(k_1-1)n}{c}=\frac{(a-e)(k_1-1)n}{bc}\geq\frac{3a}{4}\frac{2k_1n}{3bc}>1$, a contradiction. \end{proof}

\begin{lemma}If the assumption is as in Proposition 2.15 and $k_1=2$, then $\ind(S)=1$.\end{lemma}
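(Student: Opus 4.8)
The plan is to exploit the two hypotheses $k_1=2$ and $\lceil n/c\rceil=\lceil n/b\rceil$ to pin the geometry of $a,b,c$ down to a very narrow range, reducing the whole lemma to a handful of explicit multipliers. First I would record that $k_1>b/a$ with $k_1=2$ gives $b<2a$; since $a\le b$ this is exactly $s=1$ and $a>b/2$. Writing $m_0:=\lceil n/c\rceil=\lceil n/b\rceil$, equality of the two ceilings forces $\frac{n}{b}-\frac{n}{c}=\frac{n(a-e)}{bc}<1$, and because $c<n/2$ this yields $a-e<bc/n<b/2$, i.e. the pinch $\frac b2<a<\frac b2+e$. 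Feeding $e<a/4\le b/4$ into $c=a+b-e$ gives $c\in(\frac{5b}4,\frac{3b}2)$, hence $n/c<\frac{4n}{5b}$ and $m_0<\frac{4n}{5b}+1$; combined with $m_0\ge n/b$ this forces $b>n/5$, and therefore $m_0=\lceil n/b\rceil\in\{3,4,5\}$. This is the crucial reduction: a three-element list of candidate values for $m_0$.

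The core of the argument is then a single uniform computation, showing that whenever $\gcd(m_0,n)=1$ the index is $1$, by splitting on the size of $a$ relative to $n/m_0$. If $a\ge n/m_0$ I take $m=m_0$: the ranges $m_0e<n/2$, $m_0c,m_0b\in[n,\frac{3n}2)$ and $m_0a\in[n,2n)$ give $|m_0e|_n=m_0e$, $|m_0c|_n=m_0c-n$, $|m_0(n-b)|_n=2n-m_0b$ and $|m_0(n-a)|_n=2n-m_0a$, whose sum collapses to $3n$ using $e+c=a+b$. If instead $a<n/m_0$ I take $m=2m_0$: now $2m_0c,2m_0b\in[2n,3n)$ and $2m_0a\in(n,2n)$ (the lower bound $2m_0a>n$ coming from $a>b/2\ge n/(2m_0)$), and the four residues again sum to $3n$. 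In either case Notice (1) gives $\ind(S)=1$. Since $\gcd(n,6)=1$ and $n$ is odd, $\gcd(3,n)=\gcd(4,n)=1$ automatically, so this disposes of $m_0\in\{3,4\}$ outright, and also of $m_0=5$ when $5\nmid n$.

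The only genuinely resistant case is $m_0=5$ with $5\mid n$, where $b,c\in[\frac n5,\frac n4)$ and the natural multipliers $5,10$ share the factor $5$ with $n$. Here I would replace them by $m=6$ and $m=11$, both coprime to $n$ (the first because $\gcd(n,6)=1$). A direct residue count shows $m=6$ sends the sum to $3n$ exactly when $a>n/6$, while $m=11$ does so for $a\in(\frac n{11},\frac{2n}{11})$; since the pinch forces $a\in(\frac n{10},\frac n4)$, and in the remaining branch $a\in(\frac n{10},\frac n6]\subset(\frac n{11},\frac{2n}{11})$, these two multipliers together cover every $a$. The residual sliver is $a\le n/6$ together with $11\mid n$, which pins $n=5\cdot 11\cdot p_3$; I would then run through a short list of further coprime multipliers (e.g. $7,8,9,12,13$) lying in the admissible window $m\in(n/a,2n/a)\cap(2n/c,3n/c)$, and use $a>4e$ together with the prescribed pattern $\{\gcd(n,e),\gcd(n,c),\gcd(n,b),\gcd(n,a)\}=\{p_1,p_2,p_1p_3,p_2p_3\}$ and $n>1000$ to eliminate the finitely many surviving triples $(p_1,p_2,p_3)$.

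I expect this last step, the $5\mid n$ (and then $55\mid n$) cascade, to be the main obstacle: it is precisely where no single multiplier works for all $a$ at once, so one must both juggle several candidates to dodge the prime factors of $n$ and invoke the arithmetic constraints ($a>4e$, the gcd pattern, $n>1000$) to kill the sporadic small configurations — exactly the phenomenon seen in the $m_1=5,7$ analysis of Section 3.
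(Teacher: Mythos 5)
Your reduction to $m_0=\lceil n/b\rceil\in\{3,4,5\}$ and the multiplier pairs $(m_0,2m_0)$ are correct as far as they go, and they use the same mechanism as the paper (residue sums equal to $n$ or $3n$, then sufficient condition (1) from the Notice after Proposition 2.1). The genuine gap is the case you yourself flag as the ``residual sliver'': $m_0=5$, $5\mid n$, $11\mid n$, $a\in(\frac{n}{10},\frac{n}{6}]$. There you only propose to ``run through a short list of further coprime multipliers \dots to eliminate the finitely many surviving triples''; this is a plan, not a proof. In that case $n=5\cdot 11\cdot p_3$ ranges over infinitely many values, you never exhibit a multiplier that provably lies in the required window with the right residue pattern for all large $p_3$, and nothing you wrote shows the survivors form a finite set. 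As submitted, the lemma is therefore not proved.

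The irony is that the gap is self-inflicted: your own pinch inequality, applied one step more sharply, shows that the cases $m_0\in\{4,5\}$ are vacuous. Equality of the ceilings gives $\frac{n}{b}-\frac{n}{c}=\frac{n(a-e)}{bc}<1$; instead of converting this into $a<\frac{b}{2}+e$ and then into the lossy bound $b>\frac{n}{5}$, keep it as a bound on $c$: since $a-e>\frac{3a}{4}>\frac{3b}{8}$ (from $a>4e$ and $b<2a$), the inequality forces $\frac{3n}{8c}<1$, i.e.\ $c>\frac{3n}{8}$, hence $2<\frac{n}{c}<\frac{8}{3}$ and $m_0=\lceil n/c\rceil=3$ is the only possibility. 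This is exactly the paper's opening step, stated contrapositively: if $\frac{n}{c}>3$ then $\frac{n}{b}-\frac{n}{c}\geq\frac{2a}{3}\cdot\frac{n}{bc}>\frac{n}{3c}>1$, contradicting the ceiling equality. After that the paper finishes the whole lemma with the multipliers $3$ (when $3<\frac{n}{b}$, sum $=n$) and $6$ (when $\frac{n}{b}<3$, sum $=3n$), whose coprimality to $n$ is free from $\gcd(n,6)=1$, so no coprimality cascade can ever start. If you replace your $b$-based bound by this $c$-based bound, your cases $m_0=4,5$ --- and with them the entire $5\mid n$, $55\mid n$ discussion --- disappear, and your remaining $m_0=3$ argument (which splits on $a\geq n/3$ versus $a<n/3$ where the paper handles both at once with $m=6$ and a two-sided estimate) is complete and essentially identical to the paper's proof.
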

\begin{proof}

If $\frac{n}{c}>3$, then $\frac{n}{b}-\frac{n}{c}=\frac{(a-e)n}{bc}\geq\frac{2a}{3}\frac{n}{bc}>1$, a contradiction.

If $\frac{n}{c}\leq3<\frac{n}{b}$, we have $n<3c<2n$, $3a<3b<n$. Let $m=3$, then $\gcd(n,m)=1$ and
$|me|_n+|mc|_n+|m(n-b)|_n+|m(n-a)|_n=me+(mc-n)+(n-mb)+(n-ma)=n$, we have done.

If $\frac{n}{c}<\frac{n}{b}<3$, then $\frac{n}{3}<b<2a$, and $2n<6c<3n$, $2n<6b<3n$, $6a>3b>n$. $6e<2a<n$. Let $m=6$, then $\gcd(n,m)=1$, and $3n\geq|me|_n+|mc|_n+|m(n-b)|_n+|m(n-a)|_n\geq me+(mc-2n)+(3n-mb)+(2n-ma)=3n$, we have done.
\end{proof}

\section{Proof of Proposition 2.16}

In this section, we always assume that $\lceil\frac{n}{c}\rceil=\lceil\frac{n}{b}\rceil$, so $k_1\geq2$, and we also assume that $k_1<\frac{b}{a}$, hence $s\geq k_1$. Proposition 2.16 can be proved by the following Lemmas 5.1-5.6 and Lemma 5.9.

\begin{lemma}If the assumption is as in Proposition 2.16, then $k_1\leq7$.\end{lemma}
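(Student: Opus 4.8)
The plan is to argue by contradiction: assume $k_1\geq8$ and derive an impossible numerical inequality. The one fact I would extract from the definition of $k_1$ is the defining ceiling coincidence $\lceil\frac{(k_1-1)n}{c}\rceil=\lceil\frac{(k_1-1)n}{b}\rceil$. Writing $M$ for this common value, both $\frac{(k_1-1)n}{c}$ and $\frac{(k_1-1)n}{b}$ lie in the half-open unit interval $(M-1,M]$, so their difference is strictly less than $1$. Since $c-b=a-e$ (from $e+c=a+b$), this reads
\[
\frac{(k_1-1)n(a-e)}{bc}<1,\qquad\text{equivalently}\qquad (k_1-1)(a-e)<\frac{bc}{n}.
\]

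Next I would turn this into a bound on $k_1$ by feeding in the two standing size constraints. First, $a>4e$ gives $a-e>\frac{3}{4}a$, so dividing by $a$ yields $(k_1-1)\frac{3}{4}<\frac{bc}{an}$. Second, I bound $\frac{bc}{an}=\frac{b}{a}\cdot\frac{c}{n}$ factor by factor. By Lemma 2.9 we have $s\leq9$, whence $\frac{b}{a}<s+1\leq10$. For the second factor, set $N=\lceil\frac{n}{c}\rceil=\lceil\frac{n}{b}\rceil$; since $b\leq\frac{n}{2}$ and $n$ is odd we get $\frac{n}{b}>2$, so $N\geq3$, and then $\frac{n}{c}>N-1\geq2$ forces $\frac{c}{n}<\frac{1}{N-1}\leq\frac{1}{2}$. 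Combining, $\frac{bc}{an}<10\cdot\frac{1}{2}=5$.

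The contradiction is then immediate: for $k_1\geq8$ the left side satisfies $(k_1-1)\frac{3}{4}\geq7\cdot\frac{3}{4}=\frac{21}{4}>5$, while the chain above gives $(k_1-1)\frac{3}{4}<\frac{bc}{an}<5$. Hence $k_1\geq8$ is impossible, so $k_1\leq7$.

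I expect the only delicate point to be the tightness of the final numerical margin ($\frac{21}{4}$ against $5$): the estimate has essentially no slack, so each of the three inputs must be used at full strength. In particular I would be careful that (i) the coincidence of ceilings gives a \emph{strict} length bound $<1$, not $\leq1$; (ii) the hypothesis is $a>4e$ and not merely $a>2e$, since the constant $\frac{3}{4}$ is exactly what makes $7\cdot\frac{3}{4}>5$; and (iii) $N\geq3$ genuinely holds — this rests on $b\leq\frac{n}{2}$ together with $n$ being odd, which upgrades $\lceil\frac{n}{b}\rceil\geq2$ to $\geq3$ and is what pins $\frac{c}{n}$ below $\frac{1}{2}$. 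Relaxing any one of these (e.g. only $N\geq2$, or only $a>2e$) would collapse the bound and permit $k_1$ as large as $13$ or $14$, so no step can be weakened.
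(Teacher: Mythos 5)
Your proof is correct and is essentially the paper's own argument: the paper likewise assumes $k_1\geq 8$ and derives the contradiction $\frac{(k_1-1)n}{b}-\frac{(k_1-1)n}{c}\geq\frac{7(a-e)n}{bc}\geq\frac{21an}{4bc}>\frac{21}{20}>1$, which is exactly your chain (ceiling coincidence forces the difference below $1$, $a>4e$ gives the factor $\frac{3}{4}$, and $\frac{b}{a}<10$ together with $\frac{c}{n}<\frac{1}{2}$ gives $\frac{bc}{an}<5$) written in one line. The only cosmetic difference is that you rederive $c<\frac{n}{2}$ from the common ceiling $N\geq3$, whereas the paper has $c<\frac{n}{2}$ directly from the normalization of the sequence in Section 2.
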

\begin{proof}If $k_1\geq8$, then  $\frac{(k_1-1)n}{b}-\frac{(k_1-1)n}{c}\geq\frac{(a-e)7n}{bc}\geq\frac{21an}{4bc}>\frac{21}{20}>1$, a contradiction.\end{proof}

\begin{lemma}If the assumption is as in Proposition 2.16 and $k_1=7$, then $\ind(S)=1$.\end{lemma}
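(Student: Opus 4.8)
The plan is to show that the very data defining $k_1=7$ already force the hypotheses of Lemma 2.2(1) with $k=7$, so that the only way to fail is a coprimality obstruction, which then reduces to a finite check. First I would record the two consequences of $k_1=7$. By definition there is an integer $m$ with $\frac{7n}{c}\le m<\frac{7n}{b}$, and $\lceil\frac{6n}{c}\rceil=\lceil\frac{6n}{b}\rceil$, so that $\frac{6n}{b}-\frac{6n}{c}=\frac{6n(a-e)}{bc}<1$. Hence the interval $I=[\frac{7n}{c},\frac{7n}{b})$ has length $\frac{7n(a-e)}{bc}<\frac76<2$, and therefore contains exactly one or exactly two integers. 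Moreover, since we are in the situation of Proposition 2.16 we have $s\ge k_1=7$, i.e. $b\ge 7a$; consequently every integer $m\in I$ satisfies
\[
 ma<\frac{7n}{b}\,a=\frac{7a}{b}\,n\le n .
\]
Thus $ma<n$ holds automatically, and $k=7\le b$ is clear.

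Next I would dispose of the generic case. If some integer $m\in I$ is coprime to $n$, then $m$ and $k=7$ satisfy all the hypotheses of Lemma 2.2(1) (namely $\frac{7n}{c}\le m<\frac{7n}{b}$, $\gcd(m,n)=1$, $1\le 7\le b$, and $ma<n$), so $\ind(S)=1$. Hence from now on I may assume that every integer of $I$ shares a prime factor with $n=p_1p_2p_3$. When $I$ contains two consecutive integers $m,m+1$, coprimality of $m$ and $m+1$ forces two \emph{distinct} primes of $\{p_1,p_2,p_3\}$ to divide them; when $I$ contains a single integer, exactly one prime divides it.

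The remaining case is the genuinely hard one, and I would treat it by the bookkeeping used for the $m_1=5,7$ analyses of Section 3. The point is that a prime $p_i\mid m$ forces the integer $m/p_i$ to lie in $[\frac{7(n/p_i)}{c},\frac{7(n/p_i)}{b})$, whose length is $\frac{1}{p_i}\cdot\frac{7n(a-e)}{bc}<\frac{7}{6p_i}$; for the larger primes this interval is far too short to contain an integer unless $b$ and $c$ are severely constrained. Combining this with the gcd pattern $\{\gcd(n,e),\gcd(n,c),\gcd(n,b),\gcd(n,a)\}=\{p_1,p_2,p_1p_3,p_2p_3\}$, with $7a\le b<10a$ (from $7\le s\le 9$), and with $a>4e$ (indeed $a\ge 10e$ when $e\in\{p_2,2p_2\}$), I expect to pin $(p_1,p_2,p_3)$ down to finitely many small triples. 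For each surviving triple I would exhibit an explicit multiplier $M$ with $\gcd(M,n)=1$ and verify either the first sufficient condition listed after Proposition 2.1, namely $|Me|_n+|Mc|_n+|M(n-b)|_n+|M(n-a)|_n=3n$, or the hypothesis of Lemma 2.2(2), namely $M\le\frac{n}{2e}$ together with at least two of $|Ma|_n>\frac n2$, $|Mb|_n>\frac n2$, $|Mc|_n<\frac n2$.

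The main obstacle is precisely this last step. The clean interval argument breaks only when all one or two integers of $I$ fail to be coprime to $n$, and certifying $\ind(S)=1$ there cannot be done uniformly: it requires the finite small-prime case analysis together with hand-chosen multipliers $M$, and the delicate part is checking that the constraints $7\le s\le 9$ and $a>4e$ leave no admissible triple $(p_1,p_2,p_3)$ uncovered, exactly in the spirit of Lemmas 2.10--2.13 and Lemmas 3.3--3.4.
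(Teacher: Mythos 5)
Your opening moves are sound and agree with the paper's framework: from $\lceil\frac{6n}{c}\rceil=\lceil\frac{6n}{b}\rceil$ you correctly get $\frac{6n(a-e)}{bc}<1$, hence the window $[\frac{7n}{c},\frac{7n}{b})$ has length $<\frac{7}{6}$ and holds one or two integers; each such integer $m$ satisfies $ma<n$ automatically because $b\ge 7a$, so Lemma 2.2(1) disposes of the case where some $m$ in the window is coprime to $n$. The genuine gap is that everything after this point is a plan rather than a proof. You never determine \emph{which} integers can actually lie in the window, so the ``finite check'' you promise is neither finite nor begun: your scaled-interval remark (that $p_i\mid m$ places $m/p_i$ in an interval of length $<\frac{7}{6p_i}$) constrains nothing until the size of $m$ is known, and you concede yourself that you have not verified that the constraints $7\le s\le 9$ and $a>4e$ leave no admissible triple $(p_1,p_2,p_3)$ uncovered. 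That verification is precisely the content of the lemma in the non-generic case, so the proposal as written does not prove the statement.

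The missing idea is a localization that the paper establishes first and that makes the whole case analysis concrete: if $c\le\frac{9n}{20}$, then $\frac{6n(a-e)}{bc}\ge\frac{9an}{2bc}\ge\frac{10a}{b}>1$ (using $a-e>\frac{3a}{4}$ from $a>4e$, and $b<10a$ from $s\le 9$), contradicting $\frac{6n(a-e)}{bc}<1$; hence $\frac{n}{c}<\frac{20}{9}$, and since $b<c<\frac{n}{2}$ and the ceilings at $k=6$ agree, either $12<\frac{6n}{c}<\frac{6n}{b}\le 13$ or $13<\frac{6n}{c}<\frac{6n}{b}\le 14$. This pins the integer in the $k_1$-window down to $15$ in the first case and $16$ in the second. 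The second case is then immediate: $m=16$ is coprime to the odd $n$ and $16a<n$, so Lemma 2.2(1) applies. In the first case the coprimality obstruction can only be $5\mid n$ (since $\gcd(n,6)=1$), and the paper escalates through the $k=8,9,10$ windows, which it argues contain $17,19,21$; unless $n=5\cdot 17\cdot 19$, one of $m=15,17,19$ works, and the single remaining modulus $n=5\cdot 17\cdot 19$ is finished with $m=21,k=10$ when $21a<n$ and with the multiplier $M=12$ otherwise. Note where $s\le 9$ and $a>4e$ actually enter: not as side constraints on a hypothetical list of triples, but as the engine of the bound $\frac{n}{c}<\frac{20}{9}$. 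Without that step your argument cannot identify the obstructing primes, and the proof does not close.
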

\begin{proof}
If $c\leq\frac{9n}{20}$, we have  $\frac{(k_1-1)n}{b}-\frac{(k_1-1)n}{c}\geq\frac{(a-e)6n}{bc}\geq\frac{9an}{2bc}\geq\frac{10a}{b}>1$, a contradiction.
Thus $\frac{n}{c}<\frac{20}{9}$ and we infer that $12<\frac{6n}{c}<\frac{6n}{b}\leq13$ or $13<\frac{6n}{c}<\frac{6n}{b}\leq14$.

{\bf Case 1.} It holds that $12<\frac{6n}{c}<\frac{6n}{b}\leq13$.  Then $a<\frac{n}{19}$ and we have \beqs &14<\frac{7n}{c}\leq 15<\frac{7n}{b}\leq\frac{91}{6},\\
&16<\frac{8n}{c}\leq 17<\frac{8n}{b}\leq\frac{52}{3},\\
&18<\frac{9n}{c}\leq 19<\frac{9n}{b}\leq\frac{39}{2},\\
&20<\frac{10n}{c}\leq 21<\frac{10n}{b}\leq\frac{65}{3}.\eeqs

If $\gcd(n,15)=1$, let $m=15$ and $k=7$, then we have done. If $\gcd(n,17)=1$, let $m=17$ and $k=8$, then we have done. If $\gcd(n,19)=1$, let $m=19$ and $k=9$, then we have done.

Now assume that $n=5\times 17\times19$ and thus $\gcd(n,21)=1$. If $21a<n$, let $m=21$ and $k=10$, then we have done.
If $21a>n$, let $M=12$, we have $|M(n-a)|_n<\frac{n}{2}$ and $|Me|_n<\frac{n}{2}$. Moreover, $5n+\frac{7n}{13}=\frac{72n}{13}<12b<\frac{28n}{5}=5n+\frac{3n}{5}$, which implies that $|M(n-b)|_n<\frac{n}{2}$. So $\ind(S)=1$.

{\bf Case 2.} It holds that $13<\frac{6n}{c}<\frac{6n}{b}\leq14$.  Then $a<\frac{n}{22}$  and $\frac{91}{6}<\frac{7n}{c}< 16<\frac{7n}{b}\leq\frac{49}{3}$. Let $m=16$ and $k=7$, we have $\gcd(n,m)=1$ and $ma<n$, then $\ind(S)=1$.
\end{proof}

\begin{lemma}If the assumption is as in Proposition 2.16 and $k_1=6$, then $\ind(S)=1$.\end{lemma}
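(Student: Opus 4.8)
The plan is to follow the template established by the $k_1=7$ case in Lemma 5.2, exploiting the constraint $k_1<\frac{b}{a}$ (equivalently $s\geq k_1=6$) together with the basic inequality $\frac{(k_1-1)n}{b}-\frac{(k_1-1)n}{c}=\frac{(a-e)(k_1-1)n}{bc}\leq 1$, which holds because $k_1$ is the \emph{largest} integer for which the relevant ceilings agree. First I would use this inequality with $k_1=6$ to bound $\frac{n}{c}$ from above, forcing $\frac{6n}{c}$ into a narrow window of a few consecutive integers. Since $a>4e$ gives $a-e>\frac{3a}{4}$, and the assumption $s\geq 6$ gives $b\geq 6a$, I would combine these to show that $\frac{6n}{c}$ and $\frac{6n}{b}$ lie between consecutive integers differing by at most one or two, yielding a short list of cases analogous to the two cases ($12<\frac{6n}{c}<\frac{6n}{b}\leq 13$, etc.) appearing in Lemma 5.2.

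In each resulting window I would scale up: for a candidate multiplier $k\in\{6,7,8,9,\dots\}$, examine the interval $[\frac{kn}{c},\frac{kn}{b})$ and locate an integer $m$ inside it. Because $n=p_1p_2p_3$ has only three prime factors, at most three of the small integers in these intervals can fail to be coprime to $n$, so generically I can choose $m$ coprime to $n$ with $m<\frac{kn}{b}$ and $ma<n$; then Lemma 2.2(1) (Lemma 3.1 in the renumbered statement) gives $\ind(S)=1$ directly. The bound $a<\frac{n}{(\text{something})}$ extracted from the window, combined with $s\leq 9$ (from the renumbering and Lemma 2.9), controls how large $k$ must be taken and rules out the intervals being too sparse.

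The main obstacle will be the exceptional sub-cases where every integer in the available intervals shares a factor with $n$, forcing $n$ to be a specific product like $5\times p_2\times p_3$; here the multiplier trick of Lemma 2.2(1) fails and I must instead invoke the second sufficient condition, Lemma 2.3(2) (the $M$-condition), choosing an explicit $M\in[1,\frac{n}{2e}]$ so that at least two of $|Ma|_n>\frac{n}{2}$, $|Mb|_n>\frac{n}{2}$, $|Mc|_n<\frac{n}{2}$ hold. Concretely, when $ka>n$ blocks the first approach, I would pick $M$ a suitable multiple (as $M=12$ was used in Lemma 5.2) and verify the placement of $Mb,Mc,Ma$ relative to multiples of $n$ using the window bounds, so that $|Me|_n+|Mc|_n+|M(n-b)|_n+|M(n-a)|_n$ collapses to an integer multiple of $n$. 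Finally, for the genuinely rigid triples $(p_1,p_2,p_3)$ that survive, I would derive a contradiction either from $n>1000$, from the divisibility constraints forcing $e<\frac{a}{4}$ (so $e=p_1$), or from $s\leq 9$ conflicting with the forced ratio $\frac{b}{a}$, exactly as the $n=5\times 17\times 19$ analysis is resolved in Lemma 5.2.
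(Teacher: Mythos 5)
Your plan reproduces the paper's own proof of this lemma essentially step for step: bound $\frac{n}{c}$ from the ceiling-gap inequality at level $k_1-1$ (the paper gets $\frac{n}{c}<\frac{8}{3}$), split into a short list of integer windows, look for a coprime multiplier $m$ in a scaled interval $[\frac{kn}{c},\frac{kn}{b})$ with $ma<n$ so that Lemma 2.2(1) applies, fall back on the $M$-condition (the sum $|Me|_n+|Mc|_n+|M(n-b)|_n+|M(n-a)|_n$ collapsing to a multiple of $n$, e.g.\ $M=9$ when $18a>n$) when that is blocked, and kill the one rigid modulus that survives ($n=5\times13\times17$) via the renumbering constraints and the bound $s\leq 9$. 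One indexing slip to fix when you write out the details: for $k_1=6$ the windows pinned down by the ceiling condition are those of $\frac{5n}{c}$ and $\frac{5n}{b}$ (level $k_1-1=5$, giving the paper's four cases $10<\frac{5n}{c}<\frac{5n}{b}\leq 11$ through $13<\frac{5n}{c}<\frac{5n}{b}\leq\frac{40}{3}$), not of $\frac{6n}{c}$ and $\frac{6n}{b}$ as you wrote --- by the very definition of $k_1=6$ the interval $[\frac{6n}{c},\frac{6n}{b})$ \emph{does} contain an integer, so those two quantities can never lie strictly between consecutive integers.
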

\begin{proof}
Similar to Lemma 5.2, we have $\frac{n}{c}<\frac{8}{3}$ and we infer that $10<\frac{5n}{c}<\frac{5n}{b}\leq11$, or $11<\frac{5n}{c}<\frac{5n}{b}\leq12$, or $12<\frac{5n}{c}<\frac{5n}{b}\leq13$, or $13<\frac{5n}{c}<\frac{5n}{b}\leq\frac{40}{3}$.

{\bf Case 1.} It holds that $10<\frac{5n}{c}<\frac{5n}{b}\leq11$, then $a<\frac{n}{16}$.  If $18a>n$.
Let $M=9$, we infer that $|M(n-a)|_n<\frac{n}{2}$, $|Me|_n<\frac{n}{2}$ and $|Mc|_n<\frac{n}{2}$(exactly, $4n+\frac{n}{11}<9c<4n+\frac{n}{2}$). Then $\ind(S)=1$. Moreover, we have
\beqs \frac{6n}{c}\leq13<\frac{6n}{b},&\frac{7n}{c}\leq15<\frac{7n}{b},& \frac{8n}{c}\leq17<\frac{8n}{b}.\eeqs
If $\gcd(n,13)=1$, let $m=13$ and $k=6$, if $\gcd(n,15)=1$, let $m=15$ and $k=7$, if $\gcd(n,17)=1$, let $m=17$ and $k=8$, then $ma<n$ and $\ind(S)=1$. If none of the three integers is co-prime to $n$, then $n=5\times13\times17$ and $p_1=13, p_2=17, p_3=5$.

By the renumbering process, we may assume that $17\leq e\leq\frac{a}{10}$ or $a\geq 4\times17$. If $17\leq e\leq\frac{a}{10}$, then $e\leq \frac{n}{180}<10$, a contradiction. If $a\geq 4\times17$, then $a\geq\frac{4n}{5\times13}>\frac{n}{17}$, a contradiction.

{\bf Case 2.} It holds that $11<\frac{5n}{c}<\frac{5n}{b}\leq12$. Then $a<\frac{4}{3}(a-e)=\frac{4}{3}(c-b)<\frac{4}{3}(\frac{5n}{11}-\frac{n}{12})<\frac{n}{19}$.
If $\frac{8n}{c}<18<\frac{8n}{b}$, let $m=18$ and $k=8$, then we have done. Otherwise, it holds  $18<\frac{8n}{c}\leq19<\frac{8n}{b}\leq\frac{96}{5}$, and thus $a<\frac{4}{3}(\frac{4n}{9}-\frac{5}{12})=\frac{n}{27}$. We infer that $\frac{b}{a}>\frac{5n}{12}\times\frac{27}{n}>10$, which contradicts to $s\leq9$.

{\bf Case 3.} It holds that $12<\frac{5n}{c}<\frac{5n}{b}\leq13$. Then $a<\frac{4}{3}(a-e)=\frac{4}{3}(c-b)<\frac{4}{3}(\frac{5n}{12}-\frac{n}{13})<\frac{n}{23}$.
If $\frac{7n}{c}<18<\frac{7n}{b}$, let $m=18$ and $k=7$, then we have done. Otherwise, it holds  $\frac{84}{5}<\frac{7n}{c}\leq17<\frac{7n}{b}\leq18$, and thus $a<\frac{4}{3}(\frac{5n}{12}-\frac{7}{18})=\frac{n}{27}$. We infer that $\frac{b}{a}>\frac{7n}{18}\times\frac{27}{n}>10$, which contradicts to $s\leq9$.

{\bf Case 4.} It holds that $13<\frac{5n}{c}<\frac{5n}{b}\leq\frac{40}{3}$.
Then $a<\frac{4}{3}(a-e)=\frac{4}{3}(c-b)<\frac{4}{3}(\frac{5n}{13}-\frac{3n}{8})=\frac{n}{78}$. Since $s\geq k_1=6$, by Lemma 2.7, we have $b>\frac{n}{4}$, thus $\frac{b}{a}>\frac{78}{4}>19$, which contradicts to $s\leq9$.
\end{proof}

\begin{lemma}If the assumption is as in Proposition 2.16 and $k_1=5$, then $\ind(S)=1$.\end{lemma}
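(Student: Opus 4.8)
The plan is to follow the template of Lemmas 5.2 and 5.3, first confining $c$ to a narrow range and then disposing of finitely many residual configurations by the two criteria of Lemma 2.2. Since $k_1=5$, the defining property of $k_1$ gives $\lceil\frac{4n}{c}\rceil=\lceil\frac{4n}{b}\rceil$, so $\frac{4n}{b}-\frac{4n}{c}<1$. Recalling that the renumbering yields $a>4e$, whence $a-e>\frac34a$ and $c-b=a-e$, and that $s\leq9$ gives $b<10a$, I would first rule out $c\leq\frac{3n}{10}$: in that case $\frac{4n}{b}-\frac{4n}{c}=\frac{4(a-e)n}{bc}\geq\frac{3a}{b}\cdot\frac{n}{c}\geq\frac{10a}{b}>1$, contradicting the equality of the two ceilings. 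Hence $\frac{n}{c}<\frac{10}{3}$, and since $c<\frac n2$ forces $\frac{n}{c}>2$, the quantity $\frac{4n}{c}$ lies in $(8,\frac{40}{3})$. As $\lceil\frac{4n}{c}\rceil=\lceil\frac{4n}{b}\rceil=N$, both $\frac{4n}{c}$ and $\frac{4n}{b}$ occupy a single interval $(N-1,N]$ with $N\in\{9,10,11,12,13,14\}$, and the proof reduces to these cases.

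In each case I would extract a numerical upper bound on $a$ from $a<\frac43(c-b)$ together with the case-defining bounds on $\frac{4n}{c}$ and $\frac{4n}{b}$, and a lower bound $b>\frac n6$ from Lemma 2.6 (valid since $s\geq k_1=5\geq4$ and assumption (B) is in force), sharpened to $b>\frac n4$ by Lemma 2.7 whenever $s\geq6$. I would then inspect the integers lying in $[\frac{kn}{c},\frac{kn}{b})$ for $k=5$ and, where needed, for the next few values of $k$. Whenever one such integer $m$ is coprime to $n$ and satisfies $ma<n$, Lemma 2.2(1) gives $\ind(S)=1$ at once. If the interval offers no admissible multiplier, I would instead pass to Lemma 2.2(2): taking $M$ to be a small multiple of $k$ (often $M=2k$ or a nearby value) and verifying that at least two of $|Ma|_n>\frac n2$, $|Mb|_n>\frac n2$, $|Mc|_n<\frac n2$ hold, which again forces $\ind(S)=1$.

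I expect the main obstacle to be exactly the configurations in which every integer in the relevant interval shares a prime factor with $n$, so that Lemma 2.2(1) is unusable. As in the $m_1=5$ analysis of Lemma 3.3, the fact that such an interval contains only a couple of integers, all divisible by small primes of $n$, pins $n$ down to a product of three specific small primes; I would then contradict either $n>1000$, or the renumbering constraints $a>4e$ with $e\in\{p_1,p_2,2p_2\}$, or---when the bound on $a$ is strong enough to give $\frac ba>10$---the inequality $s\leq9$. Because $k_1=5$ admits more residual intervals than the $k_1=6,7$ cases, the bookkeeping needed to cover each such failure with a correctly chosen $M$ (or to exclude it outright) is the delicate part of the argument.
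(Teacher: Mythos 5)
Your outline reproduces the paper's skeleton exactly: the same derivation of $\frac{n}{c}<\frac{10}{3}$ from the ceiling equality at $k_1-1=4$ combined with $a-e>\frac34a$ and $b<10a$, the same split into unit intervals $8+t<\frac{4n}{c}<\frac{4n}{b}\leq 9+t$ plus a tail case, and the same closing moves (Lemma 2.2(1) applied to integers of $[\frac{kn}{c},\frac{kn}{b})$, pinning $n$ to a product of specific small primes, contradictions with $n>1000$, $a>4e$, or $s\leq9$). The steps you actually carry out are correct. The problem is that everything after the decomposition is deferred to ``bookkeeping,'' and that bookkeeping \emph{is} the proof: which integers lie in which intervals, whether $ma<n$ holds for them, and what to do when it fails, is where all the content of the paper's argument sits.

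Moreover, at the two places where this case analysis genuinely needs an extra idea, your stated toolkit would stall. First, in the case $8<\frac{4n}{c}<\frac{4n}{b}\leq9$ the candidate multipliers are $11,13,15$ (in the $k=5,6,7$ intervals), and one only knows $a<\frac{2n}{27}$; when $11\cdot 13\mid n$ one is forced to $m=15$, for which $ma<n$ may fail. The paper first disposes of the subcase $16a>n$ using the criterion $|Me|_n+|Mc|_n+|M(n-b)|_n+|M(n-a)|_n=3n$ with $M=16$ (condition (1) of the Notice in Section 2) --- a tool you never list. Your plan of invoking Lemma 2.2(2) with $M$ a small multiple of $k$ fails there: $|16a|_n=16a-n<\frac{n}{2}$, and the remaining two requirements $|16b|_n>\frac{n}{2}$, $|16c|_n<\frac{n}{2}$ are incompatible since $b<c$. (The subcase can be rescued inside Lemma 2.2(2) by the non-obvious choice $M=9$, for which $|9a|_n=9a>\frac{9n}{16}$ and $|9c|_n=9c-4n<\frac{n}{2}$, but exhibiting such a choice is exactly the work a proof must contain.) Second, in your case $N=14$ the bounds you propose give only $c-b<\frac{4n}{13}-\frac{2n}{7}=\frac{2n}{91}$, hence $a<\frac{8n}{273}$ and $\frac{b}{a}\geq\frac{2/7}{8/273}=9.75$, which does \emph{not} contradict $s\leq9$; the paper kills this case by the maximality of $k_1$ (the hypotheses force $39<\frac{12n}{c}<\frac{12n}{b}\leq40$, whence $k_1\geq13$), an idea that appears nowhere in your proposal. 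So while your strategy is the right one and matches the paper's, the proposal as written does not close precisely those configurations for which the lemma is nontrivial.
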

\begin{proof}
Similar to Lemma 5.2, we have $\frac{n}{c}<\frac{10}{3}$, then $8+t<\frac{4n}{c}<\frac{4n}{b}\leq9+t$ for some $t\in[0,4]$ or $13<\frac{4n}{c}<\frac{4n}{b}\leq\frac{40}{3}$. We distinguish six cases.

{\bf Case 1.}  It holds that $13<\frac{4n}{c}<\frac{4n}{b}\leq\frac{40}{3}$.
We have $39<\frac{12n}{c}<\frac{12n}{b}\leq40$, which contradicts to the maximality of $k_1$.

{\bf Case 2.}  $t=0$.

It holds that $8<\frac{4n}{c}<\frac{4n}{b}\leq9$, and we infer that $a<\frac{n}{13}$. Moreover, we have
\beqs \frac{5n}{c}\leq11<\frac{5n}{b},&\frac{6n}{c}\leq13<\frac{6n}{b},& \frac{7n}{c}\leq15<\frac{7n}{b}<16.\eeqs
If $16a>n$, let $M=16$, then $Me<n$, $n<Ma<2n$ and $7n<Mb<Mc<8n$. We infer that $|Me|_n+|Mc|_n+|M(n-b)|_n+|M(n-a)|_n=3n$, and $\ind(S)=1$. Thus $16a<n$. If $\gcd(n,11)=1$, let $m=11$ and $k=5$, if $\gcd(n,13)=1$, let $m=13$ and $k=6$, if $\gcd(n,15)=1$, let $m=15$ and $k=7$, then $ma<n$ and $\ind(S)=1$.
If none of $11, 13, 15$ is co-prime to $n$, then $n=5\times11\times13=715<1000$, a contradiction.

{\bf Case 3.}  $t=1$.

It holds that $9<\frac{4n}{c}<\frac{4n}{b}\leq10$. We infer that $a<\frac{n}{16}$ and $\frac{45}{4}<\frac{5n}{c}<12<\frac{5n}{b}\leq\frac{25}{2}$. Let $m=12$ and $k=5$, then we have done.

{\bf Case 4.}  $t=2$.

It holds that $10<\frac{4n}{c}<\frac{4n}{b}\leq11$. We infer that $a<\frac{n}{20}$ and $15<\frac{6n}{c}<16<\frac{6n}{b}\leq\frac{33}{2}$. Let $m=16$ and $k=6$, then we have done.

{\bf Case 5.}  $t=3$.

It holds that $11<\frac{4n}{c}<\frac{4n}{b}\leq12$, and we infer that $a<\frac{n}{24}$. Moreover, we have
\beqs \frac{5n}{c}\leq14<\frac{5n}{b}\leq15,&\frac{6n}{c}\leq17<\frac{6n}{b}\leq18,& \frac{7n}{c}\leq20<\frac{7n}{b}\leq21.\eeqs
If $\gcd(n,14)=1$, let $m=14$ and $k=5$, if $\gcd(n,17)=1$, let $m=17$ and $k=6$, if $\gcd(n,20)=1$, let $m=20$ and $k=7$, then $ma<n$ and $\ind(S)=1$. If none of the three integers is co-prime to $n$, then $n=5\times7\times17=595<1000$, a contradiction.

{\bf Case 6.}  $t=4$.

It holds that $12<\frac{4n}{c}<\frac{4n}{b}\leq13$, then $a<\frac{4}{3}(\frac{n}{12}-\frac{n}{13})<\frac{n}{29}$ and
$15<\frac{5n}{c}<16<\frac{5n}{b}\leq\frac{65}{4}$. Let $m=16$ and $k=5$, we have $\ind(S)=1$.
\end{proof}

\begin{lemma}If the assumption is as in Proposition 2.16 and $k_1=4$, then $\ind(S)=1$.\end{lemma}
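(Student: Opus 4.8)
The plan is to follow the template of Lemmas 5.2--5.4, specialised to $k_1=4$. The first step is to bound $c$ from below. Since $k_1=4$ forces $\lceil\frac{3n}{c}\rceil=\lceil\frac{3n}{b}\rceil$, the two quantities $\frac{3n}{c}$ and $\frac{3n}{b}$ lie in a common unit interval, so $\frac{3n}{b}-\frac{3n}{c}=\frac{3(a-e)n}{bc}<1$. If instead $c\leq\frac{9n}{40}$, then using $a-e>\frac{3a}{4}$ (a consequence of $a>4e$) and $b<10a$ (a consequence of $s\leq9$, Lemma 2.9) I get $\frac{3(a-e)n}{bc}\geq\frac{9an}{4bc}\geq\frac{10a}{b}>1$, a contradiction. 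Hence $\frac{n}{c}<\frac{40}{9}$, and since $c<\frac{n}{2}$ always gives $\frac{n}{c}>2$, the ratio $\frac{3n}{c}$ is confined to $(6,\frac{40}{3})$.

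Next I would split according to the integer window containing $\frac{3n}{c}$ and $\frac{3n}{b}$, namely $6+t<\frac{3n}{c}<\frac{3n}{b}\leq 7+t$ for $t\in[0,6]$, together with the residual range $13<\frac{3n}{c}<\frac{3n}{b}\leq\frac{40}{3}$. The residual range I would dispose of exactly as in Case 1 of Lemma 5.4: rescaling by $3$ forces $\frac{9n}{c}$ and $\frac{9n}{b}$ into $(39,40]$, which contradicts the maximality of $k_1$. In each genuine case $t\in[0,6]$, the assumption pins $b$ and $c$ between fixed multiples of $n$ and yields $a<\frac{4}{3}(c-b)<\frac{4n}{(6+t)(7+t)}$; in particular an integer $m$ in the window $[\frac{4n}{c},\frac{4n}{b})$ (which exists because $k_1=4$) automatically satisfies $ma<\frac{4n}{b}\cdot a<\frac{16n}{3(6+t)}<n$.

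Consequently, in each case the whole difficulty is \emph{coprimality}: if some integer $m$ lying in one of the windows $[\frac{kn}{c},\frac{kn}{b})$ with $k\in\{4,5,\dots\}$ is coprime to $n$, then the bound on $a$ gives $ma<n$ and Lemma 2.2(1) yields $\ind(S)=1$. If every candidate integer shares a prime factor with $n=p_1p_2p_3$, I would argue as in Lemmas 5.3--5.4: the forced divisibilities either determine $(p_1,p_2,p_3)$ completely, in which case $n>1000$, the prescribed gcd-pattern $\{p_1,p_2,p_1p_3,p_2p_3\}$, and the inequalities $a>4e$, $s\leq9$ rule the triple out; or else I would pass to the weighted-sum device, choosing $M\in[1,\frac{n}{2e}]$ with $\gcd(M,n)=1$ so that $|Me|_n+|Mc|_n+|M(n-b)|_n+|M(n-a)|_n=3n$ (equivalently, so that at least two of the inequalities of Lemma 2.2(2) hold), whence $\ind(S)=1$ by the sufficient condition (1) recorded after Proposition 2.1.

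I expect the main obstacle to be precisely these degenerate subcases in which no coprime integer is available in the relevant windows. There the argument turns into finite but delicate bookkeeping: one must exhibit a multiplier $M$ for which all four residues $|Me|_n,|Mc|_n,|M(n-b)|_n,|M(n-a)|_n$ land on the prescribed side of $\frac{n}{2}$, and simultaneously verify, using $e<\frac{a}{4}$, $b<10a$, and the exact location of $b$ and $c$ in the current case, that the only surviving triples $(p_1,p_2,p_3)$ either violate $n>1000$ or are incompatible with $a>4e$ and the gcd-pattern. Managing these finitely many exceptional moduli, rather than the generic step via Lemma 2.2(1), is where the real work lies.
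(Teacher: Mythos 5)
You reproduce the paper's frame exactly: the bound $\frac{n}{c}<\frac{40}{9}$ from $a-e>\frac{3a}{4}$ and $b<10a$, the decomposition $6+t<\frac{3n}{c}<\frac{3n}{b}\leq 7+t$ with $t\in[0,6]$ plus the residual strip $(13,\frac{40}{3}]$ killed by rescaling, and the two closing mechanisms (a coprime integer in some window $[\frac{kn}{c},\frac{kn}{b})$ with $ma<n$ via Lemma 2.2(1), or a multiplier $M$ realizing a sum identity / the at-most-one-small-residue criterion, with $s\leq 9$ and $n>1000$ as the sources of contradiction). This is precisely how the paper's proof of this lemma is organized, so your strategy is the right one.

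However, the proposal stops where the proof starts: the entire content of the paper's argument is the execution of the seven cases, and you defer all of it under the label of ``finite but delicate bookkeeping,'' exhibiting no concrete multiplier, no refined bound on $a$, and no terminal contradiction for any degenerate configuration. For instance, at $t=2$ the paper must deduce $n=7\cdot 11\cdot 17$ from $\gcd(n,11)>1$, $7\mid n$, $\gcd(n,17)>1$, then split on the position of $\frac{7n}{c}$ against $19,20,21$, sharpening to $a<\frac{n}{30}$ or $a<\frac{n}{21}$ before the $k=7$ window may be invoked, and in the worst sub-subcase produce $M=27$ together with the chain $\frac{3n}{8}>c>\frac{10n}{27}>\frac{7n}{20}>b>\frac{n}{3}$; at $t=1$ it needs $M=18$ and the identity $Me+(Mc-7n)+(7n-Mb)+(n-Ma)=n$; at $t=4$ and $t=6$ the endgame is a contradiction with $s\leq9$ or with $n=5\cdot7\cdot17<1000$. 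Moreover, the one generic claim you do make is false as stated: a coprime $m$ in the window $[\frac{kn}{c},\frac{kn}{b})$ need not satisfy $ma<n$ once $k\geq5$, since the case-entry bounds only give $ma<\frac{4kn}{3(6+t)}$, which exceeds $n$ already for $t=2$, $k=7$ --- the very window the paper is forced to use there, and the reason it first sharpens the bound on $a$. (That this bookkeeping is genuinely delicate is underlined by the paper itself: in that $t=2$ sub-subcase its assertion $|Mc|_n>\frac{n}{2}$ sits badly with its own bound $10n<27c<\frac{81n}{8}$, which gives $|27c|_n<\frac{n}{8}$.) So: same approach as the paper, but the lemma is not proved; what is missing is exactly what the lemma is about.
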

\begin{proof}
Similar to Lemma 5.2, we have $\frac{n}{c}<\frac{40}{9}$,  then $6+t<\frac{3n}{c}<\frac{3n}{b}\leq7+t$ for some $t\in[0,6]$ or $13<\frac{3n}{c}<\frac{3n}{b}\leq\frac{40}{3}$. We distinguish eight cases.

{\bf Case 1.}  It holds that $13<\frac{3n}{c}<\frac{3n}{b}\leq\frac{40}{3}$.
We have $39<\frac{9n}{c}<\frac{9n}{b}\leq40$, which contradicts to the maximality of $k_1$.

{\bf Case 2.}  $t=0$.

It holds that $6<\frac{3n}{c}<\frac{3n}{b}\leq7$. We infer that $a<\frac{n}{10}$ and $8<\frac{4n}{c}<9<\frac{4n}{b}\leq\frac{28}{3}$. Let $m=9$ and $k=4$, we have $\ind(S)=1$.

{\bf Case 3.}  $t=1$.

It holds that $7<\frac{3n}{c}<\frac{3n}{b}\leq8$, and we infer that $a<\frac{n}{13}$.
If $\frac{5n}{c}<12<\frac{5n}{b}$, let $m=12$ and $k=5$, then $\ind(S)=1$. We may assume that $12<\frac{5n}{c}\leq13<\frac{5n}{b}<\frac{40}{3}$, then $a<\frac{n}{18}$. Let $M=18$, we have
$$8n>\frac{15n}{2}>18c>\frac{36n}{5}>7n>\frac{90n}{13}>18b>\frac{27n}{4}>6n,$$
and $|Me|_n+|Mc|_n+|M(n-b)|_n+|M(n-a)|_n=Me+(Mc-7n)+(7n-Mb)+(n-Ma)=n$, then $\ind(S)=1$.

{\bf Case 4.}  $t=2$.

It holds that $8<\frac{3n}{c}<\frac{3n}{b}\leq9$, and we infer that $a<\frac{n}{18}$. Since
\beqs \frac{4n}{c}\leq 11<\frac{4n}{b}, \frac{5n}{c}\leq 14<\frac{5n}{b}, \frac{6n}{c}\leq 17<\frac{6n}{b},\eeqs
we infer that $n=7\times11\times17$.

If $18<\frac{56}{3}<\frac{7n}{c}<19<\frac{7n}{b}\leq20$, we have $a<\frac{n}{30}$, let $m=19$ and $k=7$. Then $\gcd(n,m)=1$ and $\ind(S)=1$.

If $19<\frac{7n}{c}<20<\frac{7n}{b}\leq21$, we have $a<\frac{n}{21}$, let $m=20$ and $k=7$. Then $\gcd(n,m)=1$ and $\ind(S)=1$.

If $\frac{56}{3}<\frac{7n}{c}<19<20<\frac{7n}{b}\leq21$, we infer that $19a>n$. If $27c<10n$, then
$a<\frac{4}{3}(c-b)<\frac{4}{3}(\frac{10n}{27}-\frac{n}{3})<\frac{n}{20}$, a contradiction. So
$\frac{3n}{8}>c>\frac{10n}{27}>\frac{7n}{20}>b>\frac{n}{3}$ and $\frac{81n}{8}>27c>10n>\frac{189n}{20}>27b>9n$, $n<27a<\frac{3n}{2}$. Let $M=27$, we have $|Mc|_n>\frac{n}{2}$, $|M(n-b)|_n>\frac{n}{2}$, $|M(n-a)|_n>\frac{n}{2}$, and $\ind(S)=1$.

{\bf Case 5.}  $t=3$.

It holds that $9<\frac{3n}{c}<\frac{3n}{b}\leq10$, and we infer that $a<\frac{n}{22}$ and $\frac{5n}{c}<16<\frac{5n}{b}$. Let $m=16$ and $k=5$, then $\ind(S)=1$.

{\bf Case 6.}  $t=4$.

It holds that $10<\frac{3n}{c}<\frac{3n}{b}\leq11$, and we infer that $a<\frac{n}{27}$.
If $\frac{5n}{c}<18<\frac{5n}{b}$, let $m=18$ and $k=5$, then we have done. If $\frac{7n}{c}<24<\frac{7n}{b}$, let $m=24$ and $k=7$, then we have done. Otherwise, we have $\frac{5n}{18}<b<c<\frac{7n}{24}$ and $a<\frac{4}{3}(c-b)<\frac{n}{54}$. Then $b>\frac{5n}{18}>\frac{5\times54a}{18}=15a$, which contradicts to $s\leq 9$.

{\bf Case 7.}  $t=5$.

It holds that $11<\frac{3n}{c}<\frac{3n}{b}\leq12$, and we infer that $a<\frac{n}{33}$ and \beqs \frac{4n}{c}\leq15<\frac{5n}{b}, \frac{5n}{c}\leq19<\frac{5n}{b}, \frac{6n}{c}\leq23<\frac{6n}{b}.\eeqs
If $\gcd(n,17)=1$, let $m=17$ and $k=4$, if $\gcd(n,21)=1$, let $m=21$ and $k=5$, if $\gcd(n,25)=1$, let $m=25$ and $k=6$, then $ma<n$ and $\ind(S)=1$. If none of the three integers is co-prime to $n$, then there exists an integer $m\in[26, 27]$ belongs to $[\frac{7n}{c},\frac{7n}{b})$, let $k=7$, then $\gcd(m,n)=1$ and $\ind(S)=1$.

{\bf Case 8.}  $t=6$.

It holds that $12<\frac{3n}{c}<\frac{3n}{b}\leq13$, then we infer that $a<\frac{n}{39}$ and \beqs \frac{4n}{c}\leq17<\frac{5n}{b}, \frac{5n}{c}\leq21<\frac{5n}{b}, \frac{6n}{c}\leq25<\frac{6n}{b}.\eeqs
If $\gcd(n,17)=1$, let $m=17$ and $k=4$, if $\gcd(n,21)=1$, let $m=21$ and $k=5$, if $\gcd(n,25)=1$, let $m=25$ and $k=6$, then $ma<n$ and $\ind(S)=1$. If none of the three integers is co-prime to $n$, then $n=5\times7\times17=595<1000$, a contradiction.
\end{proof}

\begin{lemma}If the assumption is as in Proposition 2.16 and $k_1=3$, then $\ind(S)=1$.\end{lemma}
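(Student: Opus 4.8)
The plan is to follow the template already established in Lemmas 5.2--5.5, adapting the casework to $k_1=3$. First I would pin down the size of $c$. Since $k_1=3$ forces $\lceil\frac{2n}{c}\rceil=\lceil\frac{2n}{b}\rceil$, the two quantities lie in a common unit interval, so $\frac{2n}{b}-\frac{2n}{c}=\frac{2n(a-e)}{bc}<1$. Using $a-e>\frac34a$ (from $a>4e$) together with $b<10a$ (from $s\le9$, Lemma 2.9), a contradiction arises as soon as $c\le\frac{3n}{20}$, since then $\frac{2n(a-e)}{bc}>\frac{3a}{2b}\cdot\frac{n}{c}\ge\frac{10a}{b}>1$. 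Hence $\frac{n}{c}<\frac{20}{3}$, and combining this with $\frac{n}{c}>2$ (which holds because $c<\frac n2$) gives $4<\frac{2n}{c}<\frac{40}{3}$.

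Next I would split into the unit-interval cases dictated by $\lceil\frac{2n}{c}\rceil=\lceil\frac{2n}{b}\rceil$, namely $4+t<\frac{2n}{c}<\frac{2n}{b}\le5+t$ for $t\in[0,8]$, together with the boundary piece $13<\frac{2n}{c}<\frac{2n}{b}\le\frac{40}{3}$. The boundary piece is immediate: multiplying through by $3$ gives $39<\frac{6n}{c}<\frac{6n}{b}\le40$, so $\lceil\frac{6n}{c}\rceil=\lceil\frac{6n}{b}\rceil$, which extends the ceiling-matching to index $6$ and contradicts the \emph{maximality} of $k_1=3$. Thus only the nine windows $t=0,\dots,8$ remain, which is more than in Lemmas 5.2--5.5 precisely because $k_1-1=2$ is the smallest index treated in this section.

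In each window I would proceed exactly as before. From $c<\frac{2n}{4+t}$ and $b\ge\frac{2n}{5+t}$ I obtain $a<\frac43(c-b)<\frac{8n}{3(4+t)(5+t)}$, a bound that tightens as $t$ grows. I then scan the intervals $[\frac{kn}{c},\frac{kn}{b})$ for $k=3,4,5,\dots$, each of which is trapped in an explicit window of a few consecutive integers. Because $n=p_1p_2p_3$ has only three prime divisors, among any three or four candidate integers at least one is coprime to $n$; whenever such an $m$ also satisfies $ma<n$ (which the bound on $a$ supplies for the relevant $k$), Lemma 2.2(1) yields $\ind(S)=1$. When every coprime candidate fails $ma<n$, I switch to the sign-counting criterion Lemma 2.2(2): I choose an explicit multiplier $M$ (a small multiple attached to the case, of the shape already used above) and verify that at least two of $|Ma|_n>\frac n2$, $|Mb|_n>\frac n2$, $|Mc|_n<\frac n2$ hold by locating $Ma$, $Mb$, $Mc$ between consecutive multiples of $n$.

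The residual configurations that escape both criteria force $n$ to be a specific product of three small primes; these I would eliminate using $n>1000$, the renumbering constraints $e\in\{p_1,p_2,2p_2\}$ with $a>4e$, and the size restrictions $s\le9$ (Lemma 2.9) together with $b>\frac n4$ when $s\ge6$ (Lemma 2.7). The main obstacle is exactly this finite elimination in the larger-$t$ windows, where the intervals $[\frac{kn}{c},\frac{kn}{b})$ are narrowest and repeatedly miss a coprime integer with $ma<n$; there the delicate work is the explicit choice of $M$ in Lemma 2.2(2) and the case-by-case exclusion of the surviving small-prime products. The argument is therefore longer than for $k_1=4,5,6,7$ but structurally identical.
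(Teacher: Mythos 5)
Your opening reduction coincides exactly with the paper's: from $\lceil\frac{2n}{c}\rceil=\lceil\frac{2n}{b}\rceil$ you get $\frac{2n(a-e)}{bc}<1$, which with $a-e>\frac34 a$ and $\frac{b}{a}<10$ forces $\frac{n}{c}<\frac{20}{3}$; this yields the nine windows $4+t<\frac{2n}{c}<\frac{2n}{b}\leq 5+t$, $t\in[0,8]$, plus the boundary piece $13<\frac{2n}{c}<\frac{2n}{b}\leq\frac{40}{3}$, and the boundary piece is killed exactly as in the paper by passing to $39<\frac{6n}{c}<\frac{6n}{b}\leq 40$ and invoking the maximality of $k_1$. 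Up to this point you are reproducing the paper's proof correctly.

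The gap is that everything after this point --- which is the entire substance of the lemma --- is left as a program rather than carried out, and the program's success is not guaranteed by the general principles you invoke. In particular, your coprimality claim is wrong for three candidates: three integers can each be divisible by a \emph{different} one of $p_1,p_2,p_3$, and this failure mode is precisely what generates the hard subcases. For example, in the window $t=2$ the paper has candidates $10\in[\frac{3n}{c},\frac{3n}{b})$, $13\in[\frac{4n}{c},\frac{4n}{b})$ and $17$ near $\frac{5n}{c}$, all of which can be non-coprime to $n$, forcing $n=5\times13\times17$; disposing of this case requires the specific multiplier $M=27$ together with the verification $7n<27b<8n<27c<9n$ (drawn from $c>\frac{3n}{10}$, $\frac{5n}{17}\leq c<\frac{5n}{16}$, $\frac{2n}{7}\leq b<\frac{5n}{17}$) so that the four residues $|Me|_n+|Mc|_n+|M(n-b)|_n+|M(n-a)|_n$ sum to a multiple of $n$. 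Similarly, in the window $t=4$ (the paper's Subcase 6.3) the working multiplier is $M=33$, and it succeeds only after the side argument that $66a<n$ would force $b>15a$, contradicting $s\leq 9$, so that $\frac{n}{2}<33a<n$. These choices of $k$, $m$ and $M$, and the elimination of the exceptional prime products, are window-specific and do not follow from the template; until all nine windows are actually worked through (as the paper does in its Cases 2--10, several with multiple subcases), the statement is not proved. A minor additional correction: the number of windows here is governed by the bound $\frac{n}{c}<\frac{20}{3}$, not by $k_1-1=2$ being the smallest index treated in this section, since the case $k_1=2$ is handled separately in Lemma 5.9.
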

\begin{proof}
Similar to Lemma 5.2, we have $\frac{n}{c}<\frac{20}{3}$,  then $4+t<\frac{2n}{c}<\frac{2n}{b}\leq5+t$ for some $t\in[0,8]$ or $13<\frac{2n}{c}<\frac{2n}{b}\leq\frac{40}{3}$. We distinguish ten cases.

{\bf Case 1.}  It holds that $13<\frac{2n}{c}<\frac{2n}{b}\leq\frac{40}{3}$.
We have $39<\frac{6n}{c}<\frac{6n}{b}\leq40$, which contradicts to the maximality of $k_1$.

{\bf Case 2.}  $t=0$.

It holds that $4<\frac{2n}{c}<\frac{2n}{b}\leq5$. We infer that $7a<n$, $6<\frac{3n}{c}\leq 7<\frac{3n}{b}\leq\frac{15}{2}$, and $8<\frac{4n}{c}<9<\frac{4n}{b}\leq10$.  If $9a<n$, let $m=9$ and $k=4$, then we have done. If $9a>n$, let $M=18$, then $7n<\frac{36n}{5}<18b<\frac{54n}{7}<8n<18c<9n$, and $18e<5a<n$. Then $|Me|_n+|Mc|_n+|M(n-b)|_n+|M(n-a)|_n=3n$, and $\ind(S)=1$.

{\bf Case 3.}  $t=1$.

It holds that $5<\frac{2n}{c}<\frac{2n}{b}\leq6$.  We infer that $11a<n$ and $\frac{3n}{c}<8<\frac{3n}{b}$. Let $m=8$ and $k=3$, then $\ind(S)=1$.

{\bf Case 4.}  $t=2$.

It holds that $6<\frac{2n}{c}<\frac{2n}{b}\leq7$.  We infer that $15a<n$ and
\beqs \frac{3n}{c}<10<\frac{3n}{b}, \frac{4n}{c}<13<\frac{4n}{b},\eeqs
thus $\gcd(n,10)>1, \gcd(n,13)>1$.

{\it Subcase 4.1.} $\frac{5n}{c}<16<\frac{5n}{b}\leq\frac{35}{2}$. If $16a<n$, let $m=16$ and $k=5$, then we have done. If $16a>n$, let $M=18$, we have  $5n<18b<18c<6n$ and $|Me|_n+|Mc|_n+|M(n-b)|_n+|M(n-a)|_n=3n$. Then $\ind(S)=1$.

{\it Subcase 4.2.} $16<\frac{5n}{c}\leq17<\frac{5n}{b}\leq\frac{35}{2}$. We infer that $28a<n$. Then $\gcd(n,17)>1$ and $n=5\times13\times17$. Let $M=27$, we have $$9n>\frac{135n}{16}>27c>\frac{81n}{10}>8n=\frac{136n}{17}>\frac{135n}{17}>27b>\frac{54n}{7}>7n,$$
then $|Me|_n+|Mc|_n+|M(n-b)|_n+|M(n-a)|_n=3n$ and $\ind(S)=1$.

{\bf Case 5.}  $t=3$.

It holds that $7<\frac{2n}{c}<\frac{2n}{b}\leq8$.  We infer that $21a<n$ and
\beqs \frac{3n}{c}<11<\frac{3n}{b}, \frac{4n}{c}<15<\frac{4n}{b},\eeqs
thus $\gcd(n,11)>1, \gcd(n,15)>1$.

{\it Subcase 5.1.} $\frac{5n}{c}<18<\frac{5n}{b}\leq20$. Let $m=18$ and $k=5$, then we have done.

{\it Subcase 5.2.} $18<\frac{5n}{c}\leq19<\frac{5n}{b}\leq20$. We infer that $27a<n$ and thus $\gcd(n,19)>1$. In further, $\frac{7n}{c}\leq\frac{77}{3}<26<\frac{133}{5}<\frac{7n}{b}$, let $m=26$ and $k=7$, then $\gcd(n,26)=1$ and $\ind(S)=1$.

{\bf Case 6.}  $t=4$.

It holds that $8<\frac{2n}{c}<\frac{2n}{b}\leq9$.  We infer that $27a<n$ and
\beqs \frac{3n}{c}<13<\frac{3n}{b}, \frac{4n}{c}<17<\frac{4n}{b},\eeqs
thus $\gcd(n,13)>1, \gcd(n,17)>1$.

{\it Subcase 6.1.} $\frac{5n}{c}\leq21<22<\frac{5n}{b}$. Let $k=5$ and $m\in[21,22]$ such that $\gcd(n,m)=1$, then we have done.

{\it Subcase 6.2.} $21<\frac{5n}{c}\leq22<\frac{5n}{b}\leq\frac{45}{2}$. We infer that $a<\frac{4n}{189}$ and thus $\frac{b}{a}>\frac{2}{9}\times\frac{189}{4}=\frac{21}{2}>10$, which contradicts to $s\leq9$.

{\it Subcase 6.3.} $20<\frac{5n}{c}\leq21<\frac{5n}{b}\leq22$. We infer that $33a<n$, $7|n$ and thus $\gcd(n,33)=1$.
Let $M=33$, we have $\frac{15n}{2}<33b<\frac{99n}{13}$, $\frac{n}{2}<33a<n$(otherwise, $66a<n<\frac{22b}{5}$, thus $b>15a$, a contradiction) and $33e<9a<n$. Then $|Me|_n<\frac{n}{2}$,  $|M(n-b)|_n<\frac{n}{2}$,  $|M(n-a)|_n<\frac{n}{2}$, and $\ind(S)=1$.

{\bf Case 7.}  $t=5$.

It holds that $9<\frac{2n}{c}<\frac{2n}{b}\leq10$.  We infer that $33a<n$. If $\frac{5n}{c}<24<\frac{5n}{b}$. Let $k=5$ and $m=24$, then $\gcd(n,m)=1$ and $\ind(S)=1$. Otherwise, assume that $\frac{45}{2}<\frac{5n}{c}\leq23<\frac{5n}{b}<24$. We infer that $54a<n$ and $\frac{b}{a}>\frac{5}{24}\times54=\frac{90}{8}>10$, which contradicts to $s\leq9$.

{\bf Case 8.}  $t=6$.

It holds that $10<\frac{2n}{c}<\frac{2n}{b}\leq11$.  We infer that $41a<n$ and $\frac{3n}{c}<16<\frac{3n}{b}$. Let $k=3$ and $m=16$, then $\gcd(n,m)=1$ and $\ind(S)=1$.

{\bf Case 9.}  $t=7$.

It holds that $11<\frac{2n}{c}<\frac{2n}{b}\leq12$.  We infer that $49a<n$ and
\beqs \frac{3n}{c}<17<\frac{3n}{b}, \frac{4n}{c}<23<\frac{4n}{b},\eeqs
thus $\gcd(n,17)>1, \gcd(n,23)>1$.

{\it Subcase 9.1.} $\frac{5n}{c}\leq28<29<\frac{5n}{b}$. Let $k=5$ and $m\in[28,29]$ such that $\gcd(n,m)=1$, then we have done.

{\it Subcase 9.2.} $28<\frac{5n}{c}\leq29<\frac{5n}{b}\leq30$. We infer that $a<\frac{n}{63}$ and thus $\frac{b}{a}>\frac{21}{2}>10$, which contradicts to $s\leq9$.

{\it Subcase 9.3.} $\frac{55}{2}<\frac{5n}{c}\leq28<\frac{5n}{b}\leq29$. Similar to {\it Subcase 9.2.}, we get a contradiction.

{\bf Case 10.}  $t=8$.

It holds that $12<\frac{2n}{c}<\frac{2n}{b}\leq13$, we infer that $a<\frac{n}{58}$.
If $\frac{5n}{c}<32<\frac{5n}{b}$, let $m=32$ and  $k=5$, then we have done. Otherwise, we have
$30<\frac{5n}{c}<\frac{5n}{b}<32$ and $a<\frac{4}{3}(a-e)=\frac{4}{3}(c-b)<\frac{4}{3}(\frac{n}{6}-\frac{5n}{32})=\frac{n}{72}$. Then
$b>\frac{5n}{32}>\frac{5\times72a}{32}=\frac{45a}{4}>10a$, which contradicts to $s\leq9$.
\end{proof}

\begin{lemma}
Suppose that $p_3=5$, $p_1\geq 13$ and $6a<n$. If $e=2p_2$,  then $p_1\geq23$ and $n>57e$. If $a\leq p_1$, then  $n>85e$.
\end{lemma}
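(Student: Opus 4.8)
The plan is to reduce everything to elementary inequalities between the two unknown primes. Since $p_3=5$ we have $n=5p_1p_2$, and the multiset $\{\gcd(n,e),\gcd(n,c),\gcd(n,b),\gcd(n,a)\}$ is a permutation of $\{p_1,p_2,5p_1,5p_2\}$. Throughout I keep the normalization inherited from the renumbering step: $e\in\{p_1,p_2,2p_2\}$, $e<a\le b<c$, $e+c=a+b$, $a>4e$, and $a\ge 10e$ whenever $e\in\{p_2,2p_2\}$; recall also $p_1<p_2$ and $p_1\ge 13$, so that $e\ge p_1$ in every case.

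\emph{First assertion.} Assume $e=2p_2$. The renumbering gives $a\ge 10e=20p_2$, so the hypothesis $6a<n$ yields $n>6a\ge 60e$. As $60>57$ this already gives $n>57e$, and writing $n>57e$ as $5p_1p_2>57\cdot 2p_2=114p_2$ we get $5p_1>114$, hence $p_1>22.8$ and $p_1\ge 23$; in fact $n>60e$ forces the sharper $p_1\ge 29$. This uses only the size information, not the detailed divisibility structure.

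\emph{Second assertion.} Here I would argue by contrapositive, since the factor $85$ cannot come from $a>4e$ and $6a<n$ alone (which give only $n>24e$). With $n=5p_1p_2$ the inequality $n\le 85e$ reads $5p_2\le 85$ if $e=p_1$, $5p_1\le 85$ if $e=p_2$, and $5p_1/2\le 85$ if $e=2p_2$; using $13\le p_1<p_2$ this forces $(p_1,p_2)=(13,17)$ with $e=p_1$, or $p_1\in\{13,17\}$ with $e=p_2$, or $p_1\le 34$ with $e=2p_2$. In every one of these finitely many cases the normalization gives $a>4e\ge 4p_1>p_1$, so $a\le p_1$ is impossible. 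Equivalently, whenever $a\le p_1$ holds we cannot have $n\le 85e$, i.e. $n>85e$, which is the claim.

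The only genuine subtlety is that the constant $85$ is sharp, so the argument must leave no slack at the extremal configuration. Indeed the admissible triple $(p_1,p_2,p_3)=(13,17,5)$ with $e=p_1=13$ gives $n=1105=85e$ exactly, and this is realized by an honest minimal zero-sum sequence (e.g. $a=68$, $b=455$, $c=510$, which satisfy $e+c=a+b$, $a>4e$ and $6a<n$). What makes the lemma go through is precisely that this tight configuration has $a=68>13=p_1$, so it is eliminated by the hypothesis $a\le p_1$, while the uniform bound $a>4e\ge 4p_1>p_1$ disposes of the remaining small triples at once. Thus I expect no real obstacle beyond correctly translating $n>c\,e$ into prime inequalities for each value of $e$, pinning down the short case list, and verifying that $57$ and $85$ are the sharp constants.
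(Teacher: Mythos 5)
Your proof of the first assertion is correct, and since the paper's entire ``proof'' of this lemma is the single sentence ``This result can be checked directly,'' there is no competing argument to compare it with. The case $e=2p_2$ only arises through the renumbering of Lemma 2.13, so the standing assumption $a\geq 10e$ is available; together with $6a<n$ it yields $n>60e>57e$, and $5p_1p_2>120p_2$ gives $p_1\geq 29\geq 23$. That half of your proposal is fine, and even slightly stronger than what is claimed.

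The second assertion is where there is a genuine gap, and it is exactly the step you present as the key simplification: you prove ``$a\leq p_1\Rightarrow n>85e$'' vacuously, observing that $a>4e\geq 4p_1$ makes the hypothesis unsatisfiable. This does establish the printed sentence, but it cannot be the lemma's content. The paper invokes precisely this implication in Subcase 3.1 of the proof of Lemma 5.8, to convert the derived bounds $n<84e$ and $n<56e$ into contradictions; in that subcase $7a>n\geq 85p_1$, so $a>p_1$ holds there just as it does everywhere else, and under your literal reading Lemma 5.7 would say nothing at all in the one place it is cited --- the paper's later argument would collapse. The only coherent conclusion is that ``$a\leq p_1$'' is a misprint or shorthand for a non-vacuous condition (judging from the computations it must support in Lemma 5.8, a condition on the gcd pattern, e.g.\ that $b$ and $c$ carry the divisors $p_1$ and $p_1p_3$, forcing $p_1\mid a-e$), and a real proof has to identify that condition and derive $n>85e$, or an outright contradiction, from the divisibility structure --- which your argument never touches. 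Your own sharpness example $(p_1,p_2,p_3)=(13,17,5)$, $e=13$, $a=68$, $b=455$, $c=510$, $n=1105=85e$ (which is indeed admissible) should have been the warning sign: the constant $85$ is attained, so the second implication has mathematical content only under a correctly identified, satisfiable hypothesis; declaring the hypothesis empty sidesteps, rather than supplies, the verification the paper claims can ``be checked directly.''
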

\begin{proof}
This result can be checked directly.
\end{proof}

\begin{lemma}Let the assumption be as in Proposition 2.16. If $k_1=2$, $4<\frac{2n}{c}\leq5<\frac{2n}{b}\leq6$ and $5|n$, then $\ind(S)=1$.\end{lemma}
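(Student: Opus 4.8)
The plan is to reuse the interval-chasing machinery of Lemmas 5.2--5.6, with the twist that the unique integer lying in $[\frac{2n}{c},\frac{2n}{b})$ is $5$, which is useless here because $5\mid n$. First I would record the estimates forced by the hypothesis: from $4<\frac{2n}{c}\le 5<\frac{2n}{b}\le 6$ we get $\frac{2n}{5}\le c<\frac n2$ and $\frac n3\le b<\frac{2n}{5}$; since $a-e=c-b$ and $a>4e$, $a<\frac43(c-b)<\frac{2n}{9}$; and since $s\ge k_1=2$ gives $b\ge 2a$, we obtain $a\le\frac b2<\frac n5$, hence $5a<n$. Thus $m=5$ would be an ideal multiplier for Lemma 2.2(1) (it lies in $[\frac{2n}{c},\frac{2n}{b})$ and satisfies $5a<n$), so the whole difficulty is to replace it by an integer coprime to $n$.

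The key observation is that, because $\gcd(n,6)=1$, every integer of the form $2^i3^j$ (such as $8,9,16,18,27$) is automatically coprime to $n$, while $7,11,13,\dots$ are coprime unless they divide $n$. I would therefore pass to $k=3$, whose window $[\frac{3n}{c},\frac{3n}{b})$ lies in $(6,9]$ and meets $\{7,8\}$, and when necessary to $k=4$, whose window lies in $(8,12]$ and meets $\{9,11\}$. Subdividing according to the finer position of $\frac{3n}{c}$ and $\frac{3n}{b}$ (e.g.\ whether $b<\frac{3n}{8}$, so that $8$ is in the window, and whether $c\ge\frac{3n}{7}$, so that $7$ is) sharpens the bound on $a$ in each branch; in every branch where the sharpened bound yields $ma<n$ for a coprime $m$ in the window, Lemma 2.2(1) gives $\ind(S)=1$ at once.

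The branches with $ma>n$ split further. When $a>\frac n6$ (which can occur only for $s=2$) the multiplier $M=3$ already works: $\gcd(3,n)=1$, $3\le\frac{n}{2e}$, and $a\in(\frac n6,\frac n5)$, $c\in[\frac{2n}{5},\frac n2)$ give $|3a|_n=3a>\frac n2$ and $|3c|_n=3c-n<\frac n2$, so two of the three inequalities of Lemma 2.2(2) hold and $\ind(S)=1$. When $a\le\frac n6$, i.e.\ $6a<n$, I would fall back on the residue-sum identity: choosing a $3$-smooth $M$ with $Me<n$ and $Ma,Mb,Mc$ in prescribed intervals $(jn,(j+1)n)$, the relation $e+c=a+b$ forces $|Me|_n+|Mc|_n+|M(n-b)|_n+|M(n-a)|_n=3n$ (equivalently $n-M$ gives a representation of norm $n$), whence $\ind(S)=1$. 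To keep $|Me|_n=Me$ small here I must bound $e$, so I first pin down which prime is $5$: as $p_1<p_2$, the equality $5=p_2$ would force $p_1<5$, which is impossible, leaving $5=p_1$ or $5=p_3$. The rigid case $5=p_1$ I would settle by hand from the gcd pattern $\{5,p_2,5p_3,p_2p_3\}$, the relation $e+c=a+b$, and $n>1000$; in the principal case $5=p_3$ (with $p_1\in\{7,11\}$ and the few resulting $n$ dispatched by $n>1000$) Lemma 5.7 applies and yields $n>57e$ or $n>85e$, so that $Me<\frac{27n}{57}<\frac n2$ for the multipliers in use.

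The step I expect to be the main obstacle is the degenerate branches where every admissible small multiplier fails simultaneously: the $3$-smooth candidates give $ma>n$ while $7,11,13$ all divide $n$. Because $5\mid n$ already, such a coincidence pins $n$ to a product of the shape $5\cdot 7\cdot p_2$, and the delicate work is to show that in each such configuration either the constraints $a>4e$ and $2\le s\le 9$ are violated, or $n\le 1000$, or a single explicitly chosen $M$ places all four of $Me,Ma,Mb,Mc$ in the correct intervals. Verifying these boundary inequalities exactly --- rather than merely exhibiting a coprime multiplier --- is where essentially all the effort lies.
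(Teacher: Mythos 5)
Your proposal follows essentially the same route as the paper's proof: both observe that $5$ is the unique (and, since $5\mid n$, useless) integer in $[\frac{2n}{c},\frac{2n}{b})$; both then chase the $k=3$ window and apply Lemma 2.2(1) with $m=8$, $k=3$ whenever $8$ lies in the window and $8a<n$; and both fall back on $3$-smooth multipliers combined with the residue-sum identity, the determination of which of $p_1,p_3$ equals $5$, and Lemma 5.7. Your disposal of the branch $6a>n$ is correct and in fact slightly cleaner than the paper's: from $a\in(\frac{n}{6},\frac{n}{5})$ and $c\in[\frac{2n}{5},\frac{n}{2})$ one gets $|3a|_n=3a>\frac{n}{2}$ and $|3c|_n=3c-n<\frac{n}{2}$, so $M=3$ and Lemma 2.2(2) finish, where the paper instead computes the full sum identity with $M=6$.

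The genuine gap sits exactly where you locate ``essentially all the effort'': the degenerate branches are never carried out, and the one concrete claim you make about them is false. You assert that for $p_3=5$ the subcases $p_1\in\{7,11\}$ yield only ``few resulting $n$ dispatched by $n>1000$''. They are not dispatched: $p_1=7$, $p_3=5$ allows $n=35p_2$ for every prime $p_2\ge29$ (so $n=1015,1085,\dots$), and $p_1=11$, $p_3=5$ allows $n=55p_2$ for every prime $p_2\ge19$ (so $n=1045,1265,\dots$); these are infinitely many admissible moduli, and Lemma 5.7---whose hypothesis is $p_1\ge13$---says nothing about them. The paper must treat them separately (Subcases 3.2 and 3.3): it first shows $e=p_1$ (if $e\in\{p_2,2p_2\}$ then $a\ge10e\ge10p_2=\frac{2n}{p_1}>\frac{n}{6}$, contradicting $6a<n$), extracts $n\ge145e$ resp.\ $n\ge95e$ from $n>1000$, and then reruns the interval analysis of Subcase 3.1 (the four cases $11b\lessgtr4n$, $11c\lessgtr5n$, with the bounds depending on $e$ and $p_3$). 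Note also that your catch-all bucket is not small: it contains, for instance, the configuration $b\ge\frac{3n}{8}$, $c<\frac{3n}{7}$, in which neither $7$ nor $8$ lies in the $k=3$ window and the $k=4$ window contains only the useless integer $10$; and it contains the paper's entire Case 2 ($7\mid n$), a page of inequality juggling with $M=8,9,11,12,18$ against the positions of $11b,11c,18b$ relative to multiples of $n$. So the strategy and toolkit are the right ones, but what you have is an outline rather than a proof, and the only explicit shortcut it offers (for $p_1\in\{7,11\}$) would fail.
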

\begin{proof}
If $n<6a$, $me<2a<n$, $2n<6b<6c<3n$, let $M=6$. Then $|Me|_n+|Mc|_n+|M(n-b)|_n+|M(n-a)|_n\geq Me+(Mc-2n)+(3n-Mb)+(2n-Ma)=3n$, and $\ind(S)=1$. Next we assume that $6a<n$ and  distinguish three cases.

{\bf Case 1.} $7<\frac{3n}{c}\leq8<\frac{3n}{b}\leq9$.

If $8a<n$, let $m=8$ and $k=3$, we have $\ind(S)=1$. If $8a>n$, let $M=9$, we have $3n<9b<9c<4n$, $9e<3a<n$ and $9a<2n$, so $|Me|_n+|Mc|_n+|M(n-b)|_n+|M(n-a)|_n=Me+(Mc-3n)+(4n-Mb)+(2n-Ma)=3n$. Then $\ind(S)=1$.

{\bf Case 2.} $6<\frac{3n}{c}\leq7<\frac{3n}{b}\leq8$.

If $8a>n$, let $M=8$, we have $3n<8b<8c<4n$, $8e<2a<n$ and $8a<2n$, so $|Me|_n+|Mc|_n+|M(n-b)|_n+|M(n-a)|_n=Me+(Mc-3n)+(4n-Mb)+(2n-Ma)=3n$. Then $\ind(S)=1$.

Next assume that $8a<n$, then $7|n$, $\gcd(n,11)=\gcd(n,13)=1$ and $\frac{11n}{2}>11c>\frac{33n}{7}>\frac{22n}{5}>11b>\frac{33n}{8}$.

If $11c<5n$, let $M=12$, we have $\frac{60n}{11}>12c>\frac{36n}{7}>\frac{24n}{5}>12b>\frac{9n}{2}$ and $12e<3a<\frac{n}{2}$. Then $|Me|_n<\frac{n}{2}$, $|Mc|_n<\frac{n}{2}$, $|M(n-b)|_n<\frac{n}{2}$, and $\ind(S)=1$.

If $11c>5n$ and $11a<n$, let $M=11$, then $|Me|_n+|Mc|_n+|M(n-b)|_n+|M(n-a)|_n=Me+(Mc-5n)+(5n-Mb)+(n-Ma)=n$, and  $\ind(S)=1$.

Let $11c>5n$ and $11a>n$. If $18b>7n$, let $M=9$, we have $\frac{9n}{2}>9c>\frac{45n}{11}>\frac{18n}{5}>9b>\frac{7n}{2}$, then $|Me|_n<\frac{n}{2}, |Mc|_n<\frac{n}{2}, |M(n-b)|_n<\frac{n}{2}$, and  $\ind(S)=1$. If $18b<7n$, let $M=18$, we have  $|Me|_n+|Mc|_n+|M(n-b)|_n+|M(n-a)|_n=Me+(Mc-8n)+(7n-Mb)+(2n-Ma)=n$, and  $\ind(S)=1$.

If $11c>5n$ and $11a>n$, let $M=9$, then $\frac{65n}{11}>13c>\frac{39n}{7}>\frac{26n}{5}>9b>\frac{39n}{8}$.
We infer that $|Mc|_n<\frac{n}{2}, |M(n-b)|_n+|M(n-a)|_n=Me+(Mc-4n)+(5n-Mb)+(2n-Ma)=3n$, and  $\ind(S)=1$.

{\bf Case 3.} $6<\frac{3n}{c}\leq7<8<\frac{3n}{b}\leq9$.

If $\frac{27}{4}<\frac{3n}{c}\leq7<8<\frac{3n}{b}\leq9$.
If $8a<n$, let $m=8$ and $k=3$, then $\ind(S)=1$. If $8a>n$, let $M=9$, then $|Me|_n+|Mc|_n+|M(n-b)|_n+|M(n-a)|_n=Me+(Mc-3n)+(4n-Mb)+(2n-Ma)=3n$, and  $\ind(S)=1$.

Next assume that $6<\frac{3n}{c}<\frac{27}{4}<7<8<\frac{3n}{b}\leq9$.

If $p_1=5$, then $n>200e$. We only need repeat the proof of Case 3 of Lemma 3.10 in [11].  Then $p_3=5$.

{\it Subcase 3.1.} $\gcd(n,7)=\gcd(n,11)=1$.

We infer that $7a>n$ and $n\geq85p_1$.

If $11b<4n$ and $11c>5n$, we have $|11e|_n + |11c|_n + |11(n-b)|_n + |11(n-a)|_n =n$ and thus $\ind(S) = 1$.

If $11b>4n$ and $11c<5n$, we have $|11e|_n + |11c|_n + |11(n-b)|_n + |11(n-a)|_n =3n$ and thus $\ind(S) = 1$.

If $11b < 4n$ and $11c < 5n$, then $\frac{n}{7}< a = c-b+e\leq \frac{5n-ep_3}{11}-\frac{n+e}{3} + e \leq\frac{4n+7e}{33}$, so $n <10e$, or $\frac{n}{7}< a = \frac{10}{9}(c-b)<\frac{40n}{297}$, either of them implies a contradiction.

If $11b > 4n$ and $11c > 5n$, then $\frac{n}{7}< a = c-b+e\leq \frac{n-e}{2}-\frac{4n+p_3e}{11} + e \leq\frac{3n+e}{22}$, so $n <7e$, or $\frac{n}{7}< a = c-b+e\leq \frac{n-p_1p_3}{2}-\frac{4n+p_1}{11} + e \leq\frac{3n+12e}{22}$, so $n<84e$, or $\frac{n}{7}< a = c-b+e\leq \frac{n-p_1p_3}{2}-\frac{4n+p_1}{11} + e \leq\frac{3n+8e}{22}$, so $n<56e$. By Lemma 5.7, each of above implies a contradiction.

{\it Subcase 3.2.} $11|n$.  We infer that $8a>n$, $e=11$, and $n\geq95e$.

The proof is similar to {\it Subcase 3.1.}

{\it Subcase 3.3.} $7|n$.  We infer that $8a>n$, $e=7$, and $n\geq145e$.

The proof is similar to {\it Subcase 3.1.}
\end{proof}

\begin{lemma}If the assumption is as in Proposition 2.16 and $k_1=2$, then $\ind(S)=1$.\end{lemma}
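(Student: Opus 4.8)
The plan is to mirror the treatment of the cases $k_1=3,\dots,7$ in Lemmas 5.6--5.2, specialised to $k_1=2$ (so $(k_1-1)=1$ and the governing window is $[\frac{n}{c},\frac{n}{b})$ itself). First I would pin down the size of $\frac{n}{c}$. Since the standing hypothesis of Proposition 2.16 is $\lceil\frac{n}{c}\rceil=\lceil\frac{n}{b}\rceil$, both quantities lie in one unit interval, so $\frac{n}{b}-\frac{n}{c}<1$. Writing $\frac{n}{b}-\frac{n}{c}=\frac{n(c-b)}{bc}=\frac{n(a-e)}{bc}$ and using $a>4e$ (hence $a-e>\frac{3a}{4}$) together with $b<10a$ (from $s=\lfloor b/a\rfloor\le 9$, Lemma 2.9), I obtain
\[
1>\frac{n(a-e)}{bc}>\frac{3n}{40c},
\]
so that $\frac{n}{c}<\frac{40}{3}$. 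Combined with $\frac{n}{c}>2$ (because $c<\frac{n}{2}$), this confines $\frac{n}{c}$ to the finitely many bands $2+t<\frac{n}{c}<\frac{n}{b}\le 3+t$ with $t\in[0,10]$, together with a boundary band $13<\frac{n}{c}<\frac{n}{b}\le\frac{40}{3}$. The boundary band is discarded immediately: it forces $[\frac{2n}{c},\frac{2n}{b})\subset(26,\frac{80}{3})$, which contains no integer, contradicting the very definition of $k_1=2$, which requires an integer in $[\frac{2n}{c},\frac{2n}{b})$.

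For each surviving $t$ the scheme is the one used in the earlier lemmas. Because $k_1=2$, the window $[\frac{2n}{c},\frac{2n}{b})$ already holds an integer, and I would march through the windows $[\frac{jn}{c},\frac{jn}{b})$ for $j=2,3,\dots$, selecting in a suitable window an integer $m$ coprime to $n$ (possible since $n=p_1p_2p_3$ has only three prime divisors). Whenever such $m$ with $ma<n$ is found, I invoke Lemma 2.2(1) with $k=j$ (legitimate since the relevant $j$ stays well below $b$) to conclude $\ind(S)=1$; the inequality $ma<n$ is verified from the case bounds exactly as in Lemmas 5.3--5.6, using $a<\frac{4}{3}(c-b)$ and the upper estimate on $\frac{n}{c}$. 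The degenerate sub-cases are those in which every admissible multiplier in the windows under consideration shares a prime with $n$; the resulting divisibilities pin $n=p_1p_2p_3$ down to a short explicit list of small products, each eliminated either by $n>1000$, by $s\le 9$ (a multiplier forcing $b>10a$), or by $a>4e$. When no coprime $m$ with $ma<n$ is available but the residues align, I instead produce an explicit multiplier $M$ and check $|Me|_n+|Mc|_n+|M(n-b)|_n+|M(n-a)|_n\in\{n,3n\}$, applying Lemma 2.2(2); the identity $e+c=a+b$ is precisely what makes these residue sums collapse to $n$ or $3n$.

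The one genuinely separate sub-case is $t=0$, i.e.\ $4<\frac{2n}{c}\le 5<\frac{2n}{b}\le 6$, where the only integer in $[\frac{2n}{c},\frac{2n}{b})$ is $5$ and $\frac{n}{c}$ is so close to $2$ that $5a<n$ may fail. Here, if $5\nmid n$, I take $m=5,\ k=2$ when $5a<n$, and otherwise $M=6$ (noting $\gcd(6,n)=1$, $2n<6b<6c<3n$, $n<6a<2n$, $6e<n$), for which $e+c=a+b$ yields residue sum $3n$; if $5\mid n$ the conclusion is exactly Lemma 5.8. The main obstacle I anticipate is precisely this small-$t$ regime: there $\frac{n}{c}$ is near its minimum, $c$ is close to $\frac{n}{2}$, and only a handful of multipliers are available before $ma$ exceeds $n$, so the argument cannot rely on ``a coprime integer in a long window'' and must lean on the delicate explicit $M$-constructions and on the arithmetic bookkeeping already isolated in Lemmas 5.7 and 5.8. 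Bounding and enumerating the degenerate triples $(p_1,p_2,p_3)$ compatible with $a>4e$, $s\le 9$ and $n>1000$ is where the bulk of the casework will sit.
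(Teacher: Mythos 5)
Your framework coincides with the paper's: confine $\frac{n}{c}$ to unit bands, and dispatch each band either by exhibiting an integer $m$ coprime to $n$ in some window $[\frac{jn}{c},\frac{jn}{b})$ with $ma<n$ (Lemma 2.2(1)), or by an explicit multiplier $M$ whose residue sum $|Me|_n+|Mc|_n+|M(n-b)|_n+|M(n-a)|_n$ collapses to $n$ or $3n$ via $e+c=a+b$. Your treatment of the boundary band and of $t=0$ (take $m=5$, $k=2$ if $5\nmid n$ and $5a<n$; take $M=6$ if $5a\ge n$; fall back on Lemma 5.8 if $5\mid n$) is exactly the paper's Case 1 and is correct. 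Two smaller points: under the standing assumption (B), Lemma 2.5 already gives $\frac{n}{2b}<4$, i.e.\ $\frac{n}{b}<8$, so only $t\in[0,5]$ can occur; your weaker bound $\frac{n}{c}<\frac{40}{3}$ leaves you eleven bands instead of six. Also, the residue-sum criterion you invoke is condition (1) of the ``Notice'' following Proposition 2.1, not Lemma 2.2(2), which is the ``at most one residue on one side of $\frac{n}{2}$'' test.

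The genuine gap is that for every band $t\ge 1$ you have described a strategy rather than executed one, and what you defer is precisely where the proof lives. In particular, your claim that the degenerate subcases are ``eliminated either by $n>1000$, by $s\le 9$, or by $a>4e$'' is not true as stated: in the paper's proof several such configurations survive all three tests and are resolved only by bespoke constructions whose correctness is not automatic. For example, at $t=1$ the $k_1$-window forces $7\mid n$; the subcase $9<\frac{3n}{c}\le 10<\frac{3n}{b}\le 11$ then forces $5\mid n$ and $13a>n$, and is settled by $M=22$ after splitting on whether $22c<7n$ or $22c>7n$; the subcase $10<\frac{3n}{c}\le 11<\frac{3n}{b}\le 12$ forces $11\mid n$ and needs $M=25$ together with the chain $\frac{100n}{15}<25b<\frac{75n}{11}<7n<\frac{50n}{7}<25c<\frac{15n}{2}$. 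At $t=4$ and $t=5$ the analysis pins down $n=5\cdot 13\cdot 19$ and $n=5\cdot 11\cdot 23$ respectively, and a contradiction with $s\le 9$ emerges only after further refinement of the windows. None of this follows ``exactly as in Lemmas 5.3--5.6''; each band requires its own multipliers and its own verification that $ma<n$ or that the residues land in the right subintervals of $[0,n)$. Until that casework — roughly two pages in the paper, plus the five extra bands your weaker restriction leaves open — is written down and checked, what you have is a correct outline of the paper's argument, not a proof of the lemma.
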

\begin{proof}
By Lemma 2.5, it holds that $2+t<\frac{n}{c}<\frac{n}{b}\leq3+t$ for some $t\in[0,5]$. We distinguish six cases.

{\bf Case 1.}  $t=0$. Then $2<\frac{n}{c}<\frac{n}{b}\leq3$ and $4<\frac{2n}{c}\leq5<\frac{2n}{b}\leq6$.

Similar to Lemma 5.7, we infer that $6a<n$ and hence $5|n$, by Lemma 5.8, $\ind(S)=1$.

{\bf Case 2.}  $t=1$. Then $3<\frac{n}{c}<\frac{n}{b}\leq4$. We infer that $a<\frac{n}{9}$ and $6<\frac{2n}{c}\leq7<\frac{2n}{b}\leq8$. Thus $7|n$.

If $9<\frac{3n}{c}\leq10<11<\frac{3n}{b}\leq12$, let $m\in[10,11]$(since $n$ can't be $5\times7\times11=385$) such that $\gcd(n,m)=1$. If  and $ma<n$, let $k=3$, then $\ind(S)=1$. If $ma>n$, let $M=12$, we have $Me<4a<n$, $n<Ma<2n$ and $3n<Mb<Mc<4n$, then $|Me|_n+|Mc|_n+|M(n-b)|_n+|M(n-a)|_n=Me+(Mc-3n)+(4n-Mb)+(2n-Ma)=3n$, and $\ind(S)=1$.

If $9<\frac{3n}{c}\leq10<\frac{3n}{b}\leq11$, we infer that $a<\frac{n}{12}$ and $5|n$. If $12<\frac{4n}{c}\leq13<\frac{4n}{b}\leq\frac{44}{3}$, we infer that $13a>n$. Otherwise, let $m=13$ and $k=4$, we have $\gcd(n,13)=1$(otherwise, $n=5\times7\times13=455<1000$), then $\ind(S)=1$. Let $M=22$, it is easy to check that $\gcd(n,M)=1$. If $Mc<7n$, we have $Me<n$, $n<Ma<2n$ and $6n<Mb<Mc<7n$, and $|Me|_n+|Mc|_n+|M(n-b)|_n+|M(n-a)|_n=Me+(Mc-6n)+(7n-Mb)+(2n-Ma)=3n$, thus $\ind(S)=1$. If $Mc>7n$, we have $|Me|_n<\frac{n}{2}$, $|Mc|_n<\frac{n}{2}$ and $|M(n-a)|_n<\frac{n}{2}$, then $\ind(S)=1$.

If $10<\frac{3n}{c}\leq11<\frac{3n}{b}\leq12$, we infer that $a<\frac{n}{15}$ and $11|n$. If $\frac{4n}{c}\leq 15<\frac{4n}{b}$, then $\gcd(15,n)=1$ and $15a<n$. Let $m=15$ and $k=4$, we have $\ind(S)=1$.
So $13<\frac{40}{3}<\frac{4n}{c}\leq14<\frac{4n}{b}\leq15$, and we infer that $a<\frac{n}{22}$. Let $M=25$, we have $\gcd(n,M)=1$ and $$6n+\frac{2n}{3}=\frac{100n}{15}<Mb<\frac{75n}{11}=6n+\frac{9n}{11}<7n<7n+\frac{n}{7}=\frac{50n}{7}<Mc<\frac{15n}{2}=7n+\frac{n}{2}.$$ Then $|Me|_n<\frac{n}{2}$, $|Mc|_n<\frac{n}{2}$ and $|M(n-b)|_n<\frac{n}{2}$, thus $\ind(S)=1$.

{\bf Case 3.}  $t=2$. Then $4<\frac{n}{c}<\frac{n}{b}\leq5$. We infer that $a<\frac{n}{15}$ and $8<\frac{n}{c}<9<\frac{n}{b}\leq10$. Let $m=9$ and $k=2$, then $\ind(S)=1$.

{\bf Case 4.}  $t=3$. Then $5<\frac{n}{c}<\frac{n}{b}\leq6$, $10<\frac{2n}{c}\leq 11<\frac{2n}{b}\leq12$ and $a<\frac{n}{22}$. If $\gcd(n,11)=1$, let $m=11$ and $k=2$, we have $\ind(S)=1$. If $\frac{3n}{c}<16<\frac{3n}{b}$,  let $m=16$ and $k=3$, then $\ind(S)=1$. Otherwise, we have
$a<\frac{4}{3}(a-e)=\frac{4}{3}(c-b)<\frac{4}{3}(\frac{3n}{16}-\frac{n}{6})=\frac{n}{36}$, and $16<\frac{3n}{c}\leq17<\frac{3n}{b}<18$. Similarly, $\gcd(n,17)>1$ and $27<\frac{5n}{c}<\frac{5n}{b}\leq30$.

If $27<\frac{5n}{c}\leq28<\frac{5n}{b}\leq29$, we have $a<\frac{4\times2\times5n}{3\times27\times29}$, and
$b>\frac{5n}{29}>\frac{5}{29}\frac{3\times27\times29a}{4\times2\times5}=\frac{81a}{8}>10a$, which contradicts to $s\leq9$.

If $28<\frac{5n}{c}\leq29<\frac{5n}{b}\leq30$, we have $a<\frac{4\times2\times5n}{3\times28\times30}$, and
$b>\frac{n}{6}>\frac{1}{6}\frac{3\times28\times30a}{4\times2\times5}=\frac{21a}{2}>10a$, which contradicts to $s\leq9$.

If $27<\frac{5n}{c}\leq28<29<\frac{5n}{b}\leq30$, then there exists $m$ between $\frac{5n}{c}$ and $\frac{5n}{b}$ ($m=28$ or $m=29$) such that $\gcd(n,m)=1$. Let $k=5$, we have $\ind(S)=1$.

{\bf Case 5.}  $t=4$. Then $6<\frac{n}{c}<\frac{n}{b}\leq7$. We infer that $12<\frac{2n}{c}\leq 13<\frac{2n}{b}\leq14$, $a<\frac{n}{31}$ and thus $\gcd(n,13)>1$.

If $18<\frac{3n}{c}\leq 19<20<\frac{3n}{b}\leq21$, we infer that $n=5\times13\times19$. If $\frac{5n}{c}\leq 31<\frac{5n}{b}$, let $m=31$ and $k=5$, we have $\gcd(n,m)=1$, then $\ind(S)=1$. Otherwise, $31<\frac{5n}{c}<\frac{5n}{b}\leq35$, hence $a<\frac{4}{3}\times(\frac{5n}{31}-\frac{5n}{35})=\frac{n}{40}$. Then, for any integer $m$ between $\frac{5n}{c}$ and $\frac{5n}{b}$,  we have $\gcd(n,m)=1$ and $ma<n$, hence $\ind(S)=1$.

If $18<\frac{3n}{c}\leq 19<\frac{3n}{b}\leq20$, we infer that $a<\frac{n}{45}$. If $\frac{5n}{c}<32<\frac{5n}{b}$, let $m=32$ and $k=5$, then we have done. Otherwise, $30<\frac{5n}{c}\leq\frac{95}{3}<\frac{5n}{b}<32$, thus we infer that $a<\frac{n}{72}$ and $\frac{b}{a}>\frac{5\times72}{32}=\frac{45}{4}>10$, which contradicts to $s\leq9$.

If $19<\frac{3n}{c}\leq 20<\frac{3n}{b}\leq21$, we infer that $a<\frac{n}{49}$ and $\gcd(n,20)>1$. If $27<\frac{4n}{b}$, then  $\frac{4n}{c}\leq \frac{80}{3}<27<\frac{4n}{b}$, let $m=27$ and $k=4$, we have $\ind(S)=1$. Otherwise, $\frac{76}{3}\frac{4n}{c}<\frac{4n}{b}<27$, we infer that $a<\frac{4\times5n}{19\times81}$, and $\frac{b}{a}>\frac{4}{27}\times\frac{19\times81}{20}=\frac{57}{5}>10$, which contradicts to $s\leq9$.

{\bf Case 6.}  $t=5$.  Then $7<\frac{n}{c}<\frac{n}{b}\leq8$. We infer that $14<\frac{2n}{c}\leq 15<\frac{2n}{b}\leq16$, $a<\frac{n}{42}$ and thus $\gcd(n,15)>1$.

If $21<\frac{3n}{c}\leq 22<23<\frac{3n}{b}\leq24$, we infer that $n=5\times11\times23$. If $29$ or $31$ belongs to $[\frac{4n}{c},\frac{4n}{b})$, let it be $m$ and $k=4$, we have $\gcd(n,m)=1$, then $\ind(S)=1$. Otherwise, $29<\frac{4n}{c}\leq 30<\frac{4n}{b}\leq31$, hence $a<\frac{4}{3}\times(\frac{4n}{29}-\frac{4n}{31})=\frac{32n}{3\times29\times31}$, and $\frac{b}{a}>\frac{4}{31}\times\frac{3\times29\times31}{32}=\frac{87}{8}>10$, which contradicts to $s\leq 9$.

If $21<\frac{3n}{c}\leq 22<\frac{3n}{b}\leq23$, we infer that $a<\frac{n}{60}$. If $\frac{5n}{c}<36<\frac{5n}{b}$, let $m=36$ and $k=5$, then we have done. Otherwise, $36<\frac{5n}{c}\leq37<\frac{5n}{b}\leq\frac{115}{3}$, thus we infer that $a<\frac{7n}{23\times27}$ and $\frac{b}{a}>\frac{23\times27}{7}\times\frac{3}{23}=\frac{81}{3}>10$, which contradicts to $s\leq9$.

If $22<\frac{3n}{c}\leq 23<\frac{3n}{b}\leq24$, we infer that $a<\frac{n}{66}$ and $\gcd(n,23)>1$. If $\frac{5n}{c}\leq 37$, then  $\frac{5n}{c}\leq 37<38<\frac{115}{3}<\frac{5n}{b}\leq40$. There exists $m\in[37,38]$ such that $\gcd(m,n)=1$, let $k=5$, then $\ind(S)=1$. Similarly, $\frac{5n}{c}\leq 38<39<\frac{5n}{b}\leq40$ implies $\ind(S)=1$. If $37<\frac{5n}{c}\leq 38<\frac{5n}{b}\leq39$ or $38<\frac{5n}{c}\leq 39<\frac{5n}{b}\leq40$, we infer that $b>10a$, a contradiction.
\end{proof}

{\noindent\bf Acknowledgements}

The second author is thankful to prof. Yuanlin Li and prof. Jiangtao Peng for their useful discussion and valuable comments.

\vskip30pt
\def\refname{\centerline{\bf REFERENCES}}


\begin{thebibliography}{15}
\bibitem[1]{CFS} S.T. Chapman, M. Freeze, and W.W Smith, {\sl Minimal zero sequences and the strong Davenport constant},
Discrete Math. 203(1999), 271-277.
\bibitem[2]{CS}  S.T. Chapman, and W.W Smith, {\sl A characterization of minimal zero-sequences of index one in finite cyclic groups}, Integers 5(1)(2005), Paper A27, 5p.
\bibitem[3]{Gao} W. Gao , {\sl Zero sums in finite cyclic groups}, Integers 0 (2000), Paper A14, 9p.
\bibitem[4]{GaoGer} W. Gao and A. Geroldinger, {\sl On products of k atoms}, Monatsh. Math. 156 (2009), 141-157.
\bibitem[5]{GLPPW} W. Gao, Y. Li, J. Peng, P. Plyley and G. Wang {\sl On the index of sequences over cyclic groups} (English), Acta Arith. 148, No. 2, (2011) 119-134 .
\bibitem[6]{Ger1} A. Geroldinger, {\sl On non-unique factorizations into irreducible elements. II}, Number Theory, Vol II Budapest 1987, Colloquia Mathematica Societatis Janos Bolyai, vol. 51, North Holland, 1990, 723-757.
\bibitem[7]{Ger2} A. Geroldinger, {\sl Additive group theory and non-unique factorizations}, Combinatorial Number Theory and Additive Group Theory (A. Geroldinger and I. Ruzsa, eds.), Advanced Courses in Mathematics CRM Barcelona, Birkh\"{a}user, 2009, pp. 1-86.
\bibitem[8]{GerH} A. Geroldinger and F. Halter-Koch, {\sl Non-Unique Factorizations}. Algebraic, Combinatorial and Analytic Theory, Pure and Applied Mathematics, Vol. 278, Chapman \& Hall/CRC, 2006.
\bibitem[9]{KL} D. Kleitman and P. Lemke, {\sl An addition theorem on the integers modulo n}, J. Number Theory 31(1989),335-345.
\bibitem[10]{LP1} Y. Li and J. Peng, {\sl Minimal zero-sum sequences of length five over finite cyclic groups}, Ars Combinatoria, to appear.
\bibitem[11]{LP2} Y. Li and J. Peng, {\sl Minimal zero-sum sequences of length four over finite cyclic groups II}, International Journal of Number Theory, to appear
\bibitem[12]{LPYZ} Y. Li, C. Plyley, P. Yuan and X. Zeng, {\sl Minimal zero sum sequences of length four over finite cyclic groups}, Journal of Number Theory. 130 (2010), 2033-2048.
\bibitem[13]{P} V. Ponomarenko, {\sl Minimal zero sequences of finite cyclic groups}, Integers 4(2004), Paper A24, 6p.
\bibitem[14]{SC} S. Savchen and F. Chen, {\sl Long zero-free sequences in finite cyclic groups}, Discrete Math. 307(2007), 2671-2679.
\bibitem[15]{XY} X. Xia and P. Yuan, {\sl Indexes of insplitable minimal zero-sum sequences of length $l(C_n)-1$}, Discrete Math., to appear.
\bibitem[16]{Y} P. Yuan, {\sl On the index of minimal zero-sum sequences over finite cyclic groups}, J. Combin. Theory Ser. A114(2007), 1545-1551.
\bibitem[17]{YZ} P. Yuan and X. Zeng, {\sl Indexes of long zero-sum free sequences over cyclic groups}, Eur. J. Comb. 32(2011),
1213-1221.
\bibitem[18]{G} D. J. Grynkiewicz, {\sl Structural Additive Theory, Developments in Mathematics}, to appear, Springer, 2013.
\bibitem[19]{X} L. Xia, {\sl On the index-conjecture of length four minimal zero-sum sequences}, International Journal of Number Theory, to appear.
\end{thebibliography}
\end{document}